\newtheorem{thm}{Theorem}[section]
\newtheorem{prop}[thm]{Proposition}
\newtheorem{lemma}[thm]{Lemma}
\newtheorem{cor}[thm]{Corollary}
\newtheorem{definition}[thm]{Definition}
\newtheorem{remark}[thm]{Remark}
\newtheorem{example}[thm]{Example}
\numberwithin{equation}{section}
\def\cH{\mathcal{H}}
\def\bM{\mathbb{M}}
\def\bN{\mathbb{N}}
\def\bR{\mathbb{R}}
\def\bC{\mathbb{C}}
\def\ffi{\varphi}
\def\diag{\mathrm{Diag}}
\def\cP{\mathcal{P}}
\def\cF{\mathcal{F}}
\def\eps{\varepsilon}
\def\<{\langle}
\def\>{\rangle}
\begin{document}
\baselineskip=16pt

\title[Operator $k$-tone functions]
{Higher order extension of L\"owner's theory: Operator $k$-tone functions}

\author[U. FRanz]{Uwe Franz}
\address{D\'epartement de math\'ematiques de Besan\c con,
Universit\'e de Franche-Comt\'e,
16, route de Gray, 25 030 Besan\c con Cedex, France}
\email{uwe.franz@univ-fcomte.fr}
\author[F. Hiai]{Fumio Hiai}
\address{Graduate School of Information Sciences, Tohoku University,
Aoba-ku, Sendai 980-8579, Japan}
\email{hiai.fumio@gmail.com}
\author[\'E. Ricard]{\'Eric Ricard}
\address{Laboratoire de Math\'ematiques Nicolas Oresme,
Universit\'e de Caen-Basse Normandie,
BP 5186, 14032 Caen Cedex, France}
\email{eric.ricard@unicaen.fr}

\thanks{{\it 2010 Mathematics Subject Classification.}
Primary 47A56, 47A60, 47A63, 15A39}

\thanks{{\it Key Words and Phrases.}
operator monotone function, operator convex function, matrix monotone function,
matrix convex function, divided difference, operator $k$-tone function,
matrix $k$-tone function, absolutely monotone function, completely monotone function.}

\maketitle

\begin{abstract}
The new notion of operator/matrix $k$-tone functions is introduced, which is a higher
order extension of operator/matrix monotone and convex functions. Differential properties
of matrix $k$-tone functions are shown. Characterizations, properties, and examples of
operator $k$-tone functions are presented. In particular, integral representations of
operator $k$-tone functions are given, generalizing familiar representations of operator
monotone and convex functions.
\end{abstract}

\section*{Introduction}

The theory of operator/matrix monotone functions was initiated by the celebrated paper of
L\"owner \cite{Lo}, which was soon followed by Kraus \cite{Kr} on operator/matrix convex
functions. After further developments due to some authors (for instance, Bendat and Sherman
\cite{BS} and Koranyi \cite{Kor}), in their seminal paper \cite{HP} Hansen and Pedersen
established a modern treatment of operator monotone and convex functions. In
\cite{Do,An,Bh} (also \cite{Hi}) are found comprehensive expositions on the subject matter.
A remarkable feature of L\"owner's theory is that we have several characterizations of
operator monotone and convex functions from several different points of view. The
importance of complex analysis in studying operator monotone functions is well understood
from their characterization in terms of analytic continuation as Pick functions. Integral
representations for operator monotone and convex functions are essential ingredients of
the theory from both theoretical and application sides. The notion of divided differences has played a vital role in the theory from its
very beginning. In this connection, the operator/matrix-valued differential calculus, as
represented by Daleckii and Krein's derivative formula \cite{DK}, is quite useful as is
clearly mentioned in the survey paper \cite{Da}. Also, differential methods were adopted
in \cite{An2,HT1,HT2} for some analysis of matrix convex functions. 

The differential calculus of the form
\begin{equation}\label{eq-0.2}
{d^k\over dt^k}\,f(A+tX)\Big|_{t=0}
\end{equation}
is quite relevant to operator/matrix monotone and convex functions, where $A$ is a
self-adjoint operator with spectrum in the domain interval $(a,b)$ of $f$ and $X$ is a
positive operator. The above $k$th derivative exists for matrices whenever $f$ is $C^k$ on
$(a,b)$ (see \cite{BV,Hi}). In the infinite-dimensional setting, the existence of the
$k$th derivative is rather subtle but it exists in the operator norm when $f$ is analytic
(see \cite{Pe} for the $k$th derivative under a weaker assumption). It is well known
\cite{BV} that the operator/matrix monotonicity of $f$ is characterized by the positivity
of derivative \eqref{eq-0.2} for $k=1$, and the operator/matrix convexity is similarly
characterized by the positivity of \eqref{eq-0.2} for $k=2$. Our motivation for the present
paper came from the naive question of what is a higher order extension of the operator
monotonicity related to the higher order derivative given in \eqref{eq-0.2} for $k>2$.
Although the idea seems very natural, this kind of higher order extension of L\"owner's
theory is new while some other types of extensions have been discussed (see \cite{ABDR}
for instance).

In Section \ref{sec-1} of the paper we first define, for a real
function on $(a,b)$, operator/matrix-valued divided differences of $f$
by generalizing the usual divided differences. By using these
generalized divided differences we introduce, for $k,n\in\bN$, the
notions of operator $k$-tone functions and matrix $k$-tone functions
of order $n$, which are higher order extensions of operator/matrix
monotone and convex functions.  In fact, when $k=1$ and $k=2$,
operator $k$-tone functions are operator monotone and convex
functions, respectively. In the last of Section \ref{sec-1}, slightly
refined forms of the standard integral representations of operator
monotone functions on $(-1,1)$ and on $(0,\infty)$ are provided for
later use.  The main theorem of Section \ref{sec-2} gives
differentiability properties of matrix $k$-tone functions of order
$n$; such a function must be of class $C^{2n+2k-4}$. In particular, an
$n$-convex function is $C^{2n-2}$, extending the classical result of
Kraus \cite{Kr} when $n=2$. Section \ref{sec-3} presents
several characterizations of operator $k$-tone functions on $(a,b)$,
e.g., in terms of derivatives or divided differences of $f$. It
turns out that $f$ is operator $k$-tone on $(a,b)$ if and only if
for any $\alpha\in(a,b)$ there exists an operator monotone function
$g$ on $(a,b)$ such that
\begin{equation}\label{eq-0.3}
f(x)=\sum_{l=0}^{k-2}{f^{(l)}(\alpha)\over l!}\,(x-\alpha)^l+(x-\alpha)^{k-1}g(x),
\qquad x\in(a,b),
\end{equation}
that is, the $(k-1)$st remainder term of the Taylor series of $f$ at $\alpha$ is
given as $(x-\alpha)^{k-1}$ times an operator monotone function. This shows that operator
$k$-tone functions have rather simple structure with only additive
and multiplicative polynomial factors beyond operator monotone functions.
Sections \ref{sec-4} and \ref{sec-5} contain further properties of operator
$k$-tone functions on $(-1,1)$ and on $(0,\infty)$, respectively. In particular, we present
integral expressions of such functions, generalizing the well-known versions for operator
monotone functions (when $k=1$). Furthermore, in the last parts of Sections \ref{sec-4}
and \ref{sec-5}, we clarify what are the operator versions of absolutely monotone and
completely monotone functions on $(-1,1)$ and on $(0,\infty)$. Examples of operator
$k$-tone functions on $(0,\infty)$ are provided in Section \ref{sec-6}.
Finally, it is worth noting that the operator $k$-tonicity condition is weaker and weaker
as $k$ is bigger and bigger (see Propositions \ref{P-3.9} and \ref{P-5.2} for precise
statements), unlike the usual differentiability property of numerical functions.

\section{Definitions and preliminaries}\label{sec-1}
\subsection{Notations}
For each $n\in\bN$, $\bM_n$ is the $n\times n$ matrix algebra, $\bM_n^{sa}$ the set of
$n\times n$ Hermitian matrices, and $\bM_n^+$ the set of $n\times n$ positive semidefinite
matrices. Throughout the paper $\cH$ is a fixed infinite-dimensional separable Hilbert
space, $B(\cH)$ is the set of all bounded operators on $\cH$, $B(\cH)^{sa}$ the set of
self-adjoint operators on $\cH$, and $B(\cH)^+$ the set of positive operators on $\cH$.
The symbol $I$ denotes the identity matrix or the identity operator. For an open interval
$(a,b)$ in the real line $\bR$, we write $\bM_n^{sa}(a,b)$ and $B(\cH)^{sa}(a,b)$ for the
sets of all elements of $\bM_n^{sa}$ and of $B(\cH)^{sa}$, respectively, with spectra in
$(a,b)$, which are convex and open (in the norm topology) in $\bM_n^{sa}$ and
$B(\cH)^{sa}$, respectively. In particular, $\bM_n^{sa}(0,\infty)$ is the set of
$n\times n$ invertible positive semidefinite matrices. When $f$ is a real function on
$(a,b)$ and $A\in\bM_n^{sa}(a,b)$, $f(A)$ is the usual functional calculus of $A$ by $f$,
and $f(A)$ for $A\in B(\cH)^{sa}(a,b)$ is similar when $f$ is continuous.

\subsection{Operator/matrix-valued divided differences}
Let $f$ be a real function on an open interval $(a,b)$, where $-\infty\le a<b\le\infty$.
For distinct $x_0,x_1,x_2,\dots$ in $(a,b)$, the {\it divided differences} of $f$ are
recursively defined as
$$
f^{[1]}(x_0,x_1):={f(x_0)-f(x_1)\over x_0-x_1}
$$
and for $k=2,3,\dots$,
$$
f^{[k]}(x_0,x_1,\dots,x_k)
:={f^{[k-1]}(x_0,x_1,\dots,x_{k-1})-f^{[k-1]}(x_1,\dots,x_{k-1},x_k)\over x_0-x_k}.
$$
For each $k\in\bN$ the $k$th divided difference $f^{[k]}(x_0,x_1,\dots,x_k)$ can be
extended by continuity to a continuous function on $(a,b)^{k+1}$ whenever $f$ is $C^k$ on
$(a,b)$. See \cite[pp.\ 1--7]{Do} and \cite[Sect.\ 2.2]{Hi} for properties of divided
differences.

To introduce the key notion of operator or matrix $k$-tone functions, we need to extend
the above divided differences to the operator-valued or matrix-valued version. Of course
it does not make sense to replace the real variables $x_0,x_1,\dots$ in the above with
self-adjoint operators or matrices. To extend the divided differences to operators, we fix
two $A,B$ in $B(\cH)^{sa}(a,b)$ or in $\bM_n^{sa}(a,b)$ and distinct $t_0,t_1,t_2,\dots$
in $[0,1]$. Let $X_k:=(1-t_k)A+t_kB$ for $k=0,1,2,\dots$, and define the
{\it operator-valued} or {\it matrix-valued divided difference} of $f$ as follows:
$$
f^{[1]}(A,B;t_0,t_1):={f(X_0)-f(X_1)\over t_0-t_1}
$$
and for $k=2,3,\dots$,
$$
f^{[k]}(A,B;t_0,t_1,\dots,t_k)
:={f^{[k-1]}(A,B;t_0,t_1,\dots,t_{k-1})
-f^{[k-1]}(A,B;t_1,\dots,t_{k-1},t_k)\over t_0-t_k}.
$$

In particular, for $\alpha,\beta\in(a,b)$ with $\alpha<\beta$ and for distinct
$t_0,t_1,\dots\in[0,1]$ let $x_k:=(1-t_k)\alpha+t_k\beta$; we then notice that
$$
f^{[k]}(x_0,x_1,\dots,x_k)I
={1\over(\beta-\alpha)^k}\,f^{[k]}(\alpha I,\beta I;t_0,t_1,\dots,t_k),
$$
from which we can consider $f^{[k]}(A,B;t_0,t_1,\dots,t_k)$ as a natural operator
or matrix version of the usual $k$th divided difference.
(A further generalization of divided difference for functions on vector spaces was
recently proposed in \cite{Be}.)

\begin{lemma}\label{L-1.1}
Let $A,B$ and $t_k$, $X_k$ for $k=0,1,\dots$ be as above. For every $k\in\bN$,
\begin{align*}
f^{[k]}(A,B;t_0,t_1,\dots,t_k)
&=\sum_{l=0}^k{f(X_l)\over\prod_{0\le j\le k,\,j\ne l}(t_l-t_j)} \\
&={\sum_{l=0}^k(-1)^{k-l}\prod_{0\le i<j\le k,\,i,j\ne l}(t_j-t_i)f(X_l)
\over\prod_{0\le i<j\le k}(t_j-t_i)}.
\end{align*}
Hence, $f^{[k]}(A,B;t_0,t_1,\dots,t_k)$ is symmetric in the variables $t_0,t_1,\dots,t_k$.
\end{lemma}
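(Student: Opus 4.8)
The plan is to prove the first displayed identity by induction on $k$ and then obtain the second identity and the symmetry statement as purely formal consequences. The conceptual observation I would record at the outset is that the recursion defining $f^{[k]}(A,B;t_0,\dots,t_k)$ uses only affine operations — differences and division by nonzero scalars — applied to the \emph{fixed} operators $f(X_0),f(X_1),\dots$; consequently it is literally the $k$th divided difference of the $B(\cH)$-valued (resp.\ $\bM_n$-valued) function $t\mapsto F(t):=f\big((1-t)A+tB\big)$ at the nodes $t_0,\dots,t_k$, and the classical scalar computation of \cite[pp.\ 1--7]{Do} carries over verbatim to values in a vector space. For $k=1$ the asserted formula is the definition itself, since $\frac{f(X_0)}{t_0-t_1}+\frac{f(X_1)}{t_1-t_0}=\frac{f(X_0)-f(X_1)}{t_0-t_1}$.

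For the inductive step I would substitute the two expansions of $f^{[k-1]}(A,B;t_0,\dots,t_{k-1})$ and $f^{[k-1]}(A,B;t_1,\dots,t_k)$ into the defining recursion
$$f^{[k]}(A,B;t_0,\dots,t_k)=\frac{f^{[k-1]}(A,B;t_0,\dots,t_{k-1})-f^{[k-1]}(A,B;t_1,\dots,t_k)}{t_0-t_k}$$
and collect the coefficient of each $f(X_l)$. The operators $f(X_0)$ and $f(X_k)$ occur in only one of the two sub-divided-differences, and there the factor $(t_0-t_k)^{-1}$ is precisely the missing factor of the product $\prod_{0\le j\le k,\,j\ne l}(t_l-t_j)$, giving the claimed coefficient. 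For $1\le l\le k-1$ the operator $f(X_l)$ occurs in both, with coefficients proportional to $\frac1{t_l-t_0}$ and $-\frac1{t_l-t_k}$ over the common factor $\prod_{1\le j\le k-1,\,j\ne l}(t_l-t_j)$; combining them produces $\frac{(t_l-t_k)-(t_l-t_0)}{(t_l-t_0)(t_l-t_k)}=\frac{t_0-t_k}{(t_l-t_0)(t_l-t_k)}$, whose numerator cancels the external $(t_0-t_k)^{-1}$ and leaves exactly $\frac1{\prod_{0\le j\le k,\,j\ne l}(t_l-t_j)}$. This is the only computation in the proof, and it is routine bookkeeping of indices and signs.

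For the second identity, write $V:=\prod_{0\le i<j\le k}(t_j-t_i)$ and $V_l:=\prod_{0\le i<j\le k,\,i,j\ne l}(t_j-t_i)$. Then $V/V_l$ is the product of the factors $(t_j-t_i)$ with $i<j$ in which one index equals $l$, i.e.\ $V/V_l=\prod_{i<l}(t_l-t_i)\prod_{j>l}(t_j-t_l)$; reversing the $k-l$ factors with $j>l$ in $\prod_{0\le j\le k,\,j\ne l}(t_l-t_j)$ yields $\prod_{0\le j\le k,\,j\ne l}(t_l-t_j)=(-1)^{k-l}\,V/V_l$, hence $\frac{f(X_l)}{\prod_{0\le j\le k,\,j\ne l}(t_l-t_j)}=(-1)^{k-l}\frac{V_l\,f(X_l)}{V}$, and summing over $l$ gives the stated second formula. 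Finally, symmetry in $t_0,\dots,t_k$ is read off directly from the first formula: a permutation $\sigma$ of $\{0,\dots,k\}$ merely reindexes the sum $\sum_l f(X_{\sigma(l)})\big/\prod_{j\ne l}(t_{\sigma(l)}-t_{\sigma(j)})$ back into $\sum_{l}f(X_{l})\big/\prod_{j\ne l}(t_{l}-t_{j})$. There is no analytic difficulty here; the only point requiring care is the sign/index bookkeeping in the inductive step.
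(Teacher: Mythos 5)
Your proof is correct and follows essentially the same route as the paper, which simply states that the first identity is proved by induction on $k$ and the second is a rewriting; you have supplied the routine coefficient bookkeeping (the cases $l=0$, $l=k$, and $1\le l\le k-1$ in the inductive step) and the sign computation $\prod_{0\le j\le k,\,j\ne l}(t_l-t_j)=(-1)^{k-l}V/V_l$, both of which check out. The symmetry conclusion read off from the manifestly symmetric first formula is exactly the intended argument.
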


\begin{proof}
The first equality is easy to prove by induction on $k$ and the second is a simple
rewriting.
\end{proof}

\begin{example}\label{E-1.2}\rm
Let $m\in\bN$, $f(x)=x^m$, and let $A,B$ be in $B(\cH)^{sa}$ or in $\bM_n^{sa}$. Using an
induction on $k$ one can easily verify that
\begin{align*}
&f^{[k]}(A,B;t_0,t_1,\dots,t_k) \\
&\quad=F_{k,m-k}(B-A,A)
+\sum_{l=k+1}^m\left(\sum_{j_0,j_1,\dots,j_k\ge0\atop j_0+j_1+\dots+j_k=l-k}
t_0^{j_0}t_1^{j_1}\cdots t_k^{j_k}\right)F_{l,m-l}(B-A,A),
\end{align*}
for every $k\in\bN$, where $F_{l,m-l}(X,Y)$ denotes the sum of all products of
$l$ $X$'s and $m-l$ $Y$'s for $X,Y\in B(\cH)^{sa}$ (this is defined to be zero unless
$0\le l\le m$). In particular,
\begin{align*}
&f^{[m]}(A,B;t_0,t_1,\dots,t_m)=F_{m,0}(B-A,A)=(B-A)^m, \\
&f^{[k]}(A,B;t_0,t_1,\dots,t_k)=0\qquad\mbox{for all $k>m$}.
\end{align*}
\end{example}

It is known \cite[Theorem 2.1]{BV} (also \cite[Theorem 2.3.1]{Hi}) that if $f$ is $C^k$ on
$(a,b)$, then the matrix functional calculus $f(A)$ is $k$ times Fr\'echet differentiable
at every $A\in\bM_n^{sa}(a,b)$ and the $k$th Fr\'echet derivative $D^kf(A)$, a
multi-linear map from $(\bM_n^{sa})^k$ into $\bM_n^{sa}$, is continuous in $A$.
Consequently, the $k$th derivative
$$
{d^k\over dt^k}\,f(A+tX)\Big|_{t=0}=D^kf(A)(\underbrace{X,\dots,X}_k)
$$
exists for every $A\in\bM_n^{sa}(a,b)$ and $X\in\bM_n^{sa}$ and is continuous in $A$ and
$X$. For infinite-dimensional Hilbert space operators, it was shown in \cite{DK} that
$t\mapsto f(A+tX)$ is differentiable and expressed as a double operator integral under
the $C^2$ assumption of $f$, and later Birman and Solomyak developed the general theory of
double operator integrals (its concise account is found in \cite{HK}). However, the
situation in the infinite-dimensional case is rather subtle; indeed the $C^1$ assumption
of $f$ is not sufficient for the differentiability of $f(A+tX)$ as mentioned in \cite{Pe}.
Taking this into account, we restrict ourselves to the matrix-valued case for the following
continuous extendability property of $f^{[k]}(A,B;t_0,t_1,\dots,t_n)$.

\begin{prop}\label{P-1.3}
Assume that $f$ is $C^k$ on $(a,b)$. Then for every $n\in\bN$ and every
$A,B\in\bM_n^{sa}(a,b)$, $f^{[k]}(A,B;t_0,t_1,\dots,t_k)$ for distinct
$t_0,t_1,\dots,t_k\in[0,1]$ can be extended by continuity to a function on the whole
$[0,1]^{k+1}$ so that $f^{[k]}(A,B;t_0,t_1,\dots,t_k)$ is continuous in
$(A,B;t_0,t_1,\dots,t_k)\in(\bM_n^{sa}(a,b))^2\times[0,1]^{k+1}$.
\end{prop}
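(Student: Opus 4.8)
The plan is to reduce the statement to the one–variable divided difference of the matrix‑valued function
$g(t):=f\bigl((1-t)A+tB\bigr)$ and then to invoke the Hermite--Genocchi integral formula. Since $\bM_n^{sa}(a,b)$ is convex and open, the point $X_t:=(1-t)A+tB$ lies in $\bM_n^{sa}(a,b)$ for all $t$ in an open interval containing $[0,1]$, so $g$ is well defined there; by \cite[Theorem 2.1]{BV} (quoted just before the proposition) together with the chain rule for Fr\'echet derivatives and the fact that $t\mapsto X_t$ is affine, $g$ is $C^k$ with
$$
g^{(k)}(t)=D^kf(X_t)(\underbrace{B-A,\dots,B-A}_k).
$$
Directly from the defining recursions, $f^{[k]}(A,B;t_0,t_1,\dots,t_k)$ is nothing but the ordinary $k$th divided difference $g^{[k]}(t_0,t_1,\dots,t_k)$ of the $\bM_n$‑valued function $g$.

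Next I would record the Hermite--Genocchi formula for the matrix‑valued $C^k$ function $g$: for distinct $t_0,t_1,\dots,t_k\in[0,1]$,
$$
g^{[k]}(t_0,t_1,\dots,t_k)=\int_{\Sigma_k}g^{(k)}\Bigl(\sum_{i=0}^k s_i t_i\Bigr)\,ds_1\cdots ds_k ,
$$
where $\Sigma_k:=\{(s_1,\dots,s_k):s_i\ge0,\ s_1+\dots+s_k\le1\}$ and $s_0:=1-s_1-\dots-s_k$. This is proved by the same induction on $k$ as in the scalar case, applied entrywise to $g$, so no new work is required. The key observation is that the right‑hand side is meaningful for \emph{all} $(t_0,\dots,t_k)\in[0,1]^{k+1}$, since $\sum_i s_i t_i$ is a convex combination of the $t_i$ and hence stays in $[0,1]$; this provides the desired continuous extension of $f^{[k]}(A,B;\,\cdot\,)$ to the whole cube $[0,1]^{k+1}$.

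Finally, for the joint continuity I would substitute $g^{(k)}(u)=D^kf(X_u)(B-A,\dots,B-A)$, obtaining for every $(t_0,\dots,t_k)\in[0,1]^{k+1}$
$$
f^{[k]}(A,B;t_0,\dots,t_k)=\int_{\Sigma_k}D^kf\bigl(X_{\sigma}\bigr)(B-A,\dots,B-A)\,ds_1\cdots ds_k ,\qquad \sigma:=\sum_{i=0}^k s_i t_i .
$$
The map $(A,B,u)\in(\bM_n^{sa}(a,b))^2\times[0,1]\mapsto D^kf\bigl((1-u)A+uB\bigr)(B-A,\dots,B-A)$ is continuous, because $(A,B,u)\mapsto(1-u)A+uB$ is continuous, $A'\mapsto D^kf(A')$ is norm‑continuous by the cited theorem, and evaluation of a multilinear map on vectors is jointly continuous. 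Hence the integrand above is jointly continuous in $(A,B;t_0,\dots,t_k;s_1,\dots,s_k)$, and since $\Sigma_k$ is compact the integral is continuous in $(A,B;t_0,\dots,t_k)$ on $(\bM_n^{sa}(a,b))^2\times[0,1]^{k+1}$ by uniform continuity on compacta (equivalently, by dominated convergence). This proves the proposition.

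I do not expect a serious obstacle: the only genuine analytic input is the $C^k$‑differentiability of the matrix functional calculus quoted immediately before the proposition, and the remainder is the classical divided‑difference machinery, which carries over verbatim to matrix‑valued functions. The one point deserving a little care is that, in order to get continuity in $A$, $B$ and in the $t_i$ \emph{simultaneously}, one should run the argument through the explicit Hermite--Genocchi integral rather than reason abstractly with the extended divided difference.
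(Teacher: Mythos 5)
Your proof is correct, but it takes a different (and more self-contained) route than the paper's. The paper scalarizes: for each state $\omega$ on $\bM_n$ it sets $\phi_\omega(t):=\omega(f((1-t)A+tB))$, observes that $\phi_\omega^{[k]}(t_0,\dots,t_k)=\omega(f^{[k]}(A,B;t_0,\dots,t_k))$, invokes the classical fact that the scalar divided difference of a $C^k$ function extends continuously to the diagonal, and then, for the joint continuity in $(A,B;t_0,\dots,t_k)$, refers the reader to the proof of a lemma in \cite{Hi} without giving details. You instead identify $f^{[k]}(A,B;\,\cdot\,)$ with the ordinary divided difference of the matrix-valued map $g(t)=f((1-t)A+tB)$ and write it via the Hermite--Genocchi integral over the simplex $\Sigma_k$; since the integrand $D^kf(X_\sigma)(B-A,\dots,B-A)$ is jointly continuous in all variables and $\Sigma_k$ is compact, both the extension to $[0,1]^{k+1}$ and the full joint continuity drop out of a single formula. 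What your approach buys is that the hardest part of the statement --- the continuity in $(A,B)$ simultaneously with the $t_i$, which the paper only sketches by citation --- is handled explicitly; the only external input is the norm-continuity of $A\mapsto D^kf(A)$ from \cite{BV}, which the paper also quotes. The one point worth stating explicitly in your write-up is that $\sigma=\sum_i s_it_i\in[0,1]$ so that $X_\sigma$ stays in $\bM_n^{sa}(a,b)$ by convexity --- you do note this, so there is no gap.
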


\begin{proof}
Let $A,B\in\bM_n^{sa}(a,b)$ and choose a $\delta>0$ so that $(1-t)A+tB\in\bM_n^{sa}(a,b)$
for all $t\in(-\delta,1+\delta)$. For any state $\omega$ on $\bM_n$ define
$$
\phi_\omega(t):=\omega(f((1-t)A+tB)),\qquad t\in(-\delta,1+\delta).
$$
For any distinct $t_0,t_1,\dots,t_k$ in $[0,1]$ it is obvious by definition that
\begin{equation}\label{eq-1.1}
\phi_\omega^{[k]}(t_0,t_1,\dots,t_k)=\omega(f^{[k]}(A,B;t_0,t_1,\dots,t_k)).
\end{equation}
From the $C^k$ assumption of $f$, the matrix-valued function $t\mapsto f((1-t)A+tB)$ is
$C^k$ on $(-\delta,1+\delta)$ as remarked before the proposition. Hence $\phi_\omega$ is
$C^k$ on $(-\delta,1+\delta)$ so that $\phi_\omega^{[k]}(t_0,t_1,\dots,t_k)$ can extend to
a continuous function on $(-\delta,1+\delta)^{k+1}$. Since this is the case for every
state $\omega$, we see that $f^{[k]}(A,B;t_0,t_1,\dots,t_k)$ extends to a continuous
function on $[0,1]^{k+1}$. Furthermore, the stronger continuity of $f^{[k]}$ in the whole
variables $(A,B;t_0,t_1,\dots,t_k)$ can be shown in a way similar to the proof of
\cite[Lemma 2.2.4]{Hi} by using the continuity of ${d^l\over dt^l}\,f((1-t)A+tB)$,
$0\le l\le k$, in $(A,B,t)$. We omit the details for this.
\end{proof}

\subsection{Definition of operator/matrix $k$-tone functions}
\begin{definition}\label{D-1.4}\rm
Let $k\in\bN$ and $f$ be a real continuous function on $(a,b)$. We say that $f$ is
{\it operator $k$-tone} on $(a,b)$ if, for every $A,B\in B(\cH)^{sa}(a,b)$ with $A\le B$
and for any $0=t_0<t_1<\dots<t_k=1$, we have
\begin{equation}\label{eq-1.2}
f^{[k]}(A,B;t_0,t_1,\dots,t_k)\ge0,
\end{equation}
or equivalently, due to Lemma \ref{L-1.1},
\begin{equation}\label{eq-1.3}
\sum_{l=0}^k(-1)^{k-l}\prod_{0\le i<j\le k\atop i,j\ne l}(t_j-t_i)f(X_l)\ge0,
\end{equation}
where $X_l:=(1-t_l)A+t_lB$, $0\le l\le k$. Moreover, for each $n\in\bN$, if a real (not
necessarily continuous) function $f$ on $(a,b)$ satisfies \eqref{eq-1.2}, or equivalently
\eqref{eq-1.3}, for every $A,B\in\bM_n^{sa}(a,b)$ with $A\le B$ and for any
$t_0,t_1,\dots,t_k$ as above, then we say that $f$ is {\it matrix $k$-tone of order $n$}
on $(a,b)$. A matrix $k$-tone function of order $1$ (i.e., a $k$-tone function in the
numerical sense) is said to be {\it $k$-tone} for short.
\end{definition}

When $k=1$, inequality \eqref{eq-1.3} is nothing but $-f(A)+f(B)\ge0$ so that $f$ is
operator $1$-tone on $(a,b)$ if and only if it is operator monotone on $(a,b)$. When $k=2$,
\eqref{eq-1.3} is
$$
(1-t_1)f(A)-f((1-t_1)A+t_1B)+t_1f(B)\ge0,\qquad0<t_1<1.
$$
Hence $f$ is matrix $2$-tone of order $n$ on $(a,b)$ if and only if $f$ is conditionally
$n$-convex there (the conditional matrix convexity here means the matrix convexity under
condition $A\le B$). Note \cite{Kr} (also \cite[Theorem 2.4.4]{Hi}) that the conditional
$n$-convexity is equivalent to the usual $n$-convexity. Thus, Definition \ref{D-1.4}
may be considered as a natural higher order extension of operator/matrix monotone or convex
functions.

By Example \ref{E-1.2} note that any polynomial function $\sum_{l=0}^k\alpha_kx^k$ of real
coefficients with $\alpha_k\ge0$ is operator $k$-tone on the whole $\bR$.

\begin{lemma}\label{L-1.5}
Let $k,n\in\bN$. A real function $f$ on $(a,b)$ is matrix $k$-tone of order $n$ if and
only if $f$ satisfies \eqref{eq-1.2} for every $A,B\in\bM_n^{sa}(a,b)$ and for any distinct
$t_0,t_1,\dots,t_k\in[0,1]$. The same is true for the operator $k$-tonicity of a real
continuous function $f$.
\end{lemma}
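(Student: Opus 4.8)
The plan is to reduce the (apparently more general) condition in the statement to the defining one of Definition \ref{D-1.4} by exploiting two elementary features of the matrix-valued divided difference: its invariance under permutations of the node parameters, and its transformation rule under an affine reparametrization of those parameters. Throughout, as in Definition \ref{D-1.4}, we assume $A\le B$; the content of the lemma is that the constraints $t_0=0$, $t_k=1$ and $t_0<t_1<\dots<t_k$ in that definition may be dropped.

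The non-trivial implication is that matrix $k$-tonicity of order $n$ forces \eqref{eq-1.2} for \emph{arbitrary} distinct $t_0,\dots,t_k\in[0,1]$. First I would invoke Lemma \ref{L-1.1} to see that $f^{[k]}(A,B;t_0,\dots,t_k)$ is a symmetric function of $t_0,\dots,t_k$, so it suffices to treat the case $t_0<t_1<\dots<t_k$. Next I would relocate the endpoints to $0$ and $1$: given $A\le B$ in $\bM_n^{sa}(a,b)$ and $s_0<s_1<\dots<s_k$ in $[0,1]$, set $A':=(1-s_0)A+s_0B$ and $B':=(1-s_k)A+s_kB$. Convexity of $\bM_n^{sa}(a,b)$ puts $A',B'$ in $\bM_n^{sa}(a,b)$, and $B'-A'=(s_k-s_0)(B-A)\ge0$ since $s_k>s_0$, so $A'\le B'$. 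With $t_l:=(s_l-s_0)/(s_k-s_0)$ one gets $0=t_0<t_1<\dots<t_k=1$ and, crucially, the sample points coincide: $(1-s_l)A+s_lB=(1-t_l)A'+t_lB'$ for every $l$. Feeding $s_l-s_j=(s_k-s_0)(t_l-t_j)$ into the closed-form expression of Lemma \ref{L-1.1} then yields $f^{[k]}(A,B;s_0,\dots,s_k)=(s_k-s_0)^{-k}\,f^{[k]}(A',B';t_0,\dots,t_k)$, whose right-hand side is $\ge0$ by Definition \ref{D-1.4} applied to $A'\le B'$ and the partition $0=t_0<\dots<t_k=1$. The converse implication is immediate, since $0=t_0<\dots<t_k=1$ is one admissible choice of distinct nodes. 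For the operator case the argument runs verbatim with $B(\cH)^{sa}(a,b)$ in place of $\bM_n^{sa}(a,b)$: $A'$ and $B'$ are plain convex combinations of $A,B$, so no domain or continuity issue arises, and $(1-t_l)A'+t_lB'$ is again a point of the segment from $A$ to $B$.

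I expect no real obstacle; the work is entirely bookkeeping. The two points to get right are: (i) the divided difference scales by exactly the factor $(s_k-s_0)^{-k}$ under the substitution $s_l=s_0+(s_k-s_0)t_l$, which is transparent from the explicit formula in Lemma \ref{L-1.1} once one notes there are precisely $k$ factors $s_l-s_j$ in each denominator; and (ii) the hypothesis $A\le B$ is inherited by the sub-segment endpoints $A'\le B'$, which is where $s_k>s_0$ is used. Proposition \ref{P-1.3} and any differentiability of $f$ play no role here — the identity is purely algebraic.
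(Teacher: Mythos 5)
Your proof is correct and follows essentially the same route as the paper's: reduce to ordered nodes via the symmetry from Lemma \ref{L-1.1}, then move the endpoints to $0$ and $1$ by the affine substitution, observing that the sub-segment endpoints $A'\le B'$ stay in $\bM_n^{sa}(a,b)$ and that the sample points coincide, so the divided difference changes only by the positive factor $(s_k-s_0)^{-k}$. You are in fact slightly more precise than the paper, which writes the two divided differences as equal and silently drops that (harmless) positive scalar.
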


The lemma is easily shown by using Lemma \ref{L-1.1}, and the following
proposition is shown by a standard convergence argument (as will also be used
in the proof of Corollary \ref{C-3.5}\,(c)). Proofs of these may be omitted here.

\begin{prop}\label{P-1.6}
Let $k\in\bN$ and $f$ be a real continuous function on $(a,b)$. Then $f$ is operator
$k$-tone if and only if it is matrix $k$-tone of every order $n$.
\end{prop}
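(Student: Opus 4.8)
The plan is to prove the two directions separately, with the nontrivial one being ``matrix $k$-tone of every order $n$ $\Rightarrow$ operator $k$-tone'' via a finite-dimensional approximation of the pair $A,B\in B(\cH)^{sa}(a,b)$. The other direction is a compression argument: if $f$ is operator $k$-tone and $A,B\in\bM_n^{sa}(a,b)$ with $A\le B$, embed $\bM_n$ as a corner of $B(\cH)$ and extend $A,B$ to operators $\tilde A\le\tilde B$ in $B(\cH)^{sa}(a,b)$ (for instance by filling the complementary block with a scalar multiple of the identity lying in $(a,b)$). Since $\tilde X_l:=(1-t_l)\tilde A+t_l\tilde B$ restricts on the corner to $X_l=(1-t_l)A+t_lB$, and the functional calculus commutes with this block-diagonal structure, $f(\tilde X_l)$ restricts to $f(X_l)$; hence $f^{[k]}(A,B;t_0,\dots,t_k)$ is the corner of $f^{[k]}(\tilde A,\tilde B;t_0,\dots,t_k)\ge0$ and so is itself positive semidefinite.

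For the forward direction, fix $A,B\in B(\cH)^{sa}(a,b)$ with $A\le B$ and $0=t_0<t_1<\dots<t_k=1$, and fix a unit vector $\xi\in\cH$; it suffices to show $\langle f^{[k]}(A,B;t_0,\dots,t_k)\xi,\xi\rangle\ge0$. The idea is to choose an increasing sequence of finite-dimensional subspaces $\cH_1\subset\cH_2\subset\cdots$ with $\xi\in\cH_1$ and $\bigcup_m\cH_m$ dense in $\cH$, let $P_m$ be the orthogonal projection onto $\cH_m$, and set $A_m:=P_mAP_m$, $B_m:=P_mBP_m$, viewed as elements of $\bM_{n_m}^{sa}$ where $n_m=\dim\cH_m$. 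Then $A_m\le B_m$, and by choosing the $\cH_m$ to eventually contain the spectral data relevant to $A,B$ one arranges that the spectra of $A_m,B_m$ lie in $(a,b)$ for $m$ large (more carefully: one may first shrink to a compact subinterval $[a',b']\subset(a,b)$ containing the spectra of $A$ and $B$, since these operators are bounded, and then the compressions automatically have spectra in $[a',b']\subset(a,b)$ by the minimax principle). By hypothesis $f$ is matrix $k$-tone of order $n_m$, so $f^{[k]}(A_m,B_m;t_0,\dots,t_k)\ge0$, whence $\langle f^{[k]}(A_m,B_m;t_0,\dots,t_k)\xi,\xi\rangle\ge0$ for each $m$.

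It then remains to pass to the limit, i.e.\ to show
$$
\bigl\langle f^{[k]}(A_m,B_m;t_0,\dots,t_k)\xi,\xi\bigr\rangle
\longrightarrow
\bigl\langle f^{[k]}(A,B;t_0,\dots,t_k)\xi,\xi\bigr\rangle
\qquad(m\to\infty).
$$
By Lemma \ref{L-1.1}, $f^{[k]}(A,B;t_0,\dots,t_k)$ is a fixed finite linear combination of the operators $f(X_l)$ with $X_l=(1-t_l)A+t_lB$, and likewise $f^{[k]}(A_m,B_m;t_0,\dots,t_k)$ is the same linear combination of the $f(X_l^{(m)})$ with $X_l^{(m)}=(1-t_l)A_m+t_lB_m=P_m X_l P_m$. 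So the whole matter reduces to showing that $\langle f(P_mX_lP_m)\xi,\xi\rangle\to\langle f(X_l)\xi,\xi\rangle$ for each fixed $l$. Since $X_l$ is a bounded self-adjoint operator with spectrum in a compact subinterval $[a',b']\subset(a,b)$, and $f$ is continuous, by the Weierstrass theorem $f$ can be uniformly approximated on $[a',b']$ by polynomials; for a polynomial $p$ one has $\langle p(P_mX_lP_m)\xi,\xi\rangle\to\langle p(X_l)\xi,\xi\rangle$ because $P_m\to I$ strongly implies $P_mX_lP_m\to X_l$ strongly, hence $(P_mX_lP_m)^j\to X_l^j$ strongly for each $j$ (a standard induction, using that strong convergence is multiplicative on bounded sets against a fixed vector). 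A routine $3\eps$ estimate — uniform in $m$ once the spectra are all trapped in $[a',b']$, which bounds $\|f(P_mX_lP_m)-p(P_mX_lP_m)\|$ by $\sup_{[a',b']}|f-p|$ — then gives the convergence for $f$ itself. Combining, the limit of nonnegative numbers is nonnegative, so $\langle f^{[k]}(A,B;t_0,\dots,t_k)\xi,\xi\rangle\ge0$; as $\xi$ was arbitrary, $f^{[k]}(A,B;t_0,\dots,t_k)\ge0$ and $f$ is operator $k$-tone.

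The main obstacle is the spectral containment: one must be sure that the compressions $P_mX_lP_m$ do not have spectrum escaping $(a,b)$, and the clean fix is to work from the start on a compact subinterval $[a',b']$ containing the spectra of $A$ and $B$ (possible since $A,B$ are bounded with spectra in the open set $(a,b)$), so that the minimax principle confines every compression's spectrum to $[a',b']$ and the polynomial approximation of $f$ is uniform over all the relevant operators at once. Everything else — the block-diagonal compression argument for the easy direction, and strong convergence of polynomials in the compressions — is routine.
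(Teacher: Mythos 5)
Your proposal is correct and follows essentially the same route as the paper: compress $A,B$ to an increasing sequence of finite-dimensional subspaces, apply the matrix $k$-tonicity of order $n$ to the compressions, and pass to the limit using strong convergence of $f$ of the compressions via uniform polynomial approximation on a compact subinterval $[a',b']\subset(a,b)$ containing the spectra. The only differences are cosmetic: you spell out the spectral containment of the compressions and the ``only if'' embedding, which the paper treats as obvious.
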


\subsection{Integral representations of operator monotone functions}

In this subsection we show integral representations of operator monotone functions on
$(-1,1)$ and on $(0,\infty)$, which will be useful in our later discussions. Although such
integral representations are well known as described in \cite[Sect.\ V.4]{Bh} (also
\cite[Sect.\ 2.7]{Hi}), our representations below are slight modifications of the standard
ones, including a new insight on certain universality of the representing measure. The
representations will indeed be extended to operator $k$-tone functions in Theorems
\ref{T-4.1} and \ref{T-5.1}.

\begin{thm}\label{T-1.8}
Let $f$ be an operator monotone function on $(-1,1)$. Then there exists a unique finite
positive measure $\mu$ on $[-1,1]$ such that for any choice of $\alpha\in(-1,1)$,
\begin{equation}\label{eq-1.5}
f(x)=f(\alpha)+\int_{[-1,1]}{x-\alpha\over(1-\lambda x)(1-\lambda\alpha)}\,d\mu(\lambda),
\qquad x\in(-1,1).
\end{equation}
\end{thm}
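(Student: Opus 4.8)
The plan is to start from the classical integral representation of an operator monotone function and rewrite it so that the base point $\alpha$ can be moved freely while keeping a single representing measure. Recall the standard fact (see \cite[Sect.\ V.4]{Bh} or \cite[Sect.\ 2.7]{Hi}) that an operator monotone $f$ on $(-1,1)$ admits a representation
\begin{equation*}
f(x)=f(0)+\beta\int_{[-1,1]}{x\over1-\lambda x}\,d\nu(\lambda),
\qquad x\in(-1,1),
\end{equation*}
with $\beta=f'(0)\ge0$ and $\nu$ a probability measure on $[-1,1]$ (equivalently, a finite positive measure absorbing $\beta$, which I will simply call $\mu$). The first step is to observe the elementary partial-fraction identity
\begin{equation*}
{x\over1-\lambda x}-{\alpha\over1-\lambda\alpha}
={x-\alpha\over(1-\lambda x)(1-\lambda\alpha)},
\end{equation*}
valid for all $\lambda\in[-1,1]$ and all $x,\alpha\in(-1,1)$ (the denominators are nonzero there). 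Subtracting the representation of $f$ at $x$ from that at $\alpha$ and using this identity term by term under the integral sign gives exactly \eqref{eq-1.5}. This shows that the \emph{same} measure $\mu$ works for every $\alpha$, which is the ``universality'' point; existence is thereby reduced to the known $\alpha=0$ case.

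For uniqueness, I would fix any one $\alpha$, say $\alpha=0$, and argue that $\mu$ is determined by $f$. From \eqref{eq-1.5} with $\alpha=0$ we get $f(x)-f(0)=\int_{[-1,1]}{x\over1-\lambda x}\,d\mu(\lambda)$. Dividing by $x$ and letting $x\to0$ recovers $\mu([-1,1])=f'(0)$, but more usefully one can extract the moments of $\mu$: expanding ${x\over1-\lambda x}=\sum_{m\ge1}\lambda^{m-1}x^m$ for small $x$ and matching with the Taylor expansion of $f$ at $0$ shows $\int\lambda^{m-1}\,d\mu(\lambda)=f^{(m)}(0)/m!$ for all $m\ge1$. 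Since a finite positive measure on the compact interval $[-1,1]$ is determined by its moments (Weierstrass/Stone--Weierstrass), $\mu$ is unique. Alternatively one can invoke the Nevanlinna/Pick theory uniqueness directly, but the moment argument is self-contained. Either way, uniqueness for one $\alpha$ clearly forces uniqueness for all, since the measure does not depend on $\alpha$.

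The main obstacle is essentially bookkeeping rather than depth: one must justify interchanging the (finite, nonnegative) sum/integral with the subtraction, which is immediate since $\mu$ is a finite measure and the integrands are bounded continuous functions of $\lambda$ on $[-1,1]$ uniformly for $x$ in a compact subinterval; and one must make sure the normalization of the classical representation being quoted matches the form \eqref{eq-1.5} (whether the ``$f'(0)\,x$'' linear term is present). In the representation on $(-1,1)$ the kernel ${x}/(1-\lambda x)$ already has slope $1$ at $x=0$ only after integrating against $\lambda^0$, so the linear term is automatically absorbed into the $\lambda=0$ part of $\mu$; I would remark on this explicitly so that no separate affine term is needed. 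With those two routine points dispatched, the proof is complete.
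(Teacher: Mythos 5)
Your proof is correct, but it is organized quite differently from the paper's, and in fact more economically. The paper proves existence of a representation at \emph{each} base point $\alpha$ separately: it composes $f$ with the M\"obius map $\ffi_\alpha(t)=(t+\alpha)/(1+\alpha t)$, applies the classical representation at $0$ to $f\circ\ffi_\alpha$, and pushes the resulting measure forward to obtain an a priori $\alpha$-dependent measure $\mu_\alpha$; it then proves $\mu_\alpha=\mu_\beta$ using the cocycle identity
$\frac{x-\alpha}{(1-\lambda x)(1-\lambda\alpha)}=\frac{\beta-\alpha}{(1-\lambda\alpha)(1-\lambda\beta)}+\frac{x-\beta}{(1-\lambda x)(1-\lambda\beta)}$
together with the uniqueness of the representing measure at $\beta$. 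You instead take the classical representation only at $\alpha=0$ and observe that the single partial-fraction identity $\frac{x}{1-\lambda x}-\frac{\alpha}{1-\lambda\alpha}=\frac{x-\alpha}{(1-\lambda x)(1-\lambda\alpha)}$ (which is the same algebraic fact underlying the paper's cocycle identity, namely $F(x)-F(\alpha)$ for $F(t)=t/(1-\lambda t)$) immediately yields \eqref{eq-1.5} for every $\alpha$ with the \emph{same} measure, so the independence of $\alpha$ comes for free rather than being a separate step; the interchange of subtraction and integration is harmless since $\mu$ is finite and the kernels are bounded in $\lambda$ for fixed $x,\alpha$. Your uniqueness argument via moments ($\int\lambda^{m-1}\,d\mu=f^{(m)}(0)/m!$ plus density of polynomials in $C([-1,1])$) is also self-contained, whereas the paper reduces uniqueness back to that of the classical representation. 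What your route buys is brevity and the elimination of the push-forward bookkeeping; what the paper's route buys is an explicit description of how the representing measure transforms under reparametrization of the interval, which is reused almost verbatim in the proof of Theorem \ref{T-1.10} on $(0,\infty)$, where the half-line geometry genuinely forces the change-of-variables approach and produces the extra term $\gamma(x-\alpha)$. Your remark that no separate affine term appears on $(-1,1)$ because the $\lambda=0$ part of $\mu$ already carries it is exactly the right normalization check.
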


\begin{proof}
For each fixed $\alpha\in(-1,1)$ consider the transformation $\ffi_\alpha$ from $[-1,1]$
onto itself defined by
$$
\ffi_\alpha(t):={t+\alpha\over1+\alpha t},\qquad t\in[-1,1],
$$
which is an operator monotone function with $\ffi_\alpha(0)=\alpha$. Apply \cite[V.4.5]{Bh}
to obtain a unique finite positive measure $m_\alpha$ on $[-1,1]$ such that
\begin{equation}\label{eq-1.6}
(f\circ\ffi_\alpha)(t)=f(\alpha)+\int_{[-1,1]}{t\over1-\kappa t}\,dm_\alpha(\kappa),
\qquad t\in(-1,1).
\end{equation}
Defining a finite positive measure $\mu_\alpha$ on $[-1,1]$ by
$$
d\mu_\alpha(\lambda):={1-\lambda\alpha\over1+\alpha\ffi_\alpha^{-1}(\lambda)}
\,dm_\alpha(\ffi_\alpha^{-1}(\lambda)),
$$
we have
\begin{equation}\label{eq-1.7}
f(x)=f(\alpha)+\int_{[-1,1]}{x-\alpha\over(1-\lambda x)(1-\lambda\alpha)}
\,d\mu_\alpha(\lambda),\qquad x\in(-1,1),
\end{equation}
which is representation \eqref{eq-1.5} while $\mu=\mu_\alpha$ is depending on
$\alpha\in(-1,1)$ at the moment. Moreover, since expression \eqref{eq-1.7} can conversely
be converted into \eqref{eq-1.6}, it is seen that a representing measure $\mu_\alpha$ in
\eqref{eq-1.7} is unique.

Now, we prove that $\mu_\alpha$ is independent of the parameter $\alpha\in(-1,1)$.
For any $\alpha,\beta\in(-1,1)$, inserting
$$
{x-\alpha\over(1-\lambda x)(1-\lambda\alpha)}
={\beta-\alpha\over(1-\lambda\alpha)(1-\lambda\beta)}
+{x-\beta\over(1-\lambda x)(1-\lambda\beta)}
$$
into \eqref{eq-1.7} we have
$$
f(x)=f(\alpha)
+\int_{[-1,1]}{\beta-\alpha\over(1-\lambda\alpha)(1-\lambda\beta)}\,d\mu_\alpha(\lambda)
+\int_{[-1,1]}{x-\beta\over(1-\lambda x)(1-\lambda\beta)}\,d\mu_\alpha(\lambda).
$$
Letting $x=\beta$ gives
$$
f(\beta)=f(\alpha)
+\int_{[-1,1]}{\beta-\alpha\over(1-\lambda\alpha)(1-\lambda\beta)}\,d\mu_\alpha(\lambda),
$$
and $\mu_\alpha=\mu_\beta$ follows from the uniqueness of $\mu_\beta$ representing $f$ in
\eqref{eq-1.7} with $\beta$ in place of $\alpha$, so the theorem has been proved.
\end{proof}

Note that from \eqref{eq-1.5} we have
$$
f'(\alpha)=\int_{[-1,1]}{1\over(1-\lambda\alpha)^2}\,d\mu(\lambda),\qquad\alpha\in(-1,1).
$$
In particular, $\mu([-1,1])=f'(0)$.

The theorem has the following corollary, which will play an essential role to prove the
main theorem of Section \ref{sec-3}.

\begin{cor}\label{C-1.9}
Let $f$ be an operator monotone function on $(-1,1)$ with the representing
measure $\mu$ as in Theorem \ref{T-1.8}. For every $\alpha\in (-1,1)$ and every
$m,k\in\bN$ with $m\ge k$,
$$
\bigl((x-\alpha)^{k-1}f\bigr)^{[m]}(x_1,x_2,\dots,x_{m+1})
=\int_{[-1,1]}{\lambda^{m-k}(1-\lambda\alpha)^{k-1}\over
(1-\lambda x_1)(1-\lambda x_2)\cdots(1-\lambda x_{m+1})}\,d\mu(\lambda)
$$
for all $x_1,x_2,\dots,x_{m+1}\in(-1,1)$, where $\lambda^{m-k}\equiv1$ on $[-1,1]$ if
$m=k$.
\end{cor}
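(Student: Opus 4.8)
The plan is to combine the integral representation \eqref{eq-1.5} for $f$ with the basic fact that divided differences act as linear functionals, and that the $m$th divided difference of the rational function $x\mapsto 1/(1-\lambda x)$ is explicit. Indeed, starting from
\[
(x-\alpha)^{k-1}f(x)
=(x-\alpha)^{k-1}f(\alpha)
+\int_{[-1,1]}{(x-\alpha)^{k}\over(1-\lambda x)(1-\lambda\alpha)}\,d\mu(\lambda),
\]
I would apply $(\cdot)^{[m]}$ in the variables $x_1,\dots,x_{m+1}$. The polynomial term $(x-\alpha)^{k-1}f(\alpha)$ has degree $k-1<m$, so its $m$th divided difference vanishes. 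For the integral term, one first needs to justify interchanging $(\cdot)^{[m]}$ with the integral; since $(\cdot)^{[m]}$ is a fixed finite linear combination of point evaluations (Lemma \ref{L-1.1}, or rather its numerical analogue as recalled on p.\ 2 of \cite{Do}), this is immediate — the divided difference of an integral of a jointly continuous, uniformly bounded family equals the integral of the divided differences.

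It then remains to compute, for fixed $\lambda\in[-1,1]$,
\[
\Bigl({(x-\alpha)^{k}\over(1-\lambda x)(1-\lambda\alpha)}\Bigr)^{[m]}(x_1,\dots,x_{m+1}).
\]
The constant factor $1/(1-\lambda\alpha)$ pulls out. For the rest I would use the product/Leibniz rule for divided differences together with the elementary identity
\[
{(x-\alpha)^{k}\over1-\lambda x}
=-{1\over\lambda}(x-\alpha)^{k-1}
+{1-\lambda\alpha\over\lambda}\cdot{(x-\alpha)^{k-1}\over1-\lambda x}
\]
(valid for $\lambda\neq0$), iterating it $k$ times to reduce the power $(x-\alpha)^k$ in the numerator down to $0$; the polynomial remainders collected along the way all have degree $\le k-1<m$ and hence contribute nothing after taking $(\cdot)^{[m]}$. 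What survives is $\bigl((1-\lambda\alpha)^k/\lambda^k\bigr)$ times $(1/(1-\lambda x))^{[m]}$, and the well-known formula $(1/(1-\lambda x))^{[m]}(x_1,\dots,x_{m+1})=\lambda^m/\prod_{j=1}^{m+1}(1-\lambda x_j)$ gives exactly the integrand $\lambda^{m-k}(1-\lambda\alpha)^{k-1}/\prod_{j=1}^{m+1}(1-\lambda x_j)$. The case $\lambda=0$ (a point mass) and the case $m=k$ (where $\lambda^{m-k}\equiv 1$) are handled directly from this computation and match the stated convention.

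The main obstacle, such as it is, is bookkeeping rather than anything deep: one must verify that every polynomial remainder produced in the reduction genuinely has degree $<m$ (this uses $m\ge k$ crucially, so that $(x-\alpha)^{k-1}$ already has degree below $m$), and one must confirm the sign/power-of-$\lambda$ tracking so the final exponent is $m-k$ and the $(1-\lambda\alpha)$-power is $k-1$ rather than $k$ (one factor $1-\lambda\alpha$ is eaten by the denominator of the original integrand). Alternatively, and perhaps more cleanly, I could avoid the iteration altogether: write $(x-\alpha)^k=\sum_{j=0}^k\binom{k}{j}(1-\lambda\alpha)^{k-j}(-1/\lambda)^{?}\cdots$ — but in fact the slickest route is to observe $(x-\alpha)/(1-\lambda x)=\lambda^{-1}\bigl((1-\lambda\alpha)/(1-\lambda x)-1\bigr)$, raise to the $k$th power, expand binomially, and note that only the top term $\lambda^{-k}(1-\lambda\alpha)^k/(1-\lambda x)^k$ has numerator-degree $\ge m$ after one more division by $1-\lambda x$; the remaining terms are polynomials of degree $<k\le m$. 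Either way the computation is short once organized, and the interchange of sum/integral with the divided difference is the only analytic point, which is trivial here.
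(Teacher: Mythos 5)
Your proposal is correct and follows essentially the same route as the paper: insert the representation \eqref{eq-1.5} into $(x-\alpha)^{k-1}f$, discard the degree-$(k-1)$ polynomial term (which dies under $(\cdot)^{[m]}$ since $m\ge k$), pass the $m$th divided difference inside the integral, and reduce the kernel to $\lambda^{-k}(1-\lambda\alpha)^{k}/(1-\lambda x)$ plus a polynomial of degree at most $k-1<m$ --- the paper performs this reduction in one shot by expanding $(x_i-\alpha)^k=\lambda^{-k}\{(1-\lambda\alpha)-(1-\lambda x_i)\}^k$ binomially, which is exactly what your iterated one-step identity telescopes to. The only blemish is your parenthetical ``slicker'' variant, which raises $(x-\alpha)/(1-\lambda x)=\lambda^{-1}\bigl((1-\lambda\alpha)/(1-\lambda x)-1\bigr)$ to the $k$th power and produces terms involving $(1-\lambda x)^{-j}$ that are not polynomials; that aside does not work as stated, but it is not needed and your main argument stands.
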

\begin{proof}
For every $\lambda\in[-1,1]\setminus\{0\}$ we have
\begin{align}
{(x-\alpha)^k\over(1-\lambda x)(1-\lambda\alpha)^k}
&={\{(1-\lambda\alpha)-(1-\lambda x)\}^k\over\lambda^k(1-\lambda x)(1-\lambda\alpha)^k}
\nonumber\\
&={1\over\lambda^k(1-\lambda x)}+(\mbox{a polynomial of degree $k-1$}).\label{eq-1.a}
\end{align}
Since $m\ge k$, we hence have
\begin{align*}
&\biggl({(x-\alpha)^k\over(1-\lambda x)(1-\lambda\alpha)^k}\biggr)^{[m]}
(x_1,x_2,\dots,x_{m+1}) \\
&\qquad={1\over\lambda^k}\biggl({1\over1-\lambda x}\biggr)^{[m]}(x_1,x_2,\dots,x_{m+1})
={\lambda^{m-k}\over(1-\lambda x_1)(1-\lambda x_2)\cdots(1-\lambda x_{m+1})}
\end{align*}
for all $x_1,x_2,\dots,x_{k+1}\in(-1,1)$. The above certainly holds for $\lambda=0$ as
well. Therefore,
integrating against the measure $(1-\lambda \alpha)^{k-1}\,d\mu$
gives the result as we can take the $k$th divided
difference inside the integral in \eqref{eq-1.5}.
\end{proof}

\begin{thm}\label{T-1.10}
Let $f$ be an operator monotone function on $(0,\infty)$. Then there exists a unique
$\gamma\ge0$ and a unique positive measure $\mu$ on $[0,\infty)$ such that
$$
\int_{[0,\infty)}{1\over(1+\lambda)^2}\,d\mu(\lambda)<+\infty
$$
and for any choice of $\alpha\in(0,\infty)$,
\begin{equation}\label{eq-1.8}
f(x)=f(\alpha)+\gamma(x-\alpha)
+\int_{[0,\infty)}{x-\alpha\over(x+\lambda)(\alpha+\lambda)}\,d\mu(\lambda),
\qquad x\in(0,\infty).
\end{equation}
\end{thm}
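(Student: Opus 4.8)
The plan is to reduce Theorem \ref{T-1.10} to Theorem \ref{T-1.8} by transporting everything through the Möbius transformation $\psi(x):=(x-1)/(x+1)$, which maps $(0,\infty)$ bijectively onto $(-1,1)$ with inverse $\psi^{-1}(t)=(1+t)/(1-t)$, both $\psi$ and $\psi^{-1}$ being operator monotone (each is of the form $a-b/(\cdot+c)$ with $b,c>0$). First I would observe that if $f$ is operator monotone on $(0,\infty)$ then $g:=f\circ\psi^{-1}$ is operator monotone on $(-1,1)$, so Theorem \ref{T-1.8} yields a unique finite positive measure $m$ on $[-1,1]$ with $g(t)=g(t_0)+\int_{[-1,1]}\frac{t-t_0}{(1-\kappa t)(1-\kappa t_0)}\,dm(\kappa)$ for every $t_0\in(-1,1)$, and crucially $m$ does not depend on $t_0$.

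Next I would substitute $t=\psi(x)$, $t_0=\psi(\alpha)$ and split the integral according to whether $\kappa=1$ or $\kappa\in[-1,1)$. A direct computation gives $t-t_0=\frac{2(x-\alpha)}{(x+1)(\alpha+1)}$ and, for $\kappa\ne1$, $1-\kappa t=\frac{(1-\kappa)(x+\lambda)}{x+1}$ with $\lambda:=\psi^{-1}(\kappa)=\frac{1+\kappa}{1-\kappa}\in[0,\infty)$; hence $\frac{t-t_0}{(1-\kappa t)(1-\kappa t_0)}=\frac{2(x-\alpha)}{(1-\kappa)^2(x+\lambda)(\alpha+\lambda)}$ for $\kappa<1$, while at $\kappa=1$ the same quantity equals $\frac{x-\alpha}{2}$. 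Setting $\gamma:=\frac12 m(\{1\})\ge0$ and letting $\mu$ be the push-forward of $\frac{2}{(1-\kappa)^2}\,dm(\kappa)$ on $[-1,1)$ under $\kappa\mapsto\frac{1+\kappa}{1-\kappa}$, this produces exactly \eqref{eq-1.8}. Since $1+\lambda=\frac{2}{1-\kappa}$, one has $\frac{d\mu(\lambda)}{(1+\lambda)^2}=\frac12\,dm(\kappa)$, so $\int_{[0,\infty)}(1+\lambda)^{-2}\,d\mu(\lambda)=\frac12 m([-1,1))<\infty$, and the independence of $m$ from $t_0$ transfers to independence of $\gamma,\mu$ from $\alpha$.

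For uniqueness I would reverse the construction: from any representation \eqref{eq-1.8} define a finite positive measure $m$ on $[-1,1]$ by placing mass $2\gamma$ at $\kappa=1$ and letting $m|_{[-1,1)}$ be the push-forward of $\frac{2}{(1+\lambda)^2}\,d\mu(\lambda)$ under $\lambda\mapsto\frac{\lambda-1}{\lambda+1}$ (finite by the integrability hypothesis); running the computation above backwards shows $m$ represents $g=f\circ\psi^{-1}$ as in \eqref{eq-1.5}, and the uniqueness clause of Theorem \ref{T-1.8} forces $m$, hence $\gamma$ and $\mu$, to be unique. I do not expect a real analytic obstacle here; the only point requiring care is the bookkeeping of the change of variables, and in particular recognizing that the endpoint mass of $m$ at $\kappa=1$ (corresponding to $x=\infty$) produces precisely the linear term $\gamma(x-\alpha)$ while a mass of $m$ at $\kappa=-1$ becomes an atom of $\mu$ at $\lambda=0$, i.e.\ a term of the form $-c/x$. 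Alternatively, one could bypass Theorem \ref{T-1.8} and imitate its proof directly, starting from the standard integral representation of operator monotone functions on $(0,\infty)$ (as in \cite[Sect.\ V.4]{Bh}) with base point $1$ and then using the operator monotone automorphisms of $(0,\infty)$ to eliminate the dependence on the base point.
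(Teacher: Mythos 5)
Your proof is correct; the computations ($t-t_0=\tfrac{2(x-\alpha)}{(x+1)(\alpha+1)}$, $1-\kappa t=\tfrac{(1-\kappa)(x+\lambda)}{x+1}$, the Jacobian factor $\tfrac{2}{(1-\kappa)^2}=\tfrac{(1+\lambda)^2}{2}$, and the identification of the mass at $\kappa=1$ with the linear term) all check out, and the reverse construction does deliver uniqueness. The route is close in spirit to the paper's but differs in one structurally meaningful way. The paper does \emph{not} invoke Theorem \ref{T-1.8} as a black box: it uses the $\alpha$-dependent family $\psi_\alpha(t)=\alpha(1+t)/(1-t)$, applies the classical base-point-$0$ representation \cite[V.4.5]{Bh} to $f\circ\psi_\alpha$ for each $\alpha$ separately, obtains $\gamma_\alpha,\mu_\alpha$, and then proves independence of $\alpha$ by inserting the partial-fraction identity $\tfrac{x-\alpha}{x+\lambda}=\tfrac{\beta-\alpha}{\beta+\lambda}+\tfrac{\alpha+\lambda}{\beta+\lambda}\cdot\tfrac{x-\beta}{x+\lambda}$ and appealing to uniqueness at $\beta$ --- i.e., it re-runs the same two-step scheme as the proof of Theorem \ref{T-1.8}. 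You instead fix the single map $\psi^{-1}(t)=(1+t)/(1-t)$ (which is the paper's $\psi_1$) and pull the full strength of Theorem \ref{T-1.8}, including its built-in base-point independence, through the change of variables; the independence of $\gamma$ and $\mu$ from $\alpha$ then comes for free because the measure $m$ representing $f\circ\psi^{-1}$ is already universal. Your version is more economical (no second independence argument), at the cost of making Theorem \ref{T-1.10} logically dependent on the refined Theorem \ref{T-1.8} rather than only on the classical representation; the paper's version keeps the two theorems parallel and independent of each other. Your closing observation that a mass of $m$ at $\kappa=-1$ becomes an atom of $\mu$ at $\lambda=0$ (a $-c/x$ term) is a correct and useful sanity check.
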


\begin{proof}
For each $\alpha\in(0,\infty)$ consider the transformation $\psi_\alpha$ from
$[-1,1)$ onto $[0,\infty)$ defined by
$$
\psi_\alpha(t):={\alpha(1+t)\over1-t},\qquad t\in[-1,1),
$$
which is operator monotone on $[-1,1)$. Representing $f\circ\psi_\alpha$ as in
\eqref{eq-1.6} and defining $\gamma_\alpha\ge0$ and a positive measure $\mu_\alpha$ on
$[0,\infty)$ by
$$
\gamma_\alpha:={m_\alpha(\{1\})\over2\alpha},\qquad
d\mu_\alpha(\lambda):={\alpha+\lambda\over1-\psi_\alpha^{-1}(\lambda)}
\,dm_\alpha(\psi_\alpha^{-1}(\lambda)),
$$
we have the integral expression of $f$. The proof remaining is similar to that of
Theorem \ref{T-1.8}, so the details are omitted.
\end{proof}

From \eqref{eq-1.8} we have
$$
f'(\alpha)=\gamma+\int_{[0,\infty)}{1\over(\alpha+\lambda)^2}\,d\mu(\lambda),
\qquad\alpha\in(0,\infty),
$$
and hence $\gamma=\lim_{\alpha\to\infty}f'(\alpha)$.

\section{Differentiability properties of matrix $k$-tone functions}\label{sec-2}

It is well known \cite{Lo,Do} that if $f$ is $n$-monotone (or
matrix $1$-tone in our terminology) on $(a,b)$, then it
is $C^{2n-3}$ on $(a,b)$ and $f^{(2n-3)}$ is convex there. Also, a
primary result of \cite{Kr} is that if $f$ is conditionally $2$-convex
(or matrix $2$-tone), then it is $C^2$ there. The main aim of this section is
to prove the next theorem extending the above results to matrix $k$-tone functions
of order $n$ for general $k$ and $n$. In particular, when $k=2$, the theorem shows
differentiability results for $n$-convex functions. It seems that assertion
(a) is new even in this particular case where $k=2$ and $n>2$. Results in \cite{Lo,Kr}
say that when $k=1$ and $k=2$ property (c) is not only necessary but also sufficient for
$f$ to be matrix $k$-tone of order $n$. Also, see \cite{Do,HT1,HT2} for property (d)
when $k=1,2$.

\begin{thm}\label{T-2.1}
Let $k,n\in\bN$ and assume that a real function $f$ on $(a,b)$ is matrix $k$-tone of order
$n$. Then the following properties {\rm(a)}--{\rm(d)} hold:
\begin{itemize}
\item[\rm(a)] $f$ is $C^{2n+k-4}$ on $(a,b)$ if $k\ge2$ or $n\ge2$.
\item[\rm(b)] The following functions are convex on $(a,b)$:
$$
\begin{cases}
f',f^{(3)},\dots,f^{(2n-3)} & \text{if $k=1$ and $n\ge2$}, \\
f^{(k-2)},f^{(k)},\dots,f^{(2n+k-4)} & \text{if $k\ge2$ and $n\ge 1$}.
\end{cases}
$$
\item[\rm(c)] The matrix
$$
\Biggl[f^{[k]}(x_i,x_j,\underbrace{x_1,\dots,x_1}_{k-1})\Biggr]_{i,j=1}^n
$$
is positive semidefinite for any choice of $x_1,\dots,x_n$ from $(a,b)$ if $n\ge2$.
\item[\rm(d)] The matrix
$$
\Biggl[{f^{(i+j+k)}(x)\over(i+j+k)!}\Biggr]_{i,j=0}^{n-1}
$$
exists and is positive semidefinite for almost every $x\in(a,b)$.
\end{itemize}
\end{thm}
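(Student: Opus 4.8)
The plan is to run a Löwner--Kraus type bootstrap on the matrix order $n$, the base cases being classical one-variable facts. For $n=1$, matrix $k$-tonicity of order $1$ says precisely that all numerical $k$th divided differences of $f$ are $\ge0$; the classical theory of $k$-convex functions then gives $f\in C^{k-2}$ with $f^{(k-2)}$ convex when $k\ge2$ --- this is (a) and (b) for $n=1$ --- while (d) reduces to $f^{(k)}\ge0$ a.e.\ (true since $f^{(k-2)}$ is convex) and (c) is empty. The case $k=1$, $n\ge2$ is Löwner's theorem. So assume henceforth $n\ge2$ and that the statement holds for order $n-1$; in particular $f\in C^{k-2}$ always, and $f\in C^k$ (indeed $C^{2n+k-6}$) once $n\ge3$.

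The core step is to extract the Loewner-type positivity (c) from the definition. Fix $x_1,\dots,x_n\in(a,b)$, put $A:=\diag(x_1,\dots,x_n)\in\bM_n^{sa}(a,b)$, and for $v\in\bR^n$ and small $\eps>0$ apply the defining inequality to $A\le B:=A+\eps vv^*$. Writing $X_l=A+t_l\eps vv^*$, using Lemma \ref{L-1.1}, and Taylor-expanding each $f(X_l)$ in $\eps$ (legitimate once $f$ is $C^k$ near $\mathrm{spec}(A)$), the terms of order $\eps^m$ with $m<k$ drop out because $\sum_l t_l^m\prod_{j\ne l}(t_l-t_j)^{-1}$ is the $k$th divided difference of $t\mapsto t^m$, which vanishes for $m<k$ and equals $1$ for $m=k$; hence $f^{[k]}(A,B;t_0,\dots,t_k)=(k!)^{-1}\eps^k D^kf(A)(vv^*,\dots,vv^*)+o(\eps^k)$, and dividing by $\eps^k$ and letting $\eps\to0^+$ gives $D^kf(A)(vv^*,\dots,vv^*)\ge0$ for every $v$. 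Expanding this $k$th Fréchet derivative through the standard formula that writes $D^kf(A)(X,\dots,X)$ as a sum over index paths of $k$th divided differences of $f$ at the eigenvalues of $A$, one obtains $\bigl[\sum_{i_1,\dots,i_{k-1}}f^{[k]}(x_i,x_{i_1},\dots,x_{i_{k-1}},x_j)\,v_{i_1}^2\cdots v_{i_{k-1}}^2\bigr]_{i,j=1}^n\succeq0$ for all $v$; taking $v_1=1$ and letting the other coordinates of $v$ tend to $0$ isolates the term $i_1=\dots=i_{k-1}=1$ and leaves $\bigl[f^{[k]}(x_i,x_j,x_1,\dots,x_1)\bigr]_{i,j=1}^n\succeq0$, which is (c).

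From (c) I would reach (d) and the missing smoothness by a coalescence argument in the spirit of Löwner's proof. Take $x\in(a,b)$ and, in (c), let $x_1=x$ and $x_i=x+s_i$ with $s_2,\dots,s_n$ small, distinct and nonzero; at a point where $f$ has enough derivatives one has $f^{[k]}(x+s,x+t,x,\dots,x)=\sum_{a,b\ge0}\frac{f^{(k+a+b)}(x)}{(k+a+b)!}s^at^b$, so the matrix in (c) factors as $U\,\bigl[\frac{f^{(k+a+b)}(x)}{(k+a+b)!}\bigr]_{a,b=0}^{n-1}\,U^\top+(\text{higher order})$ with $U$ an invertible Vandermonde matrix. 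Since this holds for all small $s_i$, a Vandermonde/moment-matrix argument --- the part closest to classical Löwner theory --- forces, for a.e.\ $x$, the existence of $f^{(2n+k-2)}(x)$ and the positive semidefiniteness of $\bigl[f^{(i+j+k)}(x)/(i+j+k)!\bigr]_{i,j=0}^{n-1}$, which is (d); the a.e.\ existence of the top derivative, combined with the regularity already in hand and the continuity forced by this Hankel positivity, upgrades $f$ to $C^{2n+k-4}$, giving (a). Convexity of $f^{(2n+k-4)}$ follows from a $2\times2$ (or $3\times3$) minor of (c) exactly as in the classical argument, and the remaining convexity claims in (b) are automatic because a matrix $k$-tone function of order $n$ is matrix $k$-tone of every order $m\le n$, so $f^{(2m+k-4)}$ is convex for each such $m$ with $2m+k-4\ge0$ (for $k=1$ this is the classical list $f',f^{(3)},\dots,f^{(2n-3)}$).

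The main obstacle is the base of the induction at $n=2$, which for $k=2$ is Kraus's theorem: there the inductive hypothesis supplies only $f\in C^{k-2}$, not enough to invoke Proposition \ref{P-1.3} or the $k$th Fréchet derivative formula, so one must rerun the perturbation argument using finite divided differences on $2\times2$ blocks and control their continuous extensions by hand, as Kraus did for $k=2$. A secondary technical point is the rigorous passage from the finite positivity (c) to the genuine continuity --- not merely a.e.\ existence --- of $f^{(2n+k-4)}$, which requires the usual care with convex functions and their one-sided derivatives.
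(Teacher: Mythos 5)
Your skeleton --- rank-one perturbation of a diagonal matrix to extract (c), coalescence of the nodes to get the Hankel positivity (d), and an induction on the matrix order --- matches the paper's, and your derivations of (c) and (d) \emph{for a function already known to be smooth} essentially reproduce the paper's Lemmas \ref{L-2.2} and \ref{L-2.3}. But the two points you defer at the end are not side issues; they are the substance of the theorem (the paper notes that assertion (a) is new even for $k=2$, $n>2$), and your sketch lacks the device that makes them work. The paper never applies the Daleckii--Krein/Fr\'echet-derivative machinery to $f$ itself: it applies it to the mollification $f_\eps(x)=\int\phi(s)f(x-\eps s)\,ds$, which is $C^\infty$ and is still matrix $k$-tone of order $n$ on $(a+\eps,b-\eps)$ because divided differences commute with the convolution. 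This removes both of your obstacles at once --- there is no circularity at $n=2$ and no need to ``rerun the perturbation argument by hand as Kraus did.'' All positivity statements are established for $f_\eps$ and transferred to $f$ by a limit.

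The deeper gap is the upgrade from a.e.\ Hankel positivity to genuine $C^{2n+k-4}$ smoothness of $f$. The phrase ``the continuity forced by this Hankel positivity'' is not an argument: a.e.\ existence and nonnegativity of $f^{(2n+k-2)}$ does not by itself produce two additional continuous derivatives. The paper's mechanism is quantitative: the induction hypothesis at order $n$ gives convexity of $f^{(2n+k-4)}$, hence local uniform upper bounds on $f_\eps^{(2n+k-3)}$; Lemma \ref{L-2.5} converts an upper bound on a convex derivative into a two-sided bound, so $\{f_\eps^{(2n+k-3)}\}$ is uniformly equicontinuous and converges uniformly to a continuous limit, which must then be $f^{(2n+k-3)}$; a second round, using the convexity of $f_\eps^{(2n+k-2)}$ (read off from the diagonal entry of the order-$(n+1)$ Hankel matrix of $f_\eps$, not from a $2\times2$ minor of (c) as you suggest --- a $2\times2$ minor cannot reach the top derivative), yields $f\in C^{2n+k-2}$ together with convexity of $f^{(2n+k-2)}$. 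Without this equicontinuity step your induction does not close. Finally, the base case $n=1$, $k\ge2$ (a numerically $k$-tone function is $C^{k-2}$ with $f^{(k-2)}$ convex) is itself proved in the paper by a separate induction on $k$ (Lemma \ref{L-2.4}); citing it as classical is defensible, but it is part of the burden here and is used essentially in the bootstrap.
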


To prove the theorem we start with

\begin{lemma}\label{L-2.2}
Let $f$ be a $C^k$ function on $(a,b)$ satisfying the assumption of Theorem
\ref{T-2.1}. Then
$$
{d^k\over dt^k}\,f(A+tX)\Big|_{t=0}\ge0
$$
for every $A\in\bM_n^{sa}(a,b)$ and every $X\in\bM_n^+$.
\end{lemma}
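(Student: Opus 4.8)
The plan is to connect the $k$th derivative in the direction of a positive matrix $X$ to the matrix-valued divided difference $f^{[k]}(A,B;t_0,\dots,t_k)$ for a suitable pair $A\le B$, and then invoke the matrix $k$-tonicity hypothesis. Concretely, fix $A\in\bM_n^{sa}(a,b)$ and $X\in\bM_n^+$. For small $\eps>0$ set $B:=A+\eps X$; since $\bM_n^{sa}(a,b)$ is open and convex, we have $A,B\in\bM_n^{sa}(a,b)$ and $A\le B$ once $\eps$ is small enough. Writing $X_l=(1-t_l)A+t_lB=A+t_l\eps X$ for $0=t_0<t_1<\dots<t_k=1$, the matrix $k$-tonicity of $f$ gives $f^{[k]}(A,B;t_0,\dots,t_k)\ge0$. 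The idea is then to rescale the nodes: replace $t_l$ by $st_l$ for a small scaling factor $s\in(0,1]$, so the evaluation points become $A+st_l\eps X$, all still in $\bM_n^{sa}(a,b)$ and ordered, and Lemma \ref{L-1.5} (or Lemma \ref{L-1.1}'s symmetry together with the argument in Lemma \ref{L-1.5}) shows $f^{[k]}(A,B;st_0,st_1,\dots,st_k)\ge0$ for every such $s$. Letting $s\to0$ and using that $f$ is $C^k$, Proposition \ref{P-1.3} guarantees that this divided difference extends continuously to coincident nodes, and the standard fact about divided differences at coincident points yields
$$
\lim_{s\to0}f^{[k]}(A,B;st_0,\dots,st_k)
={1\over k!}\,{d^k\over dt^k}\,f(A+t\eps X)\Big|_{t=0}
={\eps^k\over k!}\,{d^k\over dt^k}\,f(A+tX)\Big|_{t=0}.
$$
Since each term in the limit is positive semidefinite and $\bM_n^+$ is closed, the limit is positive semidefinite, and dividing by the positive scalar $\eps^k/k!$ finishes the proof.

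To make the middle step rigorous I would apply the identity in Lemma \ref{L-1.1} expressing $f^{[k]}(A,B;st_0,\dots,st_k)$ as a linear combination of the matrices $f(A+st_l\eps X)$, $0\le l\le k$, with coefficients $1/\prod_{j\ne l}s(t_l-t_j)=s^{-k}/\prod_{j\ne l}(t_l-t_j)$. Thus $s^k f^{[k]}(A,B;st_0,\dots,st_k)=\sum_l f(A+st_l\eps X)/\prod_{j\ne l}(t_l-t_j)$, and the right-hand side, as a $C^k$ matrix-valued function of $s$ vanishing to order $k-1$ at $s=0$ together with all lower divided-difference cancellations, has the Taylor behaviour $\frac{\eps^k}{k!}\bigl(\frac{d^k}{dt^k}f(A+tX)|_{t=0}\bigr)s^k+o(s^k)$ — this is just the classical statement that the $k$th divided difference tends to the $k$th derivative over $k!$, applied entrywise (equivalently, applied to $\omega(f(\cdot))$ for every state $\omega$ as in the proof of Proposition \ref{P-1.3}). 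Dividing by $s^k$ and letting $s\to0$ recovers the displayed limit.

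The only genuine subtlety — and the place I expect to spend the most care — is the legitimacy of passing to the limit in $s$ while retaining the inequality, i.e.\ making sure the node-rescaling really does preserve matrix $k$-tonicity. For $s<1$ the rescaled nodes $0=st_0<st_1<\dots<st_k=s$ are \emph{distinct} but no longer span $[0,1]$ with endpoints $0$ and $1$; however, by Lemma \ref{L-1.5} matrix $k$-tonicity of order $n$ is equivalent to $f^{[k]}(A',B';u_0,\dots,u_k)\ge0$ for \emph{all} distinct $u_0,\dots,u_k\in[0,1]$ and all $A'\le B'$ in $\bM_n^{sa}(a,b)$, so the rescaled inequality holds verbatim with $A'=A$, $B'=B$. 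Everything else — openness of $\bM_n^{sa}(a,b)$ so that $B=A+\eps X$ is admissible, closedness of $\bM_n^+$ to preserve positivity in the limit, and the continuous extendability supplied by Proposition \ref{P-1.3} so that the limit exists and equals the derivative — is routine.
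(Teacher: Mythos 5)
Your proof is correct and follows essentially the same route as the paper: both use Lemma \ref{L-1.5} to get $f^{[k]}(A,B;\cdot)\ge0$ at arbitrary distinct nodes in $[0,1]$ and then let the nodes coalesce at $0$, identifying the limit with $\frac{1}{k!}\frac{d^k}{dt^k}f(A+tX)\big|_{t=0}$ via the coincident-node value of the divided difference (the paper does this by applying states $\omega$ and citing \cite[p.~6]{Do}, exactly as you indicate in your parenthetical remark). Your choice of $B=A+\eps X$ with nodes $st_l$ versus the paper's $B=A+X$ with $t_1,\dots,t_k\searrow0$ is only a cosmetic difference.
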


\begin{proof}
Let $A\in\bM_n^{sa}(a,b)$ and $X\in\bM_n^+$. We may assume that $A+X\in\bM_n^{sa}(a,b)$;
then a $\delta>0$ is chosen so that $A+tX\in\bM_n^{sa}(a,b)$ for
all $t\in(-\delta,1)$. For any state $\omega$ on $\bM_n$ define
$$
\phi_\omega(t):=\omega(f(A+tX)),\qquad t\in(-\delta,1),
$$
which is $C^k$ on $(-\delta,1)$ due to the $C^k$ assumption on $f$. When
$0=t_0<t_1<\dots<t_k<1$, we have as \eqref{eq-1.1},
$$
\phi_\omega^{[k]}(t_0,t_1,\dots,t_k)=\omega(f^{[k]}(A,A+X;t_0,t_1,\dots,t_k))\ge0
$$
by Lemma \ref{L-1.5}. Letting $t_l\searrow0$ for $1\le l\le k$ gives
$$
0\le\phi_\omega^{[k]}(0,0,\dots,0)={1\over k!}\,\phi_\omega^{(k)}(0)
={1\over k!}\,\omega\biggl({d^k\over dt^k}\,f(A+tX)\Big|_{t=0}\biggr)
$$
by \cite[p.\ 6]{Do} (also \cite[Lemma 2.2.4]{Hi}). The conclusion follows since the state
$\omega$ is arbitrary.
\end{proof}

We next prove properties (c) and (d) of the theorem under the additional assumption of $f$
being $C^\infty$.

\begin{lemma}\label{L-2.3}
Let $f$ be a $C^\infty$ function on $(a,b)$ satisfying the assumption of Theorem
\ref{T-2.1}. Then {\em(c)} holds including the case $n=1$ and {\rm(d)} holds
for every $x\in(a,b)$.
\end{lemma}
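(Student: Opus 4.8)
\emph{Proof proposal.} The plan is to deduce both (c) and (d) from Lemma \ref{L-2.2}, i.e.\ from the positivity of $D^kf(A)(X,\dots,X)=\frac{d^k}{dt^k}f(A+tX)|_{t=0}$ for $A\in\bM_n^{sa}(a,b)$ and $X\in\bM_n^+$, combined with the standard expression of $D^kf(A)$ at a diagonal matrix in terms of divided differences (see \cite{BV,Hi}) and a ``shrinking perturbation plus rescaled test vector'' device.

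For (c), the case $n=1$ is immediate: matrix $k$-tonicity of order $n$ trivially implies that of order $1$, so Lemma \ref{L-2.2} for $1\times1$ matrices gives $f^{(k)}(x_1)=\frac{d^k}{dt^k}f(x_1+t)|_{t=0}\ge0$. For $n\ge2$, since $f^{[k]}$ is continuous on $(a,b)^{k+1}$ (as $f\in C^\infty$), it suffices to treat distinct $x_1,\dots,x_n$. Put $A:=\diag(x_1,\dots,x_n)\in\bM_n^{sa}(a,b)$ and, for small $\eps>0$, $u_\eps:=e_1+\sqrt\eps\,(e_2+\dots+e_n)$ and $X_\eps:=u_\eps u_\eps^*\in\bM_n^+$, where $e_1,\dots,e_n$ is the standard basis. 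The divided-difference formula for $D^kf(A)(X_\eps,\dots,X_\eps)$ at the diagonal $A$, applied to the rank-one $X_\eps$, gives
$$
\bigl[D^kf(A)(X_\eps,\dots,X_\eps)\bigr]_{pq}
=k!\,(u_\eps)_p(u_\eps)_q\Bigl(f^{[k]}\bigl(x_p,x_q,\underbrace{x_1,\dots,x_1}_{k-1}\bigr)+O(\eps)\Bigr),
$$
because in the sum over intermediate indices only the all-$1$ term survives as $\eps\to0$. By Lemma \ref{L-2.2} this matrix is positive semidefinite; testing against $v\in\bC^n$ with $v_p:=w_p/(u_\eps)_p$ (so $v_p(u_\eps)_p=w_p$) and letting $\eps\to0$ yields $\sum_{p,q}\overline{w_p}w_q\,f^{[k]}(x_p,x_q,x_1,\dots,x_1)\ge0$ for all $w\in\bC^n$, which is (c).

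For (d), fix $x\in(a,b)$ and distinct reals $s_1,\dots,s_n$, and apply (c) to the points $x_i:=x+\eps s_i\in(a,b)$ for small $\eps>0$. Expanding the divided difference about $x$ to order $L:=2n-2$ (Hermite--Genocchi together with Taylor's formula for $f^{(k)}$, using $f\in C^\infty$) gives
$$
f^{[k]}\bigl(x+\eps s_i,x+\eps s_j,\underbrace{x+\eps s_1,\dots}_{k-1}\bigr)
=\sum_{m=0}^{L}\frac{f^{(k+m)}(x)}{(k+m)!}\,\eps^m\,h_m\bigl(s_i,s_j,\underbrace{s_1,\dots,s_1}_{k-1}\bigr)+O(\eps^{L+1}),
$$
with $h_m$ the complete homogeneous symmetric polynomial, which separates the $i,j$ dependence as $\sum_{p+q+r=m}\binom{r+k-2}{k-2}s_1^r s_i^p s_j^q$. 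Writing $\phi_p:=\sum_i v_i s_i^p$, the positivity from (c) becomes, after regrouping by $(p,q)$,
$$
0\le\sum_{p,q\ge0,\ p+q\le L}\eps^{p+q}\,\overline{\phi_p}\phi_q\Bigl(\tfrac{f^{(k+p+q)}(x)}{(k+p+q)!}+O(\eps)\Bigr)+O(\eps^{L+1}\|v\|^2).
$$
Given a target $c\in\bC^n$, solve the invertible Vandermonde system $\phi_p=\eps^{-p}c_p$ for $p=0,\dots,n-1$; then $\phi_p=O(\eps^{-(n-1)})$ for $p\ge n$, so $\eps^{p+q}\overline{\phi_p}\phi_q$ equals $\overline{c_p}c_q$ for $p,q\le n-1$ and is $O(\eps)$ whenever $p\ge n$ or $q\ge n$, while $\|v\|^2=O(\eps^{-2(n-1)})$ turns the last term into $O(\eps)$ since $L=2n-2$. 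Letting $\eps\to0$ leaves $\sum_{p,q=0}^{n-1}\overline{c_p}c_q\,f^{(k+p+q)}(x)/(k+p+q)!\ge0$, i.e.\ (d) at the arbitrary point $x$.

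The two expansion formulas are routine and standard. The step that requires care — and is the most likely place for a slip — is the bookkeeping in the rescaled-test-vector arguments: one must verify that the families $X_\eps$ (resp.\ $v=v(\eps)$) are chosen so that \emph{every} unwanted contribution is genuinely $o(1)$, which pins down the balance of powers of $\eps$ (hence $L=2n-2$) and relies on invertibility of the relevant Vandermonde matrix.
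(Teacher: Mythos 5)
Your proposal is correct and follows essentially the same route as the paper: for (c) it is the paper's argument with the rank-one $X=[\overline{\xi_i}\xi_j]$ specialized to $\xi_1=1$, $\xi_r=\sqrt\eps$ and the test vector rescaled by $1/(u_\eps)_p$, and for (d) it is the same collapse of the nodes to $x$, Taylor expansion of the $k$th divided difference to order $2n-2$, and Vandermonde-rescaled test vector with the identical power counting (the paper merely takes nodes $x+ih$ and phrases the expansion through $g^{[1]}$ with $g(t)=f^{[k-1]}(t,x,\dots,x)$). The bookkeeping you flag as delicate checks out, so no changes are needed.
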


\begin{proof}
(c)\enspace
For every $x_1,\dots,x_n\in(a,b)$ let $A:=\diag(x_1,\dots,x_n)$, the diagonal matrix with
diagonal entries $x_1,\dots,x_n$. According to Daleckii and Krein's derivative formula in
the matrix case (see \cite[Theorem 2.3.1]{Hi}), for every $X\in\bM_n^{sa}$ we have
\begin{equation}\label{eq-2.a}
{d^k\over dt^k}\,f(A+tX)\bigg|_{t=0}
=\Biggl[\sum_{r_1,\dots,r_{k-1}=1}^nk!f^{[k]}(x_i,x_{r_1},\dots,x_{r_{k-1}},x_j)
X_{ir_1}X_{r_1r_2}\cdots X_{r_{k-1}j}\Biggr]_{i,j=1}^n.
\end{equation}
For any $\xi_1,\dots,\xi_n\in\bC$ let
$X:=\bigl[\overline\xi_i\xi_j\bigr]_{i,j=1}^n\in\bM_n^+$. Lemma \ref{L-2.2} then 
implies that for any $\zeta_1,\dots,\zeta_n\in\bC$,
$$
\sum_{i,j=1}^n\,\sum_{r_1,\dots,r_{k-1}=1}^nf^{[k]}(x_i,x_{r_1},\dots,x_{r_{k-1}},x_j)
\overline\xi_i|\xi_{r_1}|^2\cdots|\xi_{r_{k-1}}|^2\xi_j\overline\zeta_i\zeta_j\ge0.
$$
We may replace above $\zeta_i$ with $\zeta_i/\xi_i$ under the assumption that $\xi_i\ne0$
for all $i$. Now let $\xi_1=1$ and $\xi_r\to0$ for $r\ne1$ to obtain
$$
\sum_{i,j=1}^nf^{[k]}(x_i,\underbrace{x_1,\dots,x_1}_{k-1},x_j)
\overline\zeta_i\zeta_j\ge0.
$$

(d)\enspace
For any fixed $x\in(a,b)$ define a $C^\infty$ function $g$ on $(a,b)$ by
$$
g(t):=f^{[k-1]}(t,x,\dots,x),\qquad t\in(a,b).
$$
It is plain to notice that
\begin{equation}\label{eq-2.1}
g(t)={1\over(t-x)^{k-1}}\Biggl\{
f(t)-\sum_{l=0}^{k-2}{f^{(l)}(x)\over l!}(t-x)^l\Biggr\},\qquad t\in(a,b),
\end{equation}
where $g(t)=f(t)$ for $k=1$. Set $\delta:=\min\{x-a,b-x\}/n$. For every $h\in(0,\delta)$
we then have
$$
G_h:=\bigl[g^{[1]}(x+ih,x+jh)\bigr]_{i,j=0}^{n-1}
=\bigl[f^{[k]}(x+ih,x+jh,x,\dots,x)\bigr]_{i,j=0}^{n-1}\ge0
$$
thanks to (c) proved above. By Taylor's theorem we expand $g^{[1]}(x+ih,x+jh)$ as
$$
g^{[1]}(x+ih,x+jh)=\sum_{m=0}^{2n-2}{g^{(m+1)}(x)\over(m+1)!}
\cdot{i^{m+1}-j^{m+1}\over i-j}\,h^m+o(h^{2n-2})
$$
with convention $(i^{m+1}-j^{m+1})/(i-j)=m+1$ if $i=j$. Therefore,
$$
G_h=\sum_{m=0}^{2n-2}{g^{(m+1)}(x)\over(m+1)!}
\Biggl(\sum_{l=0}^mu_l\otimes u_{m-l}\Biggr)h^m+o(h^{2n-2}),
$$
where $u_l:=\bigl(0^l,1^l,\dots,(n-1)^l\bigr)\in\bC^n$ and
$u_l\otimes u_{m-l}:=\bigl[i^lj^{m-l}\bigr]_{i,j=0}^{n-1}$ for $0\le l\le m\le2n-2$ (with
$0^0:=1$). For every $\zeta_0,\dots,\zeta_{n-1}\in\bC$, since $u_0,\dots,u_{n-1}$ are
linearly independent, there exists a $v\in\bC^n$ such that $\<u_l,v\>=\zeta_lh^{-l}$ for
$l=0,\dots,n-1$. Since $\<u_l,v\>=O\bigl(h^{-(n-1)}\bigr)$ if $l\ge n$, one can easily
verify that
$$
\<v,(u_l\otimes u_{m-l})v\>h^m=\overline{\<u_l,v\>}\<u_{m-l},v\>h^m=O(h)
$$
if $l\ge n$ or $m-l\ge n$. Therefore,
\begin{align*}
0\le\<v,G_hv\>&=\sum_{m=0}^{2n-2}{g^{(m+1)}(x)\over(m+1)!}
\Biggl(\sum_{0\le l\le n-1,\,0\le m-l\le n-1}\overline{\zeta_l}\zeta_{m-l}\Biggr)+O(h) \\
&=\sum_{i,j=0}^{n-1}{g^{(i+j+1)}(x)\over(i+j+1)!}\,\overline{\zeta_i}\zeta_j+O(h),
\end{align*}
and letting $h\searrow0$ yields that
$\bigl[g^{(i+j+1)}(x)/(i+j+1)!\bigr]_{i,j=0}^{n-1}\ge0$. It immediately follows from
\eqref{eq-2.1} that
$$
g^{(l+1)}(x)={l!f^{(l+k)}(x)\over(l+k)!},\qquad l=0,1,\dots
$$
and we have the conclusion.
\end{proof}

\begin{lemma}\label{L-2.4}
Let $k\in\bN$ with $k\ge2$ and assume that $f$ is $k$-tone on $(a,b)$. Then $f$ is
$C^{k-2}$ and $f^{(k-2)}$ is convex on $(a,b)$. That is, {\rm(a)} and {\rm(b)} of Theorem
\ref{T-2.1} hold when $n=1$.
\end{lemma}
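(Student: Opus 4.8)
The plan is to induct on $k$. The case $k=2$ is immediate from the discussion following Definition~\ref{D-1.4}: $f$ being $2$-tone means $f^{[2]}(x_0,x_1,x_2)\ge0$ for all distinct $x_0,x_1,x_2\in(a,b)$, i.e.\ $f$ is (numerically) convex on $(a,b)$; a convex function on an open interval is continuous, so $f\in C^{0}$ and $f^{(0)}=f$ is convex. So assume $k\ge3$ and that the lemma holds with $k-1$ in place of $k$, and let $f$ be $k$-tone on $(a,b)$. I will (i) produce an auxiliary convex function to get continuity of $f$, (ii) upgrade to $f\in C^1$, (iii) show that $f'$ is $(k-1)$-tone, and (iv) apply the induction hypothesis to $f'$.

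For (i) and (ii) I would fix $k-2$ distinct points $x_2,\dots,x_{k-1}\in(a,b)$ and set $u(x):=f^{[k-2]}(x,x_2,\dots,x_{k-1})$ on $(a,b)\setminus\{x_2,\dots,x_{k-1}\}$. Iterating the divided-difference recursion gives $u^{[2]}(z_0,z_1,z_2)=f^{[k]}(z_0,z_1,z_2,x_2,\dots,x_{k-1})\ge0$, so $u$ is $2$-tone, hence convex (thus continuous, with one-sided derivatives everywhere) on each component interval of its domain. By Proposition~\ref{P-1.7}, $f(x)=P_f(x;x_2,\dots,x_{k-1})+u(x)\prod_{j=2}^{k-1}(x-x_j)$ off the nodes; letting the nodes avoid any prescribed point yields continuity of $f$ on $(a,b)$, and gives, at any $x_0$ not among the nodes,
\[
D^+f(x_0)-D^-f(x_0)=\bigl(D^+u(x_0)-D^-u(x_0)\bigr)\prod_{j=2}^{k-1}(x_0-x_j),
\]
where the first factor on the right is $\ge0$. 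Here is the key point: the left side is intrinsic to $f$, while the sign of $\prod_j(x_0-x_j)$ is $(-1)^{\#\{j:\,x_j>x_0\}}$, which I can render even (all nodes below $x_0$) or odd (exactly one node above $x_0$, possible since $k-2\ge1$); comparing two such choices forces $D^+f(x_0)=D^-f(x_0)$. Hence $f$ is differentiable everywhere, and then the displayed identity (with $\prod_j(x_0-x_j)\ne0$) forces $u$ to be differentiable off the nodes; a convex function differentiable on an interval is $C^1$ there, so $u$, and hence $f=P_f+u\prod_j(x-x_j)$, is $C^1$ off the nodes — and varying the nodes, $f\in C^1(a,b)$.

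For (iii), from $f^{[k]}\ge0$ and the symmetry of divided differences one checks that $f^{[k-1]}$ is nondecreasing in each of its variables, so translating all $k$ arguments up by $\varepsilon>0$ and chaining these monotonicities gives $f^{[k-1]}(y_0+\varepsilon,\dots,y_{k-1}+\varepsilon)\ge f^{[k-1]}(y_0,\dots,y_{k-1})$ for distinct $y_i$ and small $\varepsilon$. Dividing by $\varepsilon$, the left-hand side is the $(k-1)$st divided difference of $x\mapsto(f(x+\varepsilon)-f(x))/\varepsilon$ at $y_0,\dots,y_{k-1}$; since $f\in C^1$ these difference quotients converge uniformly on compacta to $f'$, so letting $\varepsilon\to0^+$ gives $(f')^{[k-1]}(y_0,\dots,y_{k-1})\ge0$. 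Thus $f'$ is $(k-1)$-tone on $(a,b)$, and the induction hypothesis gives $f'\in C^{k-3}(a,b)$ with $(f')^{(k-3)}=f^{(k-2)}$ convex, which is the assertion.

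I expect the main obstacle to be step (ii): convexity of the auxiliary function $u$ only hands over one-sided derivatives, and passing from there to two-sided differentiability of $f$ at \emph{every} point — equivalently, ruling out corners of $f$ — is exactly where the freedom in the $k$-tonicity hypothesis to move the interpolation nodes past a given point (and the attendant parity bookkeeping) must be used. Steps (i) and (iii) are then routine manipulations of divided differences.
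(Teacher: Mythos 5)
Your proof is correct, but it takes a genuinely different route from the paper's. The paper also inducts on $k$, but distributes the work differently: from $(k+1)$-tonicity it first observes that $f^{[1]}(\cdot,c)$ is $k$-tone on $(a,c)$, which by induction yields only $f\in C^{k-2}$; the missing derivative is then recovered ``at the top'' by using the coordinate-wise monotonicity of $f^{[k]}$ to get a local Lipschitz bound on $f^{[k-1]}$, extracting $\theta(\alpha)=\lim_{x\to\alpha}f^{[k-1]}(x,\alpha,\dots,\alpha)$ together with a quantitative remainder estimate, and invoking a converse Taylor theorem; the convexity of $f^{(k-1)}$ is finally obtained by mollification. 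You instead gain the derivative ``at the bottom'': the interpolation identity $f=P_f+u\cdot\prod_j(x-x_j)$ with $u=f^{[k-2]}(\cdot,x_2,\dots,x_{k-1})$ convex, combined with the freedom to flip the sign of $\prod_j(x_0-x_j)$ by moving nodes across $x_0$, kills every corner of $f$, giving $f\in C^1$ directly; passing to $f'$, which you correctly show is $(k-1)$-tone via the translation monotonicity of $f^{[k-1]}$ and a difference-quotient limit at fixed distinct nodes, then delegates all higher regularity and the convexity assertion to the induction hypothesis. Your version avoids both the converse Taylor theorem and the regularization step, at the price of the parity bookkeeping in step (ii); the one ingredient the two arguments share is the coordinate-wise monotonicity of the top divided difference, which the paper uses for its Lipschitz estimate and you use in step (iii). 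The individual steps all check out: $u^{[2]}(z_0,z_1,z_2)=f^{[k]}(z_0,z_1,z_2,x_2,\dots,x_{k-1})$ by symmetry of divided differences, the one-sided product rule $D^{\pm}(uq)(x_0)=D^{\pm}u(x_0)\,q(x_0)+u(x_0)\,q'(x_0)$ (legitimate since $u$ is continuous at $x_0$ and $q$ is a polynomial), the fact that a convex function differentiable everywhere on an open interval is automatically $C^1$ there, and the convergence of the divided differences of the difference quotients to $(f')^{[k-1]}$, which needs only pointwise convergence at the finitely many nodes.
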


\begin{proof}
The proof is by induction on $k$. The case $k=2$ is obvious since 2-tonicity means
convexity. Assume that $f$ is $(k+1)$-tone on $(a,b)$. For any $c\in(a,b)$, since
$f^{[1]}(x,c)$ is $k$-tone on $(a,c)$, it follows from induction hypothesis that
$f^{[1]}(x,c)$ is $C^{k-2}$ on $(a,c)$. Hence $f$ is $C^{k-2}$ on $(a,b)$. If
$x_1,\dots,x_{k+1},y_1,\dots,y_{k+1}$ are distinct in $(a,b)$ and $x_i<y_i$ for all
$i=1,\dots,k+1$, then the $(k+1)$-tonicity of $f$ implies that
$f^{[k]}(x_1,\dots,x_{k+1})\le f^{[k]}(y_1,\dots,y_{k+1})$. For any $a'<b'$ in
$(a,b)$ choose $\alpha_1<\dots<\alpha_{k+1}$ in $(a,a')$ and
$\beta_1<\dots<\beta_{k+1}$ in $(b',b)$. The above inequality then implies that
$$
f^{[k]}(\alpha_1,\dots,\alpha_{k+1})\le f^{[k]}(x_1,\dots,x_{k+1})
\le f^{[k]}(\beta_1,\dots,\beta_{k+1})
$$
for every distinct $x_1,\dots,x_{k+1}\in(a',b')$. Hence there exists a $K>0$ (depending on
$a',b'$) such that $\big|f^{[k]}(x_1,\dots,x_{k+1})\big|\le K$ for all distinct
$x_1,\dots,x_{k+1}\in(a',b')$. This in turn implies that if $x_1,\dots,x_k,y_1,\dots,y_k$
are distinct in $(\alpha,\beta)$, then
\begin{equation}\label{eq-2.2}
\big|f^{[k-1]}(x_1,\dots,x_k)-f^{[k-1]}(y_1,\dots,y_k)\big|\le K\sum_{i=1}^k|x_i-y_i|.
\end{equation}
For every $\alpha,\beta,x,y\in(a',b')$ such that $x\ne\alpha$ and $y\ne\beta$, let
$x_1\to x$, $y_1\to y$, $x_2,\dots,x_k\to\alpha$, and $y_1,\dots,y_k\to\beta$ in
\eqref{eq-2.2} to obtain
\begin{equation}\label{eq-2.3}
\big|f^{[k-1]}(x,\alpha,\dots,\alpha)-f^{[k-1]}(y,\beta,\dots,\beta)\big|
\le K\{|x-y|+(k-1)|\alpha-\beta|\},
\end{equation}
where $f^{[k-1]}(x,\alpha,\dots,\alpha)$ is well defined as
$$
f^{[k-1]}(x,\alpha,\dots,\alpha)={1\over(x-\alpha)^{k-1}}
\Biggl\{f(x)-\sum_{l=0}^{k-2}{f^{(l)}(\alpha)\over l!}\,(x-\alpha)^l\Biggr\}
$$
due to the $C^{k-2}$ of $f$. By \eqref{eq-2.3} with $\beta=\alpha$ we have the limit
$$
\theta(\alpha):=\lim_{x\to\alpha}f^{[k-1]}(x,\alpha,\dots,\alpha),
\qquad\alpha\in(a',b').
$$
For every $\alpha,\beta\in(a',b')$, letting $x\to\alpha$ and $y\to\beta$ in \eqref{eq-2.3},
we have $|\theta(\alpha)-\theta(\beta)|\le Kk|\alpha-\beta|$ so that $\theta$ is continuous
on $(a',b')$. Furthermore, by \eqref{eq-2.3} we have
$$
\big|f^{[k-1]}(x,\alpha,\dots,\alpha)-\theta(\alpha)\big|\le K|x-\alpha|,
\qquad x,\alpha\in(a',b'),\ x\ne\alpha.
$$
Letting $r(x,\alpha):=f^{[k-1]}(x,\alpha,\dots,\alpha)-\theta(\alpha)$ with
$r(\alpha,\alpha)=0$ we write
\begin{equation}\label{eq-2.4}
f(x)=\sum_{l=0}^{k-2}{f^{(l)}(\alpha)\over l!}\,(x-\alpha)^l
+\theta(\alpha)(x-\alpha)^{k-1}+r(x,\alpha)(x-\alpha)^{k-1},
\quad x,\alpha\in(a',b').
\end{equation}
Since $f^{(l)}(\alpha)$ for $0\le l\le k-2$ and $\theta(\alpha)$ are continuous in
$\alpha\in(a',b')$ and $|r(x,\alpha)|\le K|x-\alpha|$ for $x,\alpha\in(a',b')$,
expression \eqref{eq-2.4} shows \cite[pp.\ 6--9]{AR} (also \cite[Lemma A.1.1]{Hi}) that
$f$ is $C^{k-1}$ on $(a',b')$ with $f^{(k-1)}(\alpha)=(k-1)!\,\theta(\alpha)$. The
$C^{k-1}$ of $f$ on $(a,b)$ follows since $a',b'$ are arbitrary.

To complete the induction procedure, it remains to prove that $f^{(k-1)}$ is convex on
$(a,b)$. To do so, we adopt a standard regularization technique (see \cite[pp.\ 11--13]{Do}
for example). Let $\phi(t)$ be a non-negative $C^\infty$ function on $\bR$ supported in
$[-1,1]$ such that $\int\phi(s)\,ds=1$. For every $\eps>0$ sufficiently small define
$$
f_\eps(x):={1\over\eps}\int_{x-\eps}^{x+\eps}\phi\biggl({x-s\over\eps}\biggr)f(s)\,ds
=\int_{-1}^1\phi(s)f(x-\eps s)\,ds,\qquad x\in(a+\eps,b-\eps),
$$
which is a $C^\infty$ function on $(a+\eps,b-\eps)$. For every distinct $x_1,\dots,x_{k+2}$
in $(a+\eps,b-\eps)$ one can easily see that
$$
f_\eps^{[k+1]}(x_1,\dots,x_{k+2})
=\int_{-1}^1\phi(s)f^{[k+1]}(x_1-\eps s,\dots,x_{k+2}-\eps s)\,ds\ge0.
$$
Hence $f_\eps^{(k+1)}(x)\ge0$ for all $x\in(a+\eps,b-\eps)$ so that $f_\eps^{(k-1)}$ is
convex on $(a+\eps,b-\eps)$. Since $f$ is $C^{k-1}$ on $(a,b)$ as already proved,
$f_\eps^{(k-1)}(x)\to f^{(k-1)}(x)$ as $\eps\searrow0$ for all $x\in(a,b)$ and the
convexity of $f^{(k-1)}$ follows.
\end{proof}

\begin{lemma}\label{L-2.5}
Let $\{f_m\}$ be a sequence of $C^1$ functions on $[a,b]$ such that the finite limits
$\lim_{m\to\infty}f_m(a)$ and $\lim_{m\to\infty}f_m(b)$ exist. Assume that $f_m'$ is convex
on $[a,b]$ for every $m$ and there exists a $K>0$ such that $f_m'(x)\le K$ for all
$x\in[a,b]$ and all $m$. Then $\{f_m'\}$ is uniformly bounded on $[a,b]$ and hence
$\{f_m\}$ is uniformly equicontinuous on $[a,b]$.
\end{lemma}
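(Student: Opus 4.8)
The plan is to reduce everything to the single claim that $\{f_m'\}$ is uniformly bounded \emph{below}: the upper bound $f_m'\le K$ is given, and once both bounds are in hand the estimate $|f_m(x)-f_m(y)|=\bigl|\int_y^x f_m'\bigr|\le M|x-y|$ shows that $\{f_m\}$ is uniformly Lipschitz, hence uniformly equicontinuous. So write $g_m:=f_m'$; each $g_m$ is continuous and convex on $[a,b]$ with $g_m\le K$. The hypothesis that $\lim_m f_m(a)$ and $\lim_m f_m(b)$ exist is used only to conclude that $\int_a^b g_m=f_m(b)-f_m(a)$ stays bounded, say $\bigl|\int_a^b g_m\bigr|\le C$ for all $m$.

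To prove the lower bound I would argue by contradiction. If $\{g_m\}$ is not uniformly bounded below, then, passing to a subsequence, there are $c_m\in[a,b]$ with $g_m(c_m)=\min_{[a,b]}g_m=:-M_m$ and $M_m\to\infty$ (the minimum is attained since $g_m$ is continuous on a compact interval). Let $I_m$ be whichever of $[a,c_m]$, $[c_m,b]$ is the longer interval, so that $|I_m|\ge(b-a)/2$, and let $J_m$ be the complementary one. On $I_m$ the convex function $g_m$ lies below the chord joining the values at the two endpoints of $I_m$, one of which equals $-M_m$ and the other is $\le K$; since the integral of that chord is the trapezoidal quantity $\tfrac{|I_m|}{2}\bigl(g_m(\text{left})+g_m(\text{right})\bigr)$, we get $\int_{I_m}g_m\le\tfrac{|I_m|}{2}(K-M_m)\le\tfrac{(b-a)}{4}(K-M_m)$ once $M_m>K$. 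On $J_m$ we use only $g_m\le K$, so $\int_{J_m}g_m\le K(b-a)$. Adding these, $\int_a^b g_m\le K(b-a)+\tfrac{(b-a)}{4}(K-M_m)\to-\infty$, contradicting $\bigl|\int_a^b g_m\bigr|\le C$. Hence there is $M>0$ with $|f_m'|\le M$ on $[a,b]$ for all $m$, and the uniform Lipschitz bound above finishes the proof.

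The one non-mechanical point is the passage from a single very negative value of $g_m$ to a divergent integral; it rests on the elementary facts that a convex function on an interval lies below the chord through its endpoint values, that the integral of that chord is the trapezoidal average, and that of the two pieces $[a,c_m]$, $[c_m,b]$ at least one has length $\ge(b-a)/2$. Everything else (attainment of the minimum, the final Lipschitz-to-equicontinuity step) is routine, so I expect this convexity-to-integral estimate to be the only step requiring any care.
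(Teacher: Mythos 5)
Your proof is correct and rests on the same key mechanism as the paper's: a convex function bounded above by $K$ lies below its chords, so its integral over $[a,b]$ controls its minimum, and the boundedness of $\int_a^b f_m'=f_m(b)-f_m(a)$ then forces a uniform lower bound on $f_m'$. The paper merely packages this more directly (normalizing $K=0$ and deducing the pointwise inequality $f_m(b)-f_m(a)\le\frac{b-a}{2}f_m'(x)$ for every $x$) instead of arguing by contradiction at the minimizer, but the two arguments are essentially identical.
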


\begin{proof}
We can assume that $K=0$. Since $f_m'\le0$ is convex on $[a,b]$, we have
$$
f_m(b)-f_m(a)=\int_a^bf_m'(t)\,dt\le\biggl({b-a\over2}\biggr)f_m'(x)
$$
for all $x\in[a,b]$. Hence $\{f_m'\}$ is also uniformly bounded below on $[a,b]$, and so
$\{f_m\}$ is uniformly equicontinuous there.
\end{proof}

We are now in a position to complete the proof of the theorem.

\bigskip\noindent
{\it Proof of Theorem \ref{T-2.1}.}\enspace
The theorem when $k=1$ and $n\ge2$ is L\"owner's result \cite[p.\ 76]{Do}, so we may assume
that $k\ge2$. Let us prove (a) and (b) for all $k\ge2$ by induction on $n$. The initial
case $n=1$ is Lemma \ref{L-2.4}. Let $n\in\bN$ and assume that $f$ is matrix $k$-tone of
order $n+1$ on $(a,b)$. Since $f$ is matrix $k$-tone of order $n$, it follows from
induction hypothesis that $f$ is $C^{2n+k-4}$ and $f^{(k-2)}$, $f^k$, $\dots$,
$f^{(2n+k-4)}$ are convex on $(a,b)$. Define regularizations $f_\eps$ of $f$ for small
$\eps>0$ as in the last part of the proof of Lemma \ref{L-2.4}. For every
$A,B\in\bM_{n+1}^{sa}(a+\eps,b-\eps)$ with $A\le B$ one can easily see that if
$0=\lambda_0<\lambda_1<\dots<\lambda_k=1$ then
$$
f_\eps^{[k]}(A,B;\lambda_0,\dots,\lambda_k)
=\int_{-1}^1\phi(s)f^{[k]}(A-\eps sI,B-\eps sI;\lambda_0,\dots,\lambda_k)\,ds\ge0.
$$
This means that $f_\eps$ is matrix $k$-tone of order $n+1$ on $(a+\eps,b-\eps)$. So one
can apply (d) of Lemma \ref{L-2.3} to $f_\eps$ with $n+1$ in place of $n$
to see that $f_\eps^{(2n+k)}(x)\ge0$ for all $x\in(a+\eps,b-\eps)$ and hence
$f_\eps^{(2n+k-2)}$ is convex on $(a+\eps,b-\eps)$. Since $f^{(2n+k-4)}$ is convex on
$(a,b)$ as already mentioned, $f^{(2n+k-3)}(x)$ exists and hence
$f_\eps^{(2n+k-3)}(x)\to f^{(2n+k-3)}(x)$ as $\eps\searrow0$ for all $x\in(a,b)$ except
at most countable points. Choose $a'<b'$ in $(a,b)$, arbitrarily near $a,b$ respectively,
at which $f^{(2n+k-4)}$ is differentiable. Since $\bigl\{f_\eps^{(2n+k-4)}\bigr\}$
(for $\eps>0$ sufficiently small) is uniformly bounded above on $[a',b']$, we see by
Lemma \ref{L-2.5} that $\bigl\{f_{\eps}^{(2n+k-3)}\bigr\}$ (for small $\eps>0$) is
uniformly equicontinuous on $[a',b']$. Hence there exists a continuous function $\ffi$ on
$[a',b']$ such that $f_{\eps}^{(2n+k-3)}(x)\to\ffi(x)$ uniformly on $[a',b']$. Note that
$f_{\eps}^{(2n+k-4)}(x)\to f^{(2n+k-4)}(x)$ for all $x\in(a,b)$. Hence $f^{(2n+k-4)}$ is
differentiable with $f^{(2n+k-3)}(x)=\ffi(x)$ on $(a',b')$, and therefore, $f$ is
$C^{2n+k-3}$ on $(a,b)$.

Since $f^{(2n+k-4)}$ is convex on $(a,b)$, $f^{(2n+k-3)}$ is non-decreasing on $(a,b)$ and
so differentiable almost everywhere on $(a,b)$ by Lebesgue's theorem. Let $D$ be the set
of $x\in(a,b)$ at which $f^{(2n+k-3)}$ is differentiable. Note that
$f_\eps^{(2n+k-3)}(x)\to f^{(2n+k-3)}(x)$ as $\eps\searrow0$ for all $x\in(a,b)$ and
$f_\eps^{(2n+k-2)}(x)\to f^{(2n+k-2)}(x)$ for all $x\in D$. Choose
$\alpha_1<\alpha_2<a'<b'<\beta_1<\beta_2$ from $D$. Since $f_\eps^{(2n+k-2)}$ is convex on
$(a+\eps,b-\eps)$, we have
$$
{f_{\eps}^{(2n+k-2)}(\alpha_1)-f_{\eps}^{(2n+k-2)}(\alpha_2)\over\alpha_1-\alpha_2}
\le f_{\eps}^{(2n+k-1)}(x)\le
{f_{\eps}^{(2n+k-2)}(\beta_1)-f_{\eps}^{(2n+k-2)}(\beta_2)\over\beta_1-\beta_2}
$$
for all $x\in[a',b']$, which implies that
$\bigl\{f_{\eps}^{(2n+k-2)}\bigr\}$ is uniformly equicontinuous on $[a',b']$. Hence
there exists a continuous function $\psi$ on $[a',b']$ such that
$f_{\eps}^{(2n+k-2)}(x)\to\psi(x)$ uniformly on $[a',b']$. This shows that $f^{(2n+k-3)}$
is differentiable with $f^{(2n+k-2)}(x)=\psi(x)$ on $(a',b')$. Therefore, $f$ is
$C^{2n+k-2}$ on $(a,b)$. Moreover, $f^{(2n+k-2)}$ is convex on $(a,b)$ since
$f_\eps^{(2n+k-2)}(x)\to f^{(2n+k-2)}(x)$ for all $x\in(a,b)$.

Next, we prove (c) and (d). Let $f$ be matrix $k$-tone of order $n$ on $(a,b)$ and
$f_\eps$ be the regularization of $f$ as above. Then by Lemma \ref{L-2.3}, (c) and (d)
hold for $f_\eps$ on $(a+\eps,b-\eps)$. When $n\ge2$, since $k\le2n+k-4$, $f$ is
$C^k$ on $(a,b)$ by assertion (a), so (c) for $f$ follows by taking the limit of (c)
for $f_\eps$. Since $f^{(2n+k-4)}$ is convex on $(a,b)$, the same argument as above
shows that $f^{(2n+k-2)}(x)$ exists for almost every $x\in(a,b)$ and that
$f_\eps^{(2n+k-3)}(x)\to f^{(2n+k-3)}(x)$ and $f_\eps^{(2n+k-2)}(x)\to f^{(2n+k-2)}(x)$
for almost every $x\in(a,b)$. Hence (d) for $f$ follows as the almost everywhere limit
of (d) for $f_\eps$.\qed

\bigskip
As immediately seen from \cite[p.\ 6]{Do}, a real $C^k$ function on $(a,b)$ is $k$-tone
if and only if $f^{(k)}(x)\ge0$ for all $x\in(a,b)$. This can be extended to
the differential calculus in $n\times n$ matrices as follows: 

\begin{prop}\label{P-2.6}
Let $k,n\in\bN$ with $n\ge2$ and $f$ be a real function on $(a,b)$. Then the following
conditions are equivalent:
\begin{itemize}
\item[\rm(i)] $f$ is matrix $k$-tone of order $n$ on $(a,b)$;
\item[\rm(ii)] $f$ is $C^k$ on $(a,b)$ and
$$
{d^k\over dt^k}\,f(A+tX)\Big|_{t=0}\ge0
$$
for every $A\in\bM_n^{sa}(a,b)$ and every $X\in\bM_n^+$;
\item[\rm(iii)] $f$ is $C^k$ on $(a,b)$ and
$$
f(A+X)\ge\sum_{l=0}^{k-1}{1\over l!}\,D^lf(A)(\underbrace{X,\dots,X}_l)
$$
for every $A\in\bM_n^{sa}(a,b)$ and every $X\in\bM_n^+$ such that $A+X\in\bM_n^{sa}(a,b)$,
where $D^lf(A)$ is the $l$th Fr\'echet derivative of $A\mapsto f(A)$ on $\bM_n^{sa}(a,b)$.
\end{itemize}
\end{prop}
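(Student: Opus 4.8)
The plan is to deduce the proposition from Theorem \ref{T-2.1}, Lemma \ref{L-2.2}, and elementary one-variable Taylor analysis of the $\bM_n^{sa}$-valued curve $t\mapsto f(A+tX)$; concretely I would establish (i)$\Leftrightarrow$(ii) and (ii)$\Leftrightarrow$(iii).

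For (i)$\Rightarrow$(ii): since $n\ge2$ we have $2n+k-4\ge k$, so Theorem \ref{T-2.1}(a) shows that $f$ is $C^k$ on $(a,b)$, and then Lemma \ref{L-2.2} gives at once $\tfrac{d^k}{dt^k}f(A+tX)\big|_{t=0}\ge0$ for all $A\in\bM_n^{sa}(a,b)$ and $X\in\bM_n^+$. For (ii)$\Rightarrow$(i): fix $A\le B$ in $\bM_n^{sa}(a,b)$, distinct $0=t_0<\dots<t_k=1$, and a state $\omega$ on $\bM_n$; then $\phi_\omega(t):=\omega(f((1-t)A+tB))$ is $C^k$ near $[0,1]$ exactly as in the proof of Lemma \ref{L-2.2}. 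For $s\in[0,1]$, putting $A_s:=(1-s)A+sB\in\bM_n^{sa}(a,b)$ and $X:=B-A\in\bM_n^+$ one has $(1-t)A+tB=A_s+(t-s)X$, hence $\phi_\omega^{(k)}(s)=\omega\bigl(\tfrac{d^k}{du^k}f(A_s+uX)\big|_{u=0}\bigr)\ge0$ by (ii). Thus $\phi_\omega^{(k)}\ge0$ throughout $[0,1]$, so by the mean-value property of divided differences (\cite[p.\ 6]{Do}) $\phi_\omega^{[k]}(t_0,\dots,t_k)\ge0$; by \eqref{eq-1.1} this says $\omega(f^{[k]}(A,B;t_0,\dots,t_k))\ge0$, and since $\omega$ is arbitrary, $f^{[k]}(A,B;t_0,\dots,t_k)\ge0$. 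Lemma \ref{L-1.5} then yields matrix $k$-tonicity of order $n$.

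For (ii)$\Leftrightarrow$(iii) I would work with $g(t):=f(A+tX)$, using $D^lf(A)(X,\dots,X)=g^{(l)}(0)$. Assuming (ii): if $A+X\in\bM_n^{sa}(a,b)$ then, by convexity of $\bM_n^{sa}(a,b)$, $A+sX\in\bM_n^{sa}(a,b)$ for all $s\in[0,1]$, so (ii) gives $g^{(k)}(s)\ge0$ on $[0,1]$; the integral form of Taylor's formula, $g(1)=\sum_{l=0}^{k-1}\tfrac{g^{(l)}(0)}{l!}+\tfrac1{(k-1)!}\int_0^1(1-s)^{k-1}g^{(k)}(s)\,ds$, then yields (iii) because the integrand is positive semidefinite. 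Conversely, assuming (iii), apply it with $tX$ in place of $X$ for small $t>0$ (legitimate since $\bM_n^{sa}(a,b)$ is open) to obtain $g(t)-\sum_{l=0}^{k-1}\tfrac{g^{(l)}(0)}{l!}t^l\ge0$; by the Peano form this expression equals $\tfrac{g^{(k)}(0)}{k!}t^k+o(t^k)$, so dividing by $t^k$ and letting $t\searrow0$ gives $g^{(k)}(0)\ge0$ since $\bM_n^+$ is closed, which is (ii). I do not anticipate a genuine obstacle here: the proposition essentially repackages Theorem \ref{T-2.1} and Lemma \ref{L-2.2} with standard Taylor estimates, the only points needing care being the affine reparametrizations ($A_s=(1-s)A+sB$, resp.\ $A+sX$) that let the derivative hypothesis of (ii) be invoked at every point of the relevant segment, and the use of convexity and closedness of $\bM_n^+$ to pass positivity through integrals and limits.
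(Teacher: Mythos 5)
Your proposal is correct and follows essentially the same route as the paper: (i)$\Leftrightarrow$(ii) via Theorem \ref{T-2.1}(a), Lemma \ref{L-2.2} and the mean-value property of divided differences applied along the segment from $A$ to $B$, and (ii)$\Leftrightarrow$(iii) via Taylor expansion of $t\mapsto f(A+tX)$. The only (harmless) variation is that you use the integral and Peano forms of Taylor's theorem directly on the matrix-valued curve, whereas the paper scalarizes with states $\omega$ and uses the Lagrange remainder together with the continuity of $D^kf$.
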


\begin{proof}
(i)\,$\Rightarrow$\,(ii).\enspace
Since $n\ge2$, assertion (i) implies the $C^k$ of $f$ due to Theorem \ref{T-2.1}\,(a),
and hence the implication was already proved in Lemma \ref{L-2.2}.

(ii)\,$\Rightarrow$\,(i).\enspace
Let $A,B\in\bM_n^{sa}(a,b)$. As in the proof of Proposition \ref{P-1.3}, choose a
$\delta>0$ and define $\phi_\omega$ for any state $\omega$ on $\bM_n$. For any
$0=t_0<t_1<\dots<t_k=1$, there exists a $\xi\in[0,1]$ such that
$$
\phi_\omega^{[k]}(t_0,t_1,\dots,t_k)={1\over k!}\,\phi_\omega^{(k)}(\xi)
={1\over k!}\,\omega\biggl({d^k\over dt^k}\,f(A+t(B-A))\Big|_{t=\xi}\biggr),
$$
which is non-negative due to assumption (ii). Hence by \eqref{eq-1.1},
$$
\omega(f^{[k]}(A,B;t_0,t_1,\dots,t_k))\ge0
$$
for all states $\omega$, so (i) follows.

(ii)\,$\Rightarrow$\,(iii).\enspace
Let $A,X$ be as stated in (iii); there is a $\delta>0$ such that $A+tX\in\bM_n^{sa}(a,b)$
for all $t\in(-\delta,1+\delta)$. For any state $\omega$ on $\bM_n$ define
$\phi_\omega(t):=\omega(f(A+tX))$ for $t\in(-\delta,1+\delta)$. By Taylor's theorem there
exists a $\theta\in(0,1)$ such that
\begin{equation}\label{eq-2.5}
\phi_\omega(1)=\sum_{l=0}^{k-1}{\phi_\omega^{(l)}(0)\over l!}
+{\phi_\omega^{(k)}(\theta)\over k!}.
\end{equation}
Notice that
$$
\phi_\omega^{(l)}(0)=\omega\biggl({d^l\over dt^l}\,f(A+tX)\Big|_{t=0}\biggr)
=\omega(D^lf(A)(X,\dots,X)),\qquad 0\le l\le k-1,
$$
and
$$
\phi_\omega^{(k)}(\theta)=\omega\biggl({d^k\over dt^k}\,f(A+tX)\Big|_{t=\theta}\biggr)
=\omega\biggl({d^k\over dt^k}\,f((A+\theta X)+tX)\Big|_{t=0}\biggr)\ge0
$$
due to (ii). Hence
$$
\omega\biggl(f(A+X)-\sum_{l=0}^{k-1}{1\over l!}\,d^lf(A)(X,\dots,X)\biggr)\ge0
$$
for all states $\omega$, so (iii) follows.

(iii)\,$\Rightarrow$\,(ii).\enspace
Let $A,X$ be as in (ii); we may assume that $A+X\in\bM_n^{sa}(a,b)$. For each $t\in(0,1)$,
as in \eqref{eq-2.5} there exists a $\theta_t\in(0,1)$ such that
$$
{1\over k!}\,\omega(D^kf(A+\theta_ttX)(X,\dots,X))
=\omega\Biggl(f(A+tX)-\sum_{l=0}^{k-1}{1\over l!}\,D^l(A)(tX,\dots,tX)\Biggr)\ge0.
$$
Since $D^kf(B)$ is continuous in $B\in\bM_n^{sa}(a,b)$, letting $t\searrow0$ we have
$\omega(D^kf(A)(X,\dots,X))\ge0$ for all states $\omega$. Hence
${d^k\over dt^k}\,f(A+tX)\big|_{t=0}=D^kf(A)(X,\dots,X)\ge0$.
\end{proof}

\section{Characterizations of operator $k$-tone functions}\label{sec-3}

The aim of this section is to present general characterizations of operator
$k$-tone functions on $(a,b)$. A well-known theorem of Bernstein is stated in
\cite[Chapter IV, Theorem 3a]{Wi} as follows: If $f$ is absolutely monotone (i.e., all
$f^{(k)}$, $k=0,1,2,\dots$, are non-negative) on $[a,b)$, then $f$ can be analytically
continued into $\{z\in\bC:|z-a|<b-a\}$. Its variation due to Valiron given in
\cite[pp.\ 160--161]{Co} says that if a function $f$ on $(a,b)$ has non-negative even
derivatives $f,f'',f^{(4)},\dots$, then it is analytic on $(a,b)$. We will need a little
bit more as given in the following:

\begin{lemma}\label{L-3.1} 
Let $f$ be operator $k$-tone on $(a,b)$, then $f$ is analytic on $(a,b)$ and
moreover the radius of convergence of the Taylor expansion of $f$ at $x\in (a,b)$ is 
$\delta_x:=\min\{x-a,b-x\}$.
\end{lemma}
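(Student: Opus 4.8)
The plan is to reduce the analyticity of an operator $k$-tone function to the classical results of Bernstein and Valiron via the differentiability and positivity information already available from Theorem \ref{T-2.1}. First I would fix $x\in(a,b)$ and pass from operator $k$-tonicity to matrix $k$-tonicity of \emph{every} order $n$ (Proposition \ref{P-1.6}). For each $n$, Theorem \ref{T-2.1}(d) tells us that the Hankel-type matrix $\bigl[f^{(i+j+k)}(x)/(i+j+k)!\bigr]_{i,j=0}^{n-1}$ is positive semidefinite for almost every $x$; but since $n$ is arbitrary and, by Theorem \ref{T-2.1}(a), $f$ is actually $C^\infty$ (being $C^{2n+k-4}$ for all $n$), this positivity propagates to \emph{all} $x\in(a,b)$ by continuity of the derivatives. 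In particular the diagonal entries give $f^{(m)}(x)\ge0$ for every even $m\ge k$ (indeed for every $m\ge k$ when $k$ is arbitrary, but the even ones already suffice), at every point of $(a,b)$.

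Next I would invoke Valiron's theorem as quoted in the excerpt: a function on an interval all of whose even-order derivatives are non-negative is analytic there. Applying this to $g:=f^{(k)}$ on $(a,b)$ — whose even derivatives $g,g'',g^{(4)},\dots$ are exactly $f^{(k)},f^{(k+2)},f^{(k+4)},\dots$, all non-negative by the previous paragraph — shows $f^{(k)}$ is analytic on $(a,b)$, hence $f$ itself is analytic on $(a,b)$. This settles the first assertion of the lemma.

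For the radius of convergence at a point $x$, the upper bound $\delta_x=\min\{x-a,b-x\}$ is automatic since the Taylor series of an analytic function cannot converge past the boundary of the domain of analyticity (and here $f$ need not extend beyond $(a,b)$). For the lower bound I would use the positivity of the even derivatives together with a Bernstein-type estimate: positivity of $f^{(m)}$ for even $m\ge k$ forces the relevant Taylor coefficients to be controlled by the values of $f$ near the far endpoint, giving convergence of the Taylor series on the full symmetric interval $(x-\delta_x,x+\delta_x)$. Concretely, I would split $f$ into a polynomial of degree $<k$ plus $(t-x)^{k-1}$ times the function $h(t):=f^{[k-1]}(t,x,\dots,x)$ appearing in \eqref{eq-2.1}; after checking $h$ is absolutely monotone near the relevant endpoint (its derivatives at $x$ are the non-negative numbers $\ell!\,f^{(\ell+k)}(x)/(\ell+k)!$ from the proof of Lemma \ref{L-2.3}(d)), Bernstein's theorem as quoted from \cite{Wi} gives analytic continuation of $h$ into a disc of radius equal to the distance from $x$ to that endpoint, and symmetrically for the other endpoint, yielding radius $\delta_x$ for $f$.

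The main obstacle I anticipate is the lower bound on the radius of convergence: getting exactly $\delta_x$ rather than some smaller explicit radius requires applying Bernstein's theorem at \emph{both} endpoints (using that the even derivatives are non-negative whether one expands "to the right" or, after the reflection $t\mapsto 2x-t$, "to the left"), and then noting that a power series converging on $(x-\delta_x,x)$ and on $(x,x+\delta_x)$ has radius at least $\delta_x$. Care is also needed because Bernstein's statement is for absolutely monotone functions on a half-open interval $[x,b)$, so one must verify all derivatives (not merely the even ones) of the auxiliary function $h$ are non-negative on a one-sided neighbourhood — which again follows from the positive semidefiniteness of the Hankel matrices, since a positive semidefinite Hankel matrix has non-negative entries along every diagonal, including the odd ones.
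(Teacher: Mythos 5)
Your first half is fine and close to the paper's route: from Proposition \ref{P-1.6} and Theorem \ref{T-2.1}\,(a),(d) you correctly extract that $f$ is $C^\infty$ and that $g:=f^{(k)}$ has all even-order derivatives $g,g'',g^{(4)},\dots$ non-negative everywhere, and Valiron's theorem then gives analyticity. (One parenthetical is wrong, though: the diagonal of the Hankel matrix only yields $f^{(k+2j)}\ge0$, \emph{not} $f^{(m)}\ge0$ for every $m\ge k$ --- already for $k=1$ and $f=\log$ one has $f''<0$. You flag this claim as unnecessary, so it does no harm to the analyticity argument.)

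The genuine gap is in the radius-of-convergence part, and it stems from the same confusion. Your plan rests on the auxiliary function $h(t)=f^{[k-1]}(t,x,\dots,x)$ being absolutely monotone near an endpoint, which you justify by asserting that a positive semidefinite Hankel matrix has non-negative entries along every diagonal. That is false: $\bigl(\begin{smallmatrix}1&-1\\-1&1\end{smallmatrix}\bigr)$ is a positive semidefinite Hankel matrix with a negative odd entry, and concretely for $f(x)=\log(x+2)$ on $(-1,1)$ (operator $1$-tone) the matrix $\bigl[f^{(i+j+1)}(0)/(i+j+1)!\bigr]_{i,j=0}^{1}$ is positive definite while $f''(0)<0$; here $h=f$ is not absolutely monotone on any one-sided neighbourhood. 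Positivity of the Hankel matrices controls only the \emph{even}-order derivatives of $g=f^{(k)}$; the odd-order Taylor coefficients need a separate argument, and this is precisely what your proposal is missing. The paper supplies it in two steps: (i) symmetrize, setting $2\tilde g(h):=g(x+h)+g(x-h)$, which \emph{is} absolutely monotone on $[0,\delta_x)$ (its derivatives are governed by the non-negative even derivatives of $g$), so Bernstein's theorem gives radius at least $\delta_x$ for the even part $\sum_n\bigl(g^{(2n)}(x)/(2n)!\bigr)h^{2n}$; and (ii) bound the odd coefficients by the even ones via the $2\times2$ principal minors of the matrices in Theorem \ref{T-2.1}\,(d), namely
\begin{equation*}
\biggl(\frac{g^{(2n+1)}(x)}{(2n+1+k)!}\biggg)^2\le\frac{g^{(2n)}(x)}{(2n+k)!}\cdot\frac{g^{(2n+2)}(x)}{(2n+2+k)!}.
\end{equation*}
Without step (ii), or some substitute for it, you cannot conclude that the full Taylor series of $g$ at $x$ converges on $(x-\delta_x,x+\delta_x)$. (A smaller point: your ``upper bound'' remark is also not right --- the Taylor series of a function analytic only on $(a,b)$ can perfectly well have radius exceeding the distance to the boundary --- but the substance of the lemma, and of the paper's proof, is the lower bound $\ge\delta_x$.)
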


\begin{proof}
By Propositions \ref{P-1.6}, \ref{P-2.6} and Theorem \ref{T-2.1}, we know that
$g:=f^{(k)}$ is $C^\infty$ on $(a,b)$ and moreover $g$, $g''$, $g^{(4)}$, $\dots$ are
non-negative. Take $x\in (a,b)$ and for $|h|<\delta_x$ let
$2\tilde g(h):=g(x+h)+g(x-h)$. Clearly $\tilde g$ is absolutely monotone on 
$(-\delta_x,\delta_x)$. By Bernstein's theorem mentioned above, $\tilde g$ is analytic
and its  Taylor expansion at $x$, which is
$\sum_{n=0}^\infty\bigl(g^{(2n)}(x)/2n!\bigr)h^{2n}$, has radius of convergence at least
$\delta_x$. On the other hand, by Theorem \ref{T-2.1}\,(d), for every $n\ge 0$, 
$$
\biggl(\frac{g^{(2n+1)}(x)}{(2n+1+k)!}\biggr)^2\leq
\frac{g^{(2n)}(x)}{(2n+k)!}\cdot \frac{g^{(2n+2)}(x)}{(2n+2+k)!},
$$
which shows that $\sum_{n=0}^\infty\bigl(g^{(n)}(x)/n!\bigr) h^{n}$ also has
radius of convergence at least $\delta_x$. From these estimates one can easily see,
using any Taylor formula, that
$g(x+h)=\sum_{n=0}^\infty\bigl(g^{(n)}(x)/n!\bigr) h^{n}$ for
$h\in (\delta_x/2,\delta_x/2)$. Thus $g$ is analytic. But it coincides
on $(x-\delta_x/2,x+\delta_x/2)$ with its Taylor expansion at $x$ which is known to have
radius of convergence at least $\delta_x$. Thus by the analytic continuation principle,
they have to agree on $(x-\delta_x,x+\delta_x)$. The conclusion on $f$ now follows.
\end{proof}

\begin{lemma}\label{L-3.2}
Let $g$ be an operator monotone function on $(a,b)$ and $\alpha\in (a,b)$, then 
$(x-\alpha)^{k-1}g$ is operator $(k+2l)$-tone for any $k\in\bN$ and any $l\in\bN\cup\{0\}$.
\end{lemma}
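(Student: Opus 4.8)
The plan is to reduce the claim about $(x-\alpha)^{k-1}g$ to the integral representation of operator monotone functions, which makes the $k$-tonicity transparent after differentiating under the integral sign. First I would reduce to the case of a bounded interval containing $\alpha$: since operator $k$-tonicity is a local-looking property built from $f^{[k]}(A,B;t_0,\dots,t_k)$ with $A,B\in B(\cH)^{sa}(a,b)$, and since an operator monotone function on $(a,b)$ restricts to one on any subinterval, it suffices to prove the statement when $(a,b)$ is bounded; then an affine change of variables lets me assume $(a,b)=(-1,1)$ (and operator $k$-tonicity is visibly invariant under $x\mapsto cx+d$, which I would note briefly). On $(-1,1)$, Theorem~\ref{T-1.8} gives a finite positive measure $\mu$ on $[-1,1]$ with
$$
g(x)=g(\alpha)+\int_{[-1,1]}{x-\alpha\over(1-\lambda x)(1-\lambda\alpha)}\,d\mu(\lambda),
\qquad x\in(-1,1).
$$

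Next I would fix $m:=k+2l$ and compute $\bigl((x-\alpha)^{k-1}g\bigr)^{[m]}(x_1,\dots,x_{m+1})$ directly. Write $(x-\alpha)^{k-1}g(x)=g(\alpha)(x-\alpha)^{k-1}+\int_{[-1,1]}(x-\alpha)^{k-1}\cdot\dfrac{x-\alpha}{(1-\lambda x)(1-\lambda\alpha)}\,d\mu(\lambda)$. The polynomial part $g(\alpha)(x-\alpha)^{k-1}$ has degree $k-1<m$, so its $m$th divided difference vanishes (Example~\ref{E-1.2}). For the integral part, Corollary~\ref{C-1.9} is precisely tailored to this: since $m\ge k$, it gives
$$
\bigl((x-\alpha)^{k-1}g\bigr)^{[m]}(x_1,\dots,x_{m+1})
=\int_{[-1,1]}{\lambda^{m-k}(1-\lambda\alpha)^{k-1}\over(1-\lambda x_1)\cdots(1-\lambda x_{m+1})}\,d\mu(\lambda).
$$
Here $m-k=2l$ is even, so $\lambda^{m-k}=\lambda^{2l}\ge0$ on $[-1,1]$, and $(1-\lambda\alpha)^{k-1}>0$ for $\lambda\in[-1,1]$ and $\alpha\in(-1,1)$.

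Then I would transfer this to the operator-valued divided difference. For $A,B\in B(\cH)^{sa}(-1,1)$ with $A\le B$ and $0=t_0<t_1<\dots<t_m=1$, set $X_j:=(1-t_j)A+t_jB$; by Lemma~\ref{L-1.1} (expansion in terms of $f(X_j)$) the operator $\bigl((x-\alpha)^{k-1}g\bigr)^{[m]}(A,B;t_0,\dots,t_m)$ is obtained from the scalar formula above by replacing $1-\lambda x_j$ with $I-\lambda X_j$. Since $\|A\|,\|B\|<1$, we have $\|\lambda X_j\|<1$ for $\lambda\in[-1,1]$, so $I-\lambda X_j$ is positive and invertible, the product $\prod_j (I-\lambda X_j)^{-1}$ in the (commuting-in-$j$? no) — here I must be slightly careful: the $X_j$ need not commute, so the ``product'' $\dfrac{1}{(1-\lambda x_1)\cdots(1-\lambda x_{m+1})}$ should be read as the operator obtained from the Lemma~\ref{L-1.1} expansion, which is manifestly self-adjoint. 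The cleanest route is: apply Corollary~\ref{C-1.9} with $x_1,\dots,x_{m+1}$ replaced by the (real) points $(1-t_j)\mu+t_j\nu$ only in the scalar case, and instead argue as in Proposition~\ref{P-1.6}/Theorem~\ref{T-1.8}: test against an arbitrary state $\omega$, use $\omega\bigl(((x-\alpha)^{k-1}g)^{[m]}(A,B;t_0,\dots,t_m)\bigr)=\phi_\omega^{[m]}(t_0,\dots,t_m)$ where $\phi_\omega(t):=\omega\bigl(((1-t)A+tB-\alpha)^{k-1}g((1-t)A+tB)\bigr)$, and combine with the integral formula to see nonnegativity of each $\omega$-value; alternatively, compress and take strong limits from the matrix case.

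The main obstacle, and the one place needing genuine care, is precisely this noncommutativity: Corollary~\ref{C-1.9} is stated for scalar divided differences, and pushing the identity
$$
\Bigl({(x-\alpha)^k\over(1-\lambda x)(1-\lambda\alpha)^k}\Bigr)^{[m]}(x_1,\dots,x_{m+1})
={\lambda^{m-k}\over(1-\lambda x_1)\cdots(1-\lambda x_{m+1})}
$$
to operator arguments requires knowing the algebraic identities in its proof (the telescoping of $(1-\lambda x_i)^{l-1}-(1-\lambda x_j)^{l-1}$ and the key fact $\bigl((1-\lambda x)^{l-1}\bigr)^{[m]}=0$ for $l-1<m$) hold at the operator level — they do, since they only use that divided differences of a polynomial of degree $<m$ vanish (Example~\ref{E-1.2}) and linearity, both valid for the operator-valued divided difference. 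So I would either (i) redo the short computation of Corollary~\ref{C-1.9} with $A,B;t_0,\dots,t_m$ in place of $x_1,\dots,x_{m+1}$, landing on the operator identity $\bigl((x-\alpha)^{k-1}g\bigr)^{[m]}(A,B;t_0,\dots,t_m)=\int_{[-1,1]}\lambda^{2l}(1-\lambda\alpha)^{k-1}\,\bigl({1\over 1-\lambda x}\bigr)^{[m]}(A,B;t_0,\dots,t_m)\,d\mu(\lambda)$, and then observe $\bigl({1\over 1-\lambda x}\bigr)^{[m]}(A,B;t_0,\dots,t_m)$ equals (by Lemma~\ref{L-1.1}, resolvent expansion) a positive operator of the form $C_\lambda^* C_\lambda$ for a suitable $C_\lambda$, or more simply is positive because ${1\over 1-\lambda x}$ is operator monotone hence operator $m$-tone on $(-1,1)$ for $|\lambda|\le 1$ (it is $\lambda^{-1}(1-\lambda x)^{-1}$ up to constants, an operator monotone function, so by the $k=1$ case it is operator $1$-tone, and by the easy monotonicity ``$k$-tone $\Rightarrow$ $(k+2l)$-tone'' for this specific function via repeated integration — but that is what we are proving, so instead) — cleanest: $\bigl({1\over 1-\lambda x}\bigr)^{[m]}(A,B;t_0,\dots,t_m)\ge 0$ follows from Corollary~\ref{C-1.9} applied with $k=1$, $f(x)={1\over 1-\lambda x}$-type; or (ii) simply invoke the state-wise scalar reduction described above, which sidesteps noncommutativity entirely and is the shortest complete argument. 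I expect option (ii) to be the one written out: fix a state $\omega$, reduce $\omega$-values of operator divided differences to scalar divided differences of $t\mapsto \omega(((1-t)A+tB-\alpha)^{k-1}g((1-t)A+tB))$, and conclude nonnegativity from the scalar Corollary~\ref{C-1.9} together with $\lambda^{2l}(1-\lambda\alpha)^{k-1}\ge 0$ and positivity of $\mu$.
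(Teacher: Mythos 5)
You assemble the right ingredients (reduction to $(-1,1)$, the representation of Theorem \ref{T-1.8}, Corollary \ref{C-1.9}) and you correctly flag the noncommutativity obstruction, but neither of your two proposed ways around it closes the gap, and the one you say you would actually write out, option (ii), rests on a misapplication. The state-wise reduction gives $\omega\bigl(f^{[m]}(A,B;t_0,\dots,t_m)\bigr)=\phi_\omega^{[m]}(t_0,\dots,t_m)$ where $\phi_\omega(t)=\omega(f((1-t)A+tB))$; this is the $m$th divided difference of the scalar function $\phi_\omega$ \emph{of the parameter $t$}, not a divided difference of $f$ at real points of $(-1,1)$, so the scalar Corollary \ref{C-1.9} says nothing about it. Pushing the integral representation inside $\omega$ only yields $\phi_\omega^{[m]}=\int\omega\bigl(h_\lambda^{[m]}(A,B;t_0,\dots,t_m)\bigr)\lambda^{2l}(1-\lambda\alpha)^{k-1}\,d\mu(\lambda)$ with $h_\lambda$ essentially the resolvent, i.e.\ you are back to needing positivity of the operator-valued divided difference of $(1-\lambda x)^{-1}$ at distinct nodes --- exactly the noncommutative statement you were trying to avoid, so the reduction does not ``sidestep noncommutativity entirely.'' Option (i) hits the same wall: the telescoping gives $\bigl((1-\lambda x)^{-1}\bigr)^{[m]}(A,B;t_0,\dots,t_m)=\lambda^m R_{i_0}(B-A)R_{i_1}(B-A)\cdots(B-A)R_{i_m}$, an interlaced product of the \emph{distinct} resolvents $R_j=(I-\lambda X_j)^{-1}$, which is not of the form $C_\lambda^*C_\lambda$ in any evident way; its positivity is precisely the operator $m$-tonicity of the resolvent, a special case ($k=1$) of the lemma being proved. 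You notice this circularity yourself but do not resolve it.

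The paper's resolution, which you never invoke, is to route the argument through the $m$th \emph{derivative} rather than through divided differences at distinct nodes: by Propositions \ref{P-1.6} and \ref{P-2.6} it suffices to show ${d^m\over dt^m}\,f(A+tX)\big|_{t=0}\ge0$ for matrices $A\in\bM_n^{sa}(a,b)$ and $X\in\bM_n^+$ (the smoothness needed there comes from Lemma \ref{L-3.1} applied to $g$). Diagonalizing $A=\diag(a_1,\dots,a_n)$, Daleckii--Krein's formula writes the $(i,j)$ entry of this derivative as $\sum m!\,f^{[m]}(a_i,a_{r_1},\dots,a_{r_{m-1}},a_j)X_{ir_1}\cdots X_{r_{m-1}j}$, involving only \emph{scalar} divided differences at eigenvalues, to which Corollary \ref{C-1.9} applies verbatim; the whole expression then collapses to $m!\int_{[-1,1]}D(\lambda)^{1/2}\bigl(D(\lambda)^{1/2}XD(\lambda)^{1/2}\bigr)^mD(\lambda)^{1/2}\lambda^{2l}(1-\lambda\alpha)^{k-1}\,d\mu(\lambda)\ge0$. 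The point is that at $t=0$ all the resolvents coincide with the single matrix $D(\lambda)$, making the product manifestly positive --- a simplification unavailable for distinct $t_0,\dots,t_m$. Without this step (or an independent proof of positivity of the interlaced resolvent product) your argument does not go through.
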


\begin{proof}
Set $m:=k+2l$. By Lemma \ref{L-3.1}, $f:=(x-\alpha)^{k-1}g$ is analytic.
By Propositions \ref{P-1.6} and \ref{P-2.6} we need to check that
${d^{m}\over dt^{m}}\,f(A+tX)\big|_{t=0}\geq0$ for all
$A\in\bM_n^{sa}(a,b)$ and $X\in\bM_n^+$ for any $n\in\bN$. Since this is a local estimate,
using a restriction, a translation and a dilation, we can always
assume that actually $f$ is operator monotone on $(-1,1)$, $\alpha\in(-1,1)$, and
the spectrum of $A$ sits in $(-1,1)$.

It suffices to assume that $A$ is diagonal so that $A=\diag(a_1,\dots,a_n)$. Using
Daleckii and Krein's derivative formula in \eqref{eq-2.a} and the formula for
divided differences given in Corollary \ref{C-1.9}, we have
\begin{align}
&{d^{m}\over dt^{m}}\,f(A+tX)\bigg|_{t=0} \nonumber\\
&\quad=m!\int_{[-1,1]}
D(\lambda)^{1/2}(D(\lambda)^{1/2}XD(\lambda)^{1/2})^mD(\lambda)^{1/2}
\lambda^{2l}(1-\lambda\alpha)^{k-1}\,d\mu(\lambda)\ge0, \label{eq-3.1}
\end{align}
where
$$
D(\lambda):=\diag\biggl({1\over1-\lambda a_1},\dots,{1\over1-\lambda a_n}\biggr),
\qquad\lambda\in[-1,1].
$$
\end{proof}

\begin{thm}\label{T-3.3}
Let $f$ be a real function on $(a,b)$, where $-\infty\le a<b\le\infty$. Let
$k\in\bN$. Then the following conditions {\rm(i)}--{\rm(vi)} are equivalent:
\begin{itemize}
\item[\rm(i)] $f$ is operator $k$-tone on $(a,b)$;
\item[\rm(ii)] $f$ is matrix $k$-tone of order $n$ on $(a,b)$ for every $n\in\bN$;
\item[\rm(iii)] $f$ is $C^k$ on $(a,b)$ and
$$
{d^k\over dt^k}\,f(A+tX)\Big|_{t=0}\ge0
$$
for every $A\in\bM_n^{sa}(a,b)$ and every $X\in\bM_n^+$ for any $n\in\bN$;
\item[\rm(iv)] $f$ is analytic on $(a,b)$ and
$$
{d^k\over dt^k}\,f(A+tX)\Big|_{t=0}\ge0
$$
for every $A\in B(\cH)^{sa}(a,b)$ and every $X\in B(\cH)^+$, where the
above derivative of order $k$ is well defined in the operator norm;
\item[\rm(v)] $f$ is $C^{k-2}$ on $(a,b)$ {\rm(}this is void for $k=1${\rm)} and
$f^{[k-1]}(x,\underbrace{\alpha,\cdots,\alpha}_{k-1})$ is operator monotone on $(a,b)$
for some {\rm(}equivalently, any{\rm)} $\alpha\in(a,b)$
{\rm(}with continuation of value at $x=\alpha${\rm)};
\item[\rm(vi)] $f$ is $C^{k-1}$ on $(a,b)$ and
$f^{[k-1]}(x,\alpha_1,\cdots,\alpha_{k-1})$ {\rm(}this is $f(x)$ for $k=1${\rm)} is
operator monotone on $(a,b)$ for some {\rm(}equivalently, any{\rm)} choice of
$\alpha_1,\dots,\alpha_n$ from $(a,b)$;
\item[\rm(vii)] $f$ is $C^\infty$ on $(a,b)$ and
$$
\Biggl[{f^{(i+j+k)}(x)\over(i+j+k)!}\Biggr]_{i,j=0}^{n-1}
$$
is positive semidefinite for every $x\in(a,b)$ and every $n\in\bN$.
\end{itemize}
\end{thm}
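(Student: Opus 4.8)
The plan is to establish the chain of equivalences by exploiting the earlier accumulated machinery rather than by proving a single big loop. First I would dispatch the cheap implications. The equivalence (i)$\Leftrightarrow$(ii) is immediate from Proposition \ref{P-1.6}. For (ii)$\Leftrightarrow$(iii), note that once $n\ge2$, matrix $k$-tonicity of order $n$ already forces the $C^k$ regularity by Theorem \ref{T-2.1}(a) (here $k\le 2n+k-4$ whenever $n\ge2$ and $k\ge2$; the $k=1$ case is L\"owner's), so Proposition \ref{P-2.6}, which gives (i)$\Leftrightarrow$(ii) of that proposition quantified over all $n\ge2$, yields (ii)$\Leftrightarrow$(iii). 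The implication (vii)$\Rightarrow$(iii) follows by reversing the Daleckii--Krein computation: if $A=\diag(x_1,\dots,x_n)$ then $\frac{d^k}{dt^k}f(A+tX)|_{t=0}$ is a sum of divided-difference blocks, and evaluating at coincident nodes $x_1=\dots=x_n=x$ turns the positivity of the Hankel-type matrix $[f^{(i+j+k)}(x)/(i+j+k)!]$ into the positivity required; for general diagonal $A$ one approximates or uses the full statement of (c) in Theorem \ref{T-2.1} together with a Hermite-interpolation identity. Conversely (iii)$\Rightarrow$(vii) is essentially Theorem \ref{T-2.1}(d) upgraded from ``almost every $x$'' to ``every $x$'': by Lemma \ref{L-3.1} operator $k$-tonicity makes $f$ analytic, so $f^{(k)}$, $f^{(k+2)}$, $\dots$ are continuous and the a.e.\ positive-semidefiniteness of the Hankel matrix extends to all $x$ by continuity.

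Next I would handle the analyticity upgrade and the operator-norm derivative in (iv). Lemma \ref{L-3.1} already shows (i)$\Rightarrow f$ analytic with radius of convergence $\delta_x=\min\{x-a,b-x\}$ at each point; this is exactly what is needed to make sense of $\frac{d^k}{dt^k}f(A+tX)|_{t=0}$ in operator norm for $A\in B(\cH)^{sa}(a,b)$ and $X\in B(\cH)^+$, since analyticity of $f$ guarantees the norm-convergent power-series expansion of $t\mapsto f(A+tX)$ on a neighbourhood of $0$. Thus (i)$\Rightarrow$(iv). For (iv)$\Rightarrow$(iii) one simply restricts to finite-rank $A,X$, i.e.\ to $\bM_n$ compressed into $B(\cH)$; the derivative formula is then the matrix one. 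So (iii) and (iv) sit in the equivalence class.

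The most substantive part is the equivalence with the divided-difference conditions (v) and (vi) and closing everything to (i). For (i)$\Rightarrow$(v): operator $k$-tonicity gives $C^{k-2}$ (indeed analytic) by Theorem \ref{T-2.1}, so $g(x):=f^{[k-1]}(x,\alpha,\dots,\alpha)$ is well defined with value $f^{(k-1)}(\alpha)/(k-1)!$ at $x=\alpha$; one must show $g$ is operator monotone. The clean way is to invoke the characterization of operator monotonicity by positivity of the first derivative (the $k=1$ case of (iii)), compute $\frac{d}{dt}g(A+tX)|_{t=0}$ via Daleckii--Krein, and recognize the resulting matrix, through the identity $g^{[1]}(x_i,x_j)=f^{[k]}(x_i,x_j,\alpha,\dots,\alpha)$, as precisely the positive matrix furnished by Theorem \ref{T-2.1}(c) (after passing $\alpha\to x_1$ or by the full $k$-node version). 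For the converse (v)$\Rightarrow$(i), given $g:=f^{[k-1]}(\cdot,\alpha,\dots,\alpha)$ operator monotone, we have $f(x)=\sum_{l=0}^{k-2}\frac{f^{(l)}(\alpha)}{l!}(x-\alpha)^l+(x-\alpha)^{k-1}g(x)$, and Lemma \ref{L-3.2} says $(x-\alpha)^{k-1}g$ is operator $k$-tone while polynomials of degree $\le k-1$ are trivially operator $k$-tone (Example \ref{E-1.2}), so $f$ is operator $k$-tone; the ``some $\alpha$ $\Leftrightarrow$ any $\alpha$'' clause then follows a posteriori since (i) is $\alpha$-free. Condition (vi) is treated the same way, with the multi-node divided difference $f^{[k-1]}(x,\alpha_1,\dots,\alpha_{k-1})$ in place of the confluent one; here one uses Lemma \ref{L-3.2} in the form that $\prod_{i}(x-\alpha_i)\cdot(\text{operator monotone})$-type combinations are operator $k$-tone, or more directly reduces to (v) by Hermite-interpolation / continuity in the nodes. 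The main obstacle I anticipate is the bookkeeping in (i)$\Rightarrow$(v),(vi): correctly matching the Daleckii--Krein block matrix for the derivative of a divided-difference function against the exact positive-semidefinite matrix produced in Theorem \ref{T-2.1}(c), and justifying the passage to confluent nodes (continuity of higher divided differences, which needs the $C^k$ regularity already in hand). Everything else is assembling implications we have already proved.
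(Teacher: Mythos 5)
Most of your plan coincides with the paper's actual proof: (i)$\Leftrightarrow$(ii)$\Leftrightarrow$(iii) via Propositions \ref{P-1.6} and \ref{P-2.6}, (i)$\Rightarrow$(iv) via the analyticity of Lemma \ref{L-3.1} and power series, (i)$\Rightarrow$(v) by matching the L\"owner matrix of $g=f^{[k-1]}(\cdot,\alpha,\dots,\alpha)$ with the positive matrix $[f^{[k]}(x_i,\alpha,\dots,\alpha,x_j)]$ from Lemma \ref{L-2.3}(c), and (v)$\Rightarrow$(i) via the Taylor identity \eqref{eq-3.3} together with Lemma \ref{L-3.2}. But your treatment of (vii)$\Rightarrow$(iii) has a genuine gap. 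Condition (vii) only gives positivity of the Hankel matrices $[f^{(i+j+k)}(x)/(i+j+k)!]$ at a \emph{single} point, i.e.\ information about divided differences with all nodes coincident, whereas the Daleckii--Krein expression for ${d^k\over dt^k}f(A+tX)\big|_{t=0}$ at a diagonal $A$ with \emph{distinct} eigenvalues involves $f^{[k]}$ at distinct nodes. There is no limiting or ``approximation'' procedure going from coincident to distinct nodes (the easy limit goes the other way), and you cannot invoke Theorem \ref{T-2.1}(c), since that is a \emph{consequence} of matrix $k$-tonicity, which is what you are trying to establish. The paper closes this direction by a fundamentally different mechanism: it reruns the argument of Lemma \ref{L-3.1} under hypothesis (vii) to get a convergent Taylor expansion, writes $f(x)=\sum_{l<k}f^{(l)}(0)x^l/l!+x^{k-1}g(x)$ with $g$ the shifted Taylor tail, and then uses the Hamburger moment problem argument of Bendat--Sherman to show $g$ is operator monotone, after which Lemma \ref{L-3.2} gives (i). That moment-problem step is the essential content of (vii)$\Rightarrow$(i) and is absent from your sketch.

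Two smaller points. In (i)$\Rightarrow$(iv) you establish that the operator-norm derivative exists but never argue its \emph{positivity} for infinite-dimensional $A,X$; the paper does this by expanding $f^{[k]}(A,A+X;t_0,\dots,t_k)\ge0$ term by term via Example \ref{E-1.2} and letting $t_i\searrow0$ (a compression/strong-limit argument would also work, but something must be said). For (vi) with distinct nodes $\alpha_1,\dots,\alpha_{k-1}$, Lemma \ref{L-3.2} as stated only covers the confluent product $(x-\alpha)^{k-1}g$; the paper needs and proves the intermediate fact that (i) implies $(x-\alpha)f$ is operator $(k+1)$-tone, applied $k-1$ times, to handle $\bigl\{\prod_l(x-\alpha_l)\bigr\}g$, and it proves (i)$\Rightarrow$(vi) by an iteration $g_1=f^{[1]}(x,\alpha_1)$, $g_2=g_1^{[1]}(x,\alpha_2),\dots$ rather than by continuity in the nodes; your appeal to ``Hermite interpolation / continuity'' again only moves from distinct to coincident nodes, not the direction you need.
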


\begin{proof}
The equivalence of (i), (ii) and (iii) is included in Propositions \ref{P-1.6}
and \ref{P-2.6}.

(i)\,$\Rightarrow$\,(iv).\enspace By Lemma \ref{L-3.1}, $f$ is analytic. It only remains
to justify that $A\mapsto f(A)$ is $C^k$ on $B(\cH)^{sa}(a,b)$ and the inequality in (iv).
Let $A\in B(\cH)^{sa}(a,b)$ with spectrum included in $[x_0-h,x_0+h]\subset(a,b)$, $h>0$.
By Lemma \ref{L-3.1}, $f$ is equal to its Taylor expansion at $x_0$ on
$(x_0-\delta_{x_0},x_0+\delta_{x_0})$. Thus for any $B$ with spectrum in
$(x_0-\delta_{x_0},x_0+\delta_{x_0})$ we have
$f(B)=\sum_{m=0}^\infty c_m (B-x_0I)^m$ with $c_m:= f^{(m)}(x_0)/m!$, and so
$B\mapsto f(B)$ is $C^\infty$ on $B(\cH)^{sa}(x_0-\delta_{x_0},x_0+\delta_{x_0})$, a
neighborhood of $A$ as $h<\delta_{x_0}$. For every $X\in B(\cH)^+$ we have
\begin{equation}\label{eq-3.2}
{d^k\over dt^k}\,f(A+tX)\Big|_{t=0}
=\sum_{m=0}^\infty c_m{d^k\over dt^k}\,(A-x_0I+tX)^m\Big|_{t=0}
=\sum_{m=k}^\infty c_mF_{k,m-k}(X,A-x_0I),
\end{equation}
where $F_{k,m-k}(X,Y)$ was introduced in Example \ref{E-1.2}. On the other hand, for
$0=t_0<t_1<\dots<t_k<1$, by assumption (i) we have
$$
0\le f^{[k]}(A,A+X;t_0,t_1,\dots,t_k)
=\sum_{m=0}^\infty c_m\bigl(x^m\bigr)^{[k]}(A-x_0I,A-x_0I+X;t_0,t_1,\dots,t_k).
$$
Now apply the formula of Example \ref{E-1.2} to each term of the above expansion and then
let $t_i\searrow0$ for $1\le i\le k$ to obtain
$\sum_{m=k}^\infty c_mF_{k,m-k}(X,A-x_0I)\ge0$.

(iv)\,$\Rightarrow$\,(iii) is obvious, and so (i)--(iv) are equivalent.

Next, we prove that (i)\,$\Rightarrow$\,(v) for any $\alpha\in(a,b)$. Assume
(i); then $f$ is analytic. For every $n\in\bN$ and for any choice of
$\alpha,x_1,\dots,x_n$ from $(-1,1)$, apply (c) of Lemma \ref{L-2.3} to $n+1$ points
$\alpha,x_1,\dots,x_n$ and take the $n\times n$ submatrix deleting the first row and the
first column to obtain
$$
\biggl[{f^{[k-1]}(x_i,\alpha,\dots,\alpha)-f^{[k-1]}(x_j,\alpha,\dots,\alpha)
\over x_i-x_j}\biggr]_{i,j=1}^n
=\bigl[f^{[k]}(x_i,\alpha,\dots,\alpha,x_j)\bigr]_{i,j=1}^n\ge0,
$$
which gives (vi) due to L\"owner's theorem \cite{Lo} (or \cite[V.3.4]{Bh}).
(Note also that L\"owner's theorem easily follows from Proposition \ref{P-2.6}.)

Conversely, assume (v) for some $\alpha\in(a,b)$; so $f$ is $C^{k-2}$ and
$g(x):=f^{[k-1]}(x,\alpha,\dots,\alpha)$ is operator monotone on
$(a,b)$ (with continuation $g(\alpha)=\beta$ at $x=\alpha$). As in \eqref{eq-2.1} we have
\begin{equation}\label{eq-3.3}
f(x)=\sum_{l=0}^{k-2}{f^{(l)}(\alpha)\over l!}(x-\alpha)^l+(x-\alpha)^{k-1}g(x),
\qquad x\in(a,b),
\end{equation}
where $f(x)=g(x)$ for $k=1$. Hence Lemma \ref{L-3.2} yields (i).

Before going to (vi), let us prove that (i) implies that $(x-\alpha)f$ is
operator $(k+1)$-tone for any $\alpha\in(a,b)$. By (i)\,$\Rightarrow$\,(v) and
\eqref{eq-3.3} we have a polynomial $P$ of degree at most $k-2$ and an operator monotone
function $g$ so that $f=P+(x-\alpha)^{k-1}g$. Hence
$(x-\alpha)f=(x-\alpha)P+(x-\alpha)^kg$. By Lemma \ref{L-3.2} this yields the operator
$(k+1)$-tonicity of $(x-\alpha)f$.

Now, assume that (vi) holds for some choice of $\alpha_1,\dots,\alpha_n$ from $(a,b)$. 
As easily verified, $g(x)=f^{[k-1]}(x,\alpha_1,\dots,\alpha_{k-1})$ is of the form
$$
{f(x)-(\mbox{a polynomial of at most degree $k-2$})\over
(x-\alpha_1)(x-\alpha_2)\cdots(x-\alpha_{k-1})}
$$
so that 
\begin{equation}\label{eq-3.4}
f(x)=P(x)+\Biggl\{\prod_{l=1}^{k-1}(x-\alpha_l)\Biggr\}g(x),
\end{equation} 
where $P$ is a polynomial of degree at most $k-2$ and $g$ is operator monotone
on $(a,b)$. Hence (i) follows by applying, $k-1$ times, the result we have proved above.

Conversely, let us prove that (i)\,$\Rightarrow$\,(vi) for all possible
choices of $\alpha_1,\dots\alpha_{k-1}$. Assume (i), so $f$ is analytic on $(a,b)$. Let
$\alpha_1,\dots,\alpha_{k-1}$ be arbitrary in $(a,b)$. By (i)\,$\Rightarrow$\,(v),
$f^{[k-1]}(x,\alpha_1,\dots,\alpha_1)$ is operator monotone on $(a,b)$.
Let $g_1(x):=f^{[1]}(x,\alpha_1)$; then we have 
$g_1^{[k-2]}(x,\alpha_1,\dots,\alpha_1)=f^{[k-1]}(x,\alpha_1,\dots,\alpha_1)$.
Hence by (v)\,$\Rightarrow$\,(i) with some $\alpha$ and $k-1$ in place of $k$, $g_1$ is
operator $(k-1)$-tone on $(a,b)$ and hence $g_1^{[k-2]}(x,\alpha_2,\dots,\alpha_2)$ is
operator monotone on $(a,b)$. Repeat this argument to
$g_2(x):=g_1^{[1]}(x,\alpha_2)$, $\dots$, $g_{k-1}:=g_{k-2}^{[1]}(x,\alpha_{k-1})$ and
notice that $g_{k-1}(x)=f^{[k-1]}(x,\alpha_1,\dots,\alpha_{k-1})$. Hence (v) follows.

Theorem \ref{T-2.1} shows that (i)\,$\Rightarrow$\,(vii). Conversely, assume
(vii). By the same argument as in the proof of Lemma \ref{L-3.2} we can assume that
$(a,b)=(-1,1)$. The proof of Lemma 3.1 can be performed under assumption (vii) so that
the Taylor expansion $\sum_{n=0}^\infty\bigl(f^{(n)}(0)/n!\bigr)x^n$ is convergent on
$(-1,1)$. Thus we can write
$$
f(x)=\sum_{l=0}^{k-1}{f^{(l)}(0)\over l!}\,x^l+x^{k-1}g(x),\qquad
x\in(-1,1),
$$
where $g(x):=\sum_{n=0}^\infty\bigl(f^{(n+k)}(0)/(n+k)!\bigr)x^{n+1}$. Now, the same proof
as that of \cite[Theorem 2.8]{BS} appealing to the Hamburger moment problem can be done
due to (vii) to see that $g$ is operator monotone on $(-1,1)$. Hence Lemma \ref{L-3.2}
yields (i).
\end{proof}

In the rest of the section we will point out some interesting consequences of the above
theorem and its proof, which have mostly appeared from the connections between $f$ and
its divided differences in \eqref{eq-3.3} and \eqref{eq-3.4}.

\begin{cor}\label{C-3.4}
Let $f$ be a real function on $(a,b)$, where $-\infty\le a<b\le\infty$. Let $k\in\bN$.
Then the following conditions are equivalent:
\begin{itemize}
\item[\rm(i)] $f$ is operator $k$-tone on $(a,b)$;
\item[\rm(ii)] $f$ is $C^{k-2}$ on $(a,b)$ {\rm(}this is void for $k=1${\rm)} and
for some {\rm(}equivalently, any{\rm)} $\alpha\in(a,b)$ there exists an operator monotone
function $g$ on $(a,b)$ such that
$$
f(x)=\sum_{l=0}^{k-2}{f^{(l)}(\alpha)\over l!}\,(x-\alpha)^l+(x-\alpha)^{k-1}g(x);
$$
\item[\rm(iii)] for some {\rm(}equivalently, any{\rm)} $\alpha_1,\dots,\alpha_{k-1}$ in
$(a,b)$ there exist a polynomial $P(x)$ of degree less than or equal to $k-2$ and an
operator monotone function $g$ on $(a,b)$ such that
$$
f(x)=P(x)+\Biggl\{\prod_{l=1}^{k-1}(x-\alpha_l)\Biggr\}g(x).
$$
\end{itemize}
\end{cor}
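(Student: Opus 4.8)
The plan is to deduce Corollary \ref{C-3.4} directly from Theorem \ref{T-3.3}, since the two numbered conditions (ii) and (iii) of the corollary are almost verbatim the implications packaged inside conditions (v) and (vi) of the theorem and the surrounding proof. First I would observe that condition (ii) of the corollary is exactly the content of the equation \eqref{eq-3.3} appearing in the proof of Theorem \ref{T-3.3}: there it was shown that (i)\,$\Rightarrow$\,(v), and from (v) one writes $f(x)=\sum_{l=0}^{k-2}\frac{f^{(l)}(\alpha)}{l!}(x-\alpha)^l+(x-\alpha)^{k-1}g(x)$ with $g(x)=f^{[k-1]}(x,\alpha,\dots,\alpha)$ operator monotone; conversely, that same equation together with Lemma \ref{L-3.2} gave (v)\,$\Rightarrow$\,(i). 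So the equivalence (i)\,$\Leftrightarrow$\,(ii) of the corollary is literally the equivalence (i)\,$\Leftrightarrow$\,(v) of the theorem, rephrased to suppress the divided-difference notation. The only tiny point to check is that the $g$ in (ii) may be taken the same for the ``for some'' and ``for any'' versions of $\alpha$, but this is immediate from the ``for some (equivalently, any) $\alpha$'' clause already in (v).

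Next I would handle (i)\,$\Leftrightarrow$\,(iii). The forward direction is established in the proof of Theorem \ref{T-3.3}: combining (i)\,$\Rightarrow$\,(v) with the relation \eqref{eq-3.4}, one gets $f(x)=P(x)+\bigl\{\prod_{l=1}^{k-1}(x-\alpha_l)\bigr\}g(x)$ with $\deg P\le k-2$ and $g$ operator monotone. For the reverse direction, I would invoke the argument used there for (vi)\,$\Rightarrow$\,(i): given such a decomposition, one applies Lemma \ref{L-3.2} $k-1$ times, peeling off the linear factors $(x-\alpha_l)$ one at a time, each step raising the order of $k$-tonicity by $1$ (Lemma \ref{L-3.2} says $(x-\alpha)^{j-1}g$ is operator $(j+2l)$-tone, and in particular multiplying an operator monotone function by one linear factor $(x-\alpha)$ yields an operator $2$-tone function, and inductively multiplying an operator $j$-tone function of the form ``polynomial plus product-of-linear-factors times operator monotone'' by another linear factor yields an operator $(j+1)$-tone function). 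Adding the polynomial $P$ of degree $\le k-2$ does not affect $k$-tonicity since polynomials of degree $\le k-1$ are operator $k$-tone (from Example \ref{E-1.2}), hence in particular operator $k$-tone functions form a cone closed under adding such polynomials.

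The main obstacle — though really a bookkeeping one rather than a genuine difficulty — is making the ``for some (equivalently, any)'' quantifiers line up cleanly in condition (iii) for an \emph{arbitrary} tuple $\alpha_1,\dots,\alpha_{k-1}$, not just repeated points $\alpha,\dots,\alpha$; but this is precisely what the chain of auxiliary functions $g_1,g_2,\dots,g_{k-1}$ in the proof of (i)\,$\Rightarrow$\,(vi) in Theorem \ref{T-3.3} accomplishes, so I would simply cite that argument. Thus the whole proof reduces to a short paragraph that says: the equivalence of (i) and (ii) here is the equivalence of (i) and (v) in Theorem \ref{T-3.3} via \eqref{eq-3.3}, and the equivalence of (i) and (iii) here is the equivalence of (i) and (vi) there via \eqref{eq-3.4}, with $P$ a polynomial of degree at most $k-2$ arising as the Taylor polynomial (respectively the interpolating remainder) and $g$ the corresponding $(k-1)$st divided difference, which is operator monotone by L\"owner's theorem and conversely produces operator $k$-tonicity by repeated application of Lemma \ref{L-3.2}.
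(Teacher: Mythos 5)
Your proposal is correct and matches the paper's intent exactly: the paper offers no separate proof of Corollary~\ref{C-3.4}, presenting it as an immediate consequence of Theorem~\ref{T-3.3} and its proof, with (ii) being the rephrasing of condition (v) via \eqref{eq-3.3} and (iii) the rephrasing of condition (vi) via \eqref{eq-3.4}, the converse directions resting on Lemma~\ref{L-3.2} and the repeated use of the fact that $(x-\alpha)f$ is operator $(k+1)$-tone. Your handling of the polynomial term (its $k$th divided differences vanish by Example~\ref{E-1.2}) and of the ``for some/for any'' quantifiers is also consistent with the paper's argument.
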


The corollary tells us that the structure of operator $k$-tone functions is rather simple
with additive and multiplicative polynomial factors beyond operator monotone functions.
With $\alpha\in(a,b)$ fixed in (iii), there is a one-to-one correspondence, up to
polynomials of degree less than or equal to $k-2$, between operator $k$-tone functions on
$(a,b)$ and operator monotone functions on $(a,b)$.

It is also worthwhile to observe

\begin{cor}\label{C-3.5}
Let $f$ be an operator $k$-tone function on $(a,b)$, where
$-\infty\le a<b\le\infty$. Then 
\begin{itemize}
\item[\rm(a)] For any $\alpha\in (a,b)$, $(x-\alpha)f$ is operator $(k+1)$-tone on $(a,b)$.
\item[\rm(b)] For any $l\in \bN$, $f$ is operator $(k+2l)$-tone on $(a,b)$.
\item[\rm(c)] If $k$ is even, then ${d^k\over dt^k}\,f(A+tX)\big|_{t=0}\ge0$ for all
$A\in B(\cH)^{sa}(a,b)$ and $X\in B(\cH)^{sa}$.
\end{itemize}
\end{cor}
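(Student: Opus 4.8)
The plan is to derive Corollary \ref{C-3.5} directly from the structural results already at hand, namely Lemma \ref{L-3.2}, Theorem \ref{T-3.3}, and Corollary \ref{C-3.4}, together with the divided-difference identities recorded in the proof of Theorem \ref{T-3.3}. All three parts are essentially repackagings of facts established en route to those results, so the work is organizational rather than computational.

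For part (a), I would invoke condition (ii) of Corollary \ref{C-3.4}: fixing $\alpha\in(a,b)$, write $f(x)=P(x)+(x-\alpha)^{k-1}g(x)$ with $\deg P\le k-2$ and $g$ operator monotone on $(a,b)$. Then $(x-\alpha)f(x)=(x-\alpha)P(x)+(x-\alpha)^{k}g(x)$, where $(x-\alpha)P$ has degree at most $k-1$. Since any polynomial of degree at most $k-1$ is operator $(k+1)$-tone by Example \ref{E-1.2} and the remark after Definition \ref{D-1.4}, and $(x-\alpha)^{k}g$ is operator $(k+1)$-tone by Lemma \ref{L-3.2} (with $k+1$ in place of $k$ and $l=0$), the sum is operator $(k+1)$-tone. (This is exactly the intermediate claim proved inside the proof of Theorem \ref{T-3.3}, so one may even just cite it.) Part (b) is handled the same way: from $f(x)=P(x)+(x-\alpha)^{k-1}g(x)$, note $P$ has degree at most $k-2\le k+2l-2$, hence is operator $(k+2l)$-tone, while $(x-\alpha)^{k-1}g$ is operator $(k+2l)$-tone directly by Lemma \ref{L-3.2}. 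Adding gives the conclusion, since a sum of operator $m$-tone functions is operator $m$-tone (immediate from the definition via \eqref{eq-1.3}, which is linear in $f$).

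For part (c), assume $k$ is even. The point is that when $k$ is even the sign condition on $X$ can be dropped. By Theorem \ref{T-3.3}, $f$ is analytic and (iii)/(iv) give $\frac{d^k}{dt^k}f(A+tX)|_{t=0}\ge0$ for $X\ge0$; I want this for arbitrary self-adjoint $X$. Following the computation \eqref{eq-3.2} in the proof of Theorem \ref{T-3.3}, we have $\frac{d^k}{dt^k}f(A+tX)|_{t=0}=\sum_{m=k}^\infty c_m F_{k,m-k}(X,A-x_0I)$, and each $F_{k,m-k}(X,Y)$ is a sum of products of $k$ factors $X$ and $m-k$ factors $Y$; replacing $X$ by $-X$ multiplies every term by $(-1)^k=1$. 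Thus $\frac{d^k}{dt^k}f(A+tX)|_{t=0}=\frac{d^k}{dt^k}f(A+t(-X))|_{t=0}$. Alternatively and more slickly, for any self-adjoint $X$ write $X=X_+-X_-$ with $X_\pm\ge0$ having orthogonal ranges; then $\frac{d^k}{dt^k}f(A+tX)|_{t=0}$ decomposes compatibly, but the cleanest route is the parity observation on $F_{k,m-k}$. I would present it via the power-series formula \eqref{eq-3.2}, valid locally by Lemma \ref{L-3.1}, and then patch local estimates as in the proof of Lemma \ref{L-3.2}.

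The main obstacle, such as it is, lies in part (c): one must make sure the parity argument is legitimate in the operator-norm-convergent setting, i.e.\ that the term-by-term manipulation of the series \eqref{eq-3.2} is justified and that the derivative $\frac{d^k}{dt^k}f(A+tX)|_{t=0}$ is genuinely well defined in the operator norm for self-adjoint (not merely positive) $X$. This is handled exactly as in the proof of (i)$\Rightarrow$(iv) of Theorem \ref{T-3.3}: localize so that the spectrum of $A$ lies in a ball inside $(a,b)$ on which $f$ equals its Taylor series, so $f(A+tX)=\sum_m c_m(A-x_0I+tX)^m$ converges in norm uniformly for small $t$, differentiate $k$ times term by term, and read off \eqref{eq-3.2}; the parity of $k$ then does the rest. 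Parts (a) and (b) carry essentially no risk beyond citing the already-proven pieces correctly.
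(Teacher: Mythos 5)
Parts (a) and (b) of your proposal are correct and follow the paper's own route: decompose $f=P+(x-\alpha)^{k-1}g$ via Corollary \ref{C-3.4}, observe that the polynomial piece has vanishing higher divided differences, apply Lemma \ref{L-3.2} to the other piece, and use linearity of \eqref{eq-1.3} in $f$. Nothing to add there.

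Part (c), however, has a genuine gap. Your central claim is that since each $F_{k,m-k}(X,Y)$ contains $k$ factors of $X$, replacing $X$ by $-X$ multiplies every term by $(-1)^k=1$, so ${d^k\over dt^k}f(A+tX)\big|_{t=0}={d^k\over dt^k}f(A+t(-X))\big|_{t=0}$. That is true, but it only shows the $k$th derivative is an \emph{even} function of $X$; it says nothing about its sign when $X$ is self-adjoint but neither positive nor negative, which is exactly the case you need to handle. Positivity for $X\ge0$ plus evenness covers only $X\ge0$ and $X\le0$. The fallback you mention, writing $X=X_+-X_-$, does not rescue this: ${d^k\over dt^k}f(A+tX)\big|_{t=0}$ is a degree-$k$ homogeneous polynomial in $X$, not additive, so expanding $F_{k,m-k}(X_+-X_-,Y)$ produces $2^k$ cross terms of mixed signs that you cannot control term by term (already for $k=2$ the cross term $D^2f(A)(X_+,X_-)$ is not sign-definite a priori). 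The paper's proof instead goes through the integral formula \eqref{eq-3.1}: writing $f=P+(x-\alpha)^{k-1}g$ and taking $A$ diagonal, Daleckii--Krein together with Corollary \ref{C-1.9} give
$$
{d^k\over dt^k}\,f(A+tX)\Big|_{t=0}
=k!\int_{[-1,1]}D(\lambda)^{1/2}\bigl(D(\lambda)^{1/2}XD(\lambda)^{1/2}\bigr)^k
D(\lambda)^{1/2}(1-\lambda\alpha)^{k-1}\,d\mu(\lambda),
$$
an identity valid for \emph{any} self-adjoint $X$; the point is that for even $k$ the operator $\bigl(D(\lambda)^{1/2}XD(\lambda)^{1/2}\bigr)^k$ is an even power of a self-adjoint matrix, hence positive, so the integrand is positive without any sign assumption on $X$. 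The passage to $B(\cH)$ is then done by compressing to $P_n A P_n$, $P_n X P_n$ and using the strong convergence of the truncated derivatives via \eqref{eq-3.2} (your localization remarks about norm convergence of the Taylor series are fine for that last step, but they do not substitute for the missing positivity argument).
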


\begin{proof}
(a) has already been shown in the proof of Theorem \ref{T-3.3} (see the
paragraph after \eqref{eq-3.3}), and (b) is similarly shown by using Lemma \ref{L-3.2}.
Let us prove (c). For $A\in\bM_n^{sa}(a,b)$ and $X\in\bM_n^{sa}$ for any $n\in\bN$,
this is immediately seen from Corollary \ref{C-3.4}\,(iii) and \eqref{eq-3.1} in the proof
of Lemma \ref{L-3.2}. Next, let $A\in B(\cH)^{sa}(a,b)$ and $X\in B(\cH)^{sa}$.
Choose an orthogonal basis $\{e_n\}_{n=1}^\infty$ of $\cH$, and let $\cH_n$
be the linear span of $e_1,\dots,e_n$ and $P_n$ the orthogonal projection from $\cH$
onto $\cH_n$. Set $\tilde A_n:=P_nAP_n|_{\cH_n}$ and $\tilde X_n:=P_nXP_n|_{\cH_n}$,
which are considered as elements of $\bM_n^{sa}(a,b)$ and of $\bM_n^{sa}$, respectively.
By the matrix case shown above we have
${d^k\over dt^k}\,f(\tilde A_n+t\tilde X_n)\big|_{t=0}\ge0$.
Using \eqref{eq-3.2} one can easily verify that
$P_n\bigl({d^k\over dt^k}\,f(\tilde A_n+t\tilde X_n)\big|_{t=0}\bigr)P_n$ converges
strongly to ${d^k\over dt^k}\,f(A+tX)\big|_{t=0}$ as $n\to\infty$. Hence the
latter $k$th derivative is non-negative.
\end{proof}

\begin{remark}\rm
The phenomenon in the above (c) was observed in the paper \cite{Kr} for $k=2$. For the
proof one can alternatively use the higher derivative formula for infinite dimensional
operators in \cite{Pe} to verify the same expression as \eqref{eq-3.1}.
\end{remark}

It is well known that any operator monotone function on the whole line
$(-\infty,\infty)$ is a linear function $\alpha+\beta x$ with $\beta\ge0$; any operator
convex function on $(-\infty,\infty)$ is a quadratic function $\alpha+\beta x+\gamma x^2$
with $\gamma\ge0$. The following higher order extension is immediate from (iii) of
Corollary \ref{C-3.4}

\begin{cor}
Let $k\in\bN$. A real function on $(-\infty,\infty)$ is operator $k$-tone if and only if
it is a polynomial of degree less than or equal to $k$ with non-negative coefficient of
$x^k$.
\end{cor}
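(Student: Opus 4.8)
The plan is to derive this directly from part (iii) of Corollary \ref{C-3.4} applied with the interval $(a,b)=(-\infty,\infty)$. First I would handle the ``if'' direction: if $P(x)=\sum_{l=0}^k c_l x^l$ is a polynomial of degree at most $k$ with $c_k\ge0$, then $P$ is operator $k$-tone on all of $\bR$. This was already recorded in the text right after Example \ref{E-1.2} (``any polynomial function $\sum_{l=0}^k\alpha_l x^l$ with $\alpha_k\ge0$ is operator $k$-tone on the whole $\bR$''), since Example \ref{E-1.2} shows $\bigl(x^m\bigr)^{[k]}\equiv0$ for $m<k$ and equals $(B-A)^k\ge0$ for $m=k$, while for $m>k$ one needs $c_m=0$; so the only surviving contributions to $f^{[k]}(A,B;t_0,\dots,t_k)$ come from the degree-$k$ term and are nonnegative precisely when $c_k\ge0$. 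I would just cite that remark.

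For the ``only if'' direction, suppose $f$ is operator $k$-tone on $(-\infty,\infty)$. By Corollary \ref{C-3.4}(iii), picking any $\alpha_1,\dots,\alpha_{k-1}$ (say all equal to $0$), there exist a polynomial $P$ of degree at most $k-2$ and an operator monotone function $g$ on $(-\infty,\infty)$ with $f(x)=P(x)+x^{k-1}g(x)$. Now invoke the classical fact, already quoted in the excerpt, that an operator monotone function on the whole real line is affine: $g(x)=\beta+\gamma x$ with $\gamma\ge0$. Substituting, $f(x)=P(x)+\beta x^{k-1}+\gamma x^k$, which is a polynomial of degree at most $k$ whose coefficient of $x^k$ is $\gamma\ge0$. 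This closes the argument.

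I do not anticipate a genuine obstacle here; the statement is essentially a one-line corollary of the structure theorem in Corollary \ref{C-3.4} together with the known classification of operator monotone functions on $\bR$. The only point requiring a moment's care is making sure the ``if'' direction really is covered by the earlier remark rather than needing a fresh computation, and confirming that the $C^{k-2}$ regularity hypothesis in Corollary \ref{C-3.4}(ii) is automatic here (it is, since operator $k$-tonicity already forces analyticity by Lemma \ref{L-3.1}, so one may freely use either (ii) or (iii)). I would write the proof in two or three sentences, citing Corollary \ref{C-3.4}(iii) and the classification of operator monotone functions on $(-\infty,\infty)$, and referring back to the remark after Example \ref{E-1.2} for the converse.
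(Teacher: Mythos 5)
Your proposal is correct and follows essentially the same route as the paper: the paper likewise derives the corollary as an immediate consequence of Corollary \ref{C-3.4}\,(iii) together with the classical fact that operator monotone functions on the whole line are affine with non-negative slope, and the ``if'' direction is indeed the remark following Example \ref{E-1.2}. No gaps.
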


The following Corollary is also immediate from Theorem \ref{T-3.3}.

\begin{cor}
Let $f$ be a real function on $(a,b)$, where $-\infty\le a<b\le\infty$. Let
$k,m\in\bN$ with $m<k$. Then the following conditions are equivalent:
\begin{itemize}
\item[\rm(i)] $f$ is operator $k$-tone on $(a,b)$;
\item[\rm(ii)] $f$ is $C^m$ on $(a,b)$ and $f^{[m]}(x,\alpha_1,\dots,\alpha_m)$ is operator
$(k-m)$-tone on $(a,b)$ for every $\alpha_1,\dots,\alpha_m\in(a,b)$;
\item[\rm(iii)] $f$ is $C^{m-1}$ on $(a,b)$ and $f^{[m]}(x,\alpha,\dots,\alpha)$ is
operator $(k-m)$-tone on $(a,b)$ for some $\alpha\in(a,b)$ {\rm(}with continuation of value
at $x=\alpha${\rm)}.
\end{itemize}
\end{cor}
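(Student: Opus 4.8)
The plan is to reduce the whole equivalence to the structural description of operator $k$-tone functions already extracted from Theorem~\ref{T-3.3}, i.e.\ to Corollary~\ref{C-3.4} and Lemma~\ref{L-3.2}, together with two elementary observations: a polynomial of degree $\le k-1$ is operator $k$-tone (Example~\ref{E-1.2}), and the set of operator $k$-tone functions on $(a,b)$ is closed under addition. I would then prove the cycle (i)$\Rightarrow$(ii)$\Rightarrow$(iii)$\Rightarrow$(i). The single tool used repeatedly is the classical divided-difference identity
$$
f^{[m]}(x,\alpha_1,\dots,\alpha_m)={f(x)-L(x)\over(x-\alpha_1)\cdots(x-\alpha_m)},
$$
where $L$ is the (possibly Hermite) interpolation polynomial of degree $\le m-1$ agreeing with $f$ at $\alpha_1,\dots,\alpha_m$ counted with multiplicity; for $f\in C^{m-1}$ this is well defined even when some nodes coincide, the value at a node being taken by continuity (cf.\ \cite{Do,Hi}).

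For (i)$\Rightarrow$(ii): an operator $k$-tone $f$ is analytic by Lemma~\ref{L-3.1}, hence $C^m$. Given arbitrary $\alpha_1,\dots,\alpha_m\in(a,b)$, I enlarge the list to $\alpha_1,\dots,\alpha_{k-1}\in(a,b)$ and use the ``any'' form of Corollary~\ref{C-3.4}(iii) to write $f=P+\bigl(\prod_{l=1}^{k-1}(x-\alpha_l)\bigr)g$ with $\deg P\le k-2$ and $g$ operator monotone. Polynomial division of $P$ by the monic polynomial $\prod_{l=1}^m(x-\alpha_l)$ gives $P=L+\bigl(\prod_{l=1}^m(x-\alpha_l)\bigr)R$ with $\deg R\le k-m-2$, and since $f-P$ is divisible by $\prod_{l=1}^{k-1}(x-\alpha_l)$, hence by $\prod_{l=1}^m(x-\alpha_l)$, the remainder $L$ is simultaneously the interpolation polynomial of $f$ at $\alpha_1,\dots,\alpha_m$. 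Dividing through, I obtain
$$
f^{[m]}(x,\alpha_1,\dots,\alpha_m)=R(x)+\Bigl(\prod_{l=m+1}^{k-1}(x-\alpha_l)\Bigr)g(x),
$$
which is precisely the shape in Corollary~\ref{C-3.4}(iii) for the order $k-m$: a polynomial of degree $\le(k-m)-2$ plus a product of $(k-m)-1$ linear factors times an operator monotone function. Hence $f^{[m]}(x,\alpha_1,\dots,\alpha_m)$ is operator $(k-m)$-tone.

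The implication (ii)$\Rightarrow$(iii) is immediate: specialize to $\alpha_1=\dots=\alpha_m=\alpha$ and note $C^m\subset C^{m-1}$. For (iii)$\Rightarrow$(i): since $f\in C^{m-1}$, the Taylor remainder form of the identity above reads $f(x)=\sum_{l=0}^{m-1}\frac{f^{(l)}(\alpha)}{l!}(x-\alpha)^l+(x-\alpha)^m h(x)$, where $h(x):=f^{[m]}(x,\alpha,\dots,\alpha)$ is operator $(k-m)$-tone by hypothesis. Applying Corollary~\ref{C-3.4}(ii) to $h$ at the point $\alpha$ in the order $k-m$ yields $h(x)=Q(x)+(x-\alpha)^{(k-m)-1}g(x)$ with $\deg Q\le(k-m)-2$ and $g$ operator monotone; substituting and collecting polynomial terms gives $f(x)=\widetilde P(x)+(x-\alpha)^{k-1}g(x)$ with $\deg\widetilde P\le k-2$. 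Now $(x-\alpha)^{k-1}g$ is operator $k$-tone by Lemma~\ref{L-3.2} and $\widetilde P$ is operator $k$-tone as a polynomial of degree $<k$, so $f$ is operator $k$-tone.

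The main obstacle here is not conceptual but careful bookkeeping: one must keep track of degrees and of the multiplicities of repeated nodes so that the Hermite/Lagrange identity above is invoked correctly, and one must check the degenerate instances — in particular $k-m=1$, where the cited clauses of Corollary~\ref{C-3.4} become void (they then merely assert that the relevant divided difference is operator monotone) and the displayed formulas collapse consistently, as well as $k-m=2$. One also has to make the ``some/any'' quantifiers line up: (i)$\Rightarrow$(ii) needs every choice of $\alpha_1,\dots,\alpha_m$, which is why the ``any'' form of Corollary~\ref{C-3.4}(iii) is used, whereas in (iii)$\Rightarrow$(i) a single $\alpha$ suffices. Granting Corollary~\ref{C-3.4} and Lemma~\ref{L-3.2}, the statement is indeed immediate from Theorem~\ref{T-3.3}.
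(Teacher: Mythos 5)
Your argument is correct and is essentially the paper's own route: the authors state this corollary with no proof beyond ``immediate from Theorem~\ref{T-3.3},'' and what you have written is a careful unwinding of exactly that, using the decompositions of Corollary~\ref{C-3.4} and Lemma~\ref{L-3.2} (equivalently, the identity $\bigl(f^{[m]}(\cdot,\alpha_1,\dots,\alpha_m)\bigr)^{[k-m-1]}(x,\beta_1,\dots,\beta_{k-m-1})=f^{[k-1]}(x,\alpha_1,\dots,\alpha_m,\beta_1,\dots,\beta_{k-m-1})$ that drives the (v)/(vi) clauses of Theorem~\ref{T-3.3}). Your bookkeeping of degrees, of the degenerate cases $k-m=1,2$, and of the some/any quantifiers is accurate, so nothing further is needed.
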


Let $\cF(a,b)$ denote the space of all real functions on $(a,b)$, which is a locally
convex topological vector space equipped with the pointwise convergence topology.
For each $k\in\bN$ we denote by $\cP^{(k)}(a,b)$ the set of all operator
$k$-tone functions on $(a,b)$.

\begin{prop}\label{P-3.9}
The set $\cP^{(k)}(a,b)$ is a closed convex cone in $\cF(a,b)$ for every $k\in\bN$ and
$$
\cP^{(1)}(a,b)\subsetneqq\cP^{(3)}(a,b)\subsetneqq\cP^{(5)}(a,b)\subsetneqq\cdots,
$$
$$
\cP^{(2)}(a,b)\subsetneqq\cP^{(4)}(a,b)\subsetneqq\cP^{(6)}(a,b)\subsetneqq\cdots.
$$
\end{prop}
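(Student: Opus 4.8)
The plan is to dispatch the three assertions --- convexity of the cone, its closedness, and the two chains of strict inclusions --- separately, each being a harvest of results already proved.

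For the first two I would not work from Definition~\ref{D-1.4} directly (its continuity clause is inconvenient in the pointwise topology) but from the equivalent characterization in Theorem~\ref{T-3.3}: a real function $f$ lies in $\cP^{(k)}(a,b)$ if and only if it is matrix $k$-tone of order $n$ for every $n\in\bN$, that is, if and only if \eqref{eq-1.3} holds for every $n$, every $A,B\in\bM_n^{sa}(a,b)$ with $A\le B$, and every $0=t_0<t_1<\dots<t_k=1$. Fixing one such configuration $(n,A,B,t_0,\dots,t_k)$ together with a vector $v\in\bC^n$, consider
$$
\Phi(f):=\Biggl\langle v,\ \sum_{l=0}^k(-1)^{k-l}\!\!\prod_{\substack{0\le i<j\le k\\ i,j\ne l}}\!\!(t_j-t_i)\,f(X_l)\,v\Biggr\rangle,\qquad X_l:=(1-t_l)A+t_lB.
$$
Because $f(X_l)$ is read off from the spectral decomposition of the fixed Hermitian matrix $X_l$, the quantity $\Phi(f)$ depends only on the values of $f$ at the finitely many eigenvalues of $X_0,\dots,X_k$ and is real-linear in them; thus $\Phi$ is a continuous linear functional on $\cF(a,b)$ for the pointwise topology. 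Hence each set $\{f\in\cF(a,b):\Phi(f)\ge0\}$ is a closed convex cone, and $\cP^{(k)}(a,b)$, being the intersection of all of them as the configuration and $v$ vary, is a closed convex cone too. (Phrased with limits: if $f_m\to f$ pointwise and each $f_m$ is matrix $k$-tone of every order, then $f_m(X_l)\to f(X_l)$ entrywise, so $f$ is again matrix $k$-tone of every order, and then $f\in\cP^{(k)}(a,b)$ by Theorem~\ref{T-3.3} --- the continuity of $f$ being automatic from Theorem~\ref{T-2.1}\,(a) applied with $n=2$.)

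For the chains, the inclusion $\cP^{(k)}(a,b)\subseteq\cP^{(k+2)}(a,b)$ is exactly Corollary~\ref{C-3.5}\,(b) with $l=1$, and iterating gives $\cP^{(1)}\subseteq\cP^{(3)}\subseteq\cP^{(5)}\subseteq\cdots$ and $\cP^{(2)}\subseteq\cP^{(4)}\subseteq\cdots$. To see strictness at each stage, fix $c\in(a,b)$ and set $h_k(x):=(x-c)^{k+1}$. Expanding about an arbitrary $\alpha\in(a,b)$,
$$
h_k(x)=\sum_{l=0}^{k}\binom{k+1}{l}(\alpha-c)^{k+1-l}(x-\alpha)^l+(x-\alpha)^{k+1},
$$
so $h_k$ equals its Taylor polynomial of degree $\le k$ at $\alpha$ plus $(x-\alpha)^{k+1}g(x)$ with the constant --- hence operator monotone --- function $g\equiv1$; by Corollary~\ref{C-3.4}\,(ii) (with $k+2$ in place of $k$) this gives $h_k\in\cP^{(k+2)}(a,b)$. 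On the other hand $h_k^{(k)}(x)=(k+1)!\,(x-c)$ is strictly negative on the nonempty interval $(a,c)$, and a $C^k$ function is $k$-tone in the numerical sense only if its $k$-th derivative is nonnegative throughout $(a,b)$; since operator $k$-tonicity obviously entails numerical $k$-tonicity, $h_k\notin\cP^{(k)}(a,b)$. Hence $\cP^{(k)}(a,b)\subsetneqq\cP^{(k+2)}(a,b)$ for every $k$, which yields all the strict inclusions.

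I do not expect a genuine obstacle: the statement merely repackages earlier material. The two spots where care is needed are the ones just flagged --- routing the closedness proof through the continuity-free form of the $k$-tonicity condition (Theorem~\ref{T-3.3}), so as not to trip over the continuity requirement in the definition of operator $k$-tonicity, and recording why the implicit passage $f_m(X_l)\to f(X_l)$ is legitimate, namely that functional calculus at a fixed Hermitian matrix involves only finitely many values of the function.
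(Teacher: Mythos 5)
Your proof is correct and follows essentially the same route as the paper: closedness and convexity come from the pointwise-linear form of condition \eqref{eq-1.3} (the paper intersects the cones $\cP^{(k)}_n(a,b)$ of matrix $k$-tone functions of order $n$, each closed under pointwise limits by Lemma \ref{L-1.1}), the inclusions come from Corollary \ref{C-3.5}\,(b), and strictness comes from an explicit polynomial witness. The only difference is your choice of witness $(x-c)^{k+1}$, excluded from $\cP^{(k)}(a,b)$ via the sign of its $k$th derivative, versus the paper's $x^k$ on $(-1,1)$, excluded from $\cP^{(k-1)}(-1,1)$ via the sign of a divided difference --- your version has the small advantage of working verbatim on an arbitrary interval $(a,b)$.
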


\begin{proof}
Lemma \ref{L-1.1} clearly implies that the set $\cP^{(k)}_n(a,b)$ of all matrix $k$-tone
functions of order $n$ on $(a,b)$ is a positive cone and is closed for the pointwise
convergence. Hence $\cP^{(k)}(a,b)=\bigcap_{n\geq 0}\cP^{(k)}_n(a,b)$ has the same
properties.

The inclusion have already been seen in Corollary \ref{C-3.5}\,(b). Moreover,
for each $k\in\bN$ with $k\ge2$, let $f(x)=(x-\alpha)^k$ with $\alpha\in(a,b)$. Then
$$
f^{[k-1]}(x_1,\dots,x_k)=\sum_{i=1}^k(x_i-\alpha),\qquad
f^{[k+1]}(x_1,\dots,x_{k+2})\equiv0.
$$
Hence $f\in\cP^{(k+1)}(a,b)$ but $f\not\in\cP^{(k-1)}(a,b)$ since
$f^{[k-1]}(x_1,\dots,x_k)$ is negative for $x_1,\dots,x_k\in(a,\alpha)$. (More intrinsic
differences among $\cP^{(k)}(a,b)$'s will be seen from Proposition \ref{P-4.3} and
examples in Section \ref{sec-6}.)
\end{proof}

Recall \cite[p.\ 131]{Do} that $n$-monotone functions have the following local property:
Let $a<c<b<d$ and $f$ be a real function on $(a,d)$. If $f|_{(a,b)}$ and $f|_{(c,d)}$ are
$n$-monotone, then so is $f$ on $(a,d)$. This property is essential in the proof
\cite[pp.\ 83--84]{Do} of the fact that (d) of Theorem \ref{T-2.1} with $k=1$ is a
sufficient condition for $n$-monotone functions. This property for $n$-convex
functions is still open while that for $2$-convex functions was proved in \cite{HT1}.
It is immediate from (vii) or (v) of Theorem \ref{T-3.3} that operator $k$-tone functions
have a similar local property for every $k\in\bN$.

\section{Operator $k$-tone functions on $(-1,1)$}\label{sec-4}

In this section we discuss operator $k$-tone functions restricted on the domain interval
$(-1,1)$. In the next theorem we show further characterizations of such functions by
using Theorem \ref{T-3.3} and the integral representation of operator monotone functions
in Theorem \ref{T-1.8} (in fact, \eqref{eq-1.5} is \eqref{eq-4.1} when $k=1$).

\begin{thm}\label{T-4.1}
Let $f$ be a real function on $(-1,1)$, and let $k\in\bN$. Then the following
conditions {\rm(i)}--{\rm(iii)} are equivalent:
\begin{itemize}
\item[\rm(i)] $f$ is operator $k$-tone on $(-1,1)$;
\item[\rm(ii)] $f$ is $C^{k-1}$ on $(-1,1)$ and there exists a finite positive measure
$\mu$ on $[-1,1]$ such that for any choice of $\alpha\in(-1,1)$,
\begin{equation}\label{eq-4.1}
f(x)=\sum_{l=0}^{k-1}{f^{(l)}(\alpha)\over l!}(x-\alpha)^l
+\int_{[-1,1]}{(x-\alpha)^k\over(1-\lambda x)(1-\lambda\alpha)^k}\,d\mu(\lambda),
\quad x\in(-1,1);
\end{equation}
\item[\rm(iii)] $f$ is $C^k$ on $(-1,1)$ and there exists a finite
positive measure $\mu$ on $[-1,1]$ such that
\begin{equation}\label{eq-4.2}
f^{[k]}(x_1,x_2,\dots,x_{k+1})=\int_{[-1,1]}
{1\over(1-\lambda x_1)(1-\lambda x_2)\cdots(1-\lambda x_{k+1})}\,d\mu(\lambda)
\end{equation}
for every $x_1,x_2,\dots,x_{k+1}\in(-1,1)$.
\end{itemize}

Moreover, in the above situation, the measures $\mu$ in {\rm(ii)} and in {\rm(iii)} are
unique and same, and the following hold for every $m>k$:
\begin{itemize}
\item[\rm(a)] For any choice of $\alpha\in(-1,1)$,
\begin{equation}\label{eq-4.3}
f(x)=\sum_{l=0}^{m-1}{f^{(l)}(\alpha)\over l!}(x-\alpha)^l
+\int_{[-1,1]}{(x-\alpha)^m\lambda^{m-k}\over(1-\lambda x)(1-\lambda\alpha)^m}
\,d\mu(\lambda),\quad x\in(-1,1).
\end{equation}
\item[\rm(b)] For every $x_1,x_2,\dots,x_{m+1}\in(-1,1)$,
$$
f^{[m]}(x_1,x_2,\dots,x_{m+1})=\int_{[-1,1]}
{\lambda^{m-k}\over(1-\lambda x_1)(1-\lambda x_2)\cdots(1-\lambda x_{m+1})}
\,d\mu(\lambda).
$$
\end{itemize}
\end{thm}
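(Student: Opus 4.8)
The strategy is to prove the equivalence of (i)--(iii) by the cycle (i)\,$\Rightarrow$\,(iii)\,$\Rightarrow$\,(ii)\,$\Rightarrow$\,(i) and then to read off (a) and (b) from the representation \eqref{eq-4.1}. The three ingredients are the structural description of operator $k$-tone functions (Corollary \ref{C-3.4}), the integral representation of operator monotone functions with an $\alpha$-independent measure (Theorem \ref{T-1.8}), and the divided-difference computation of Corollary \ref{C-1.9}. The organising remark is that condition (iii) mentions no base point $\alpha$, so it is the natural place to pin down a single universal measure $\mu$.

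For (i)\,$\Rightarrow$\,(iii): assuming $f$ operator $k$-tone, Lemma \ref{L-3.1} makes $f$ analytic, and Corollary \ref{C-3.4} gives, for each $\alpha\in(-1,1)$, an operator monotone $g_\alpha$ on $(-1,1)$ with $f=P_\alpha+(x-\alpha)^{k-1}g_\alpha$ and $\deg P_\alpha\le k-2$. Letting $\nu_\alpha$ be the Theorem \ref{T-1.8} measure of $g_\alpha$ and using $P_\alpha^{[k]}\equiv0$, Corollary \ref{C-1.9} with $m=k$ gives $f^{[k]}(x_1,\dots,x_{k+1})=\int_{[-1,1]}\frac{(1-\lambda\alpha)^{k-1}}{(1-\lambda x_1)\cdots(1-\lambda x_{k+1})}\,d\nu_\alpha(\lambda)$, so that $d\mu:=(1-\lambda\alpha)^{k-1}\,d\nu_\alpha$ — a finite positive measure, since $1-\lambda\alpha$ stays between two positive constants on $[-1,1]$ — yields \eqref{eq-4.2}. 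The left side of \eqref{eq-4.2} does not involve $\alpha$, and a finite positive measure obeying \eqref{eq-4.2} is unique: setting all $x_i=x$ and expanding $\int(1-\lambda x)^{-(k+1)}\,d\mu$ in powers of $x$ recovers every moment $\int\lambda^j\,d\mu$, and these determine $\mu$ on the compact set $[-1,1]$. Hence $(1-\lambda\alpha)^{k-1}\nu_\alpha$ is one and the same $\mu$ for all $\alpha$.

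For (iii)\,$\Rightarrow$\,(ii): put $g(x):=f^{[k-1]}(x,\alpha,\dots,\alpha)$, which is defined and $C^1$ since $f\in C^k$, and which by \eqref{eq-3.3} satisfies $f=\sum_{l=0}^{k-2}\frac{f^{(l)}(\alpha)}{l!}(x-\alpha)^l+(x-\alpha)^{k-1}g$ with $g(\alpha)=f^{(k-1)}(\alpha)/(k-1)!$. Feeding the arguments $(x,\alpha,\dots,\alpha,y)$ into \eqref{eq-4.2} and using the symmetry and recursion of divided differences gives $g^{[1]}(x,y)=\int_{[-1,1]}\frac{d\nu(\lambda)}{(1-\lambda x)(1-\lambda y)}$ with $d\nu:=(1-\lambda\alpha)^{-(k-1)}\,d\mu$; the Pick matrices $\bigl[g^{[1]}(x_i,x_j)\bigr]$ are positive semidefinite, the quadratic form being $\int\bigl|\sum_i c_i/(1-\lambda x_i)\bigr|^2\,d\nu\ge0$, so $g$ is operator monotone by L\"owner's theorem and $\nu$ is its Theorem \ref{T-1.8} measure by the uniqueness above. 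Substituting $g(x)=g(\alpha)+\int_{[-1,1]}\frac{x-\alpha}{(1-\lambda x)(1-\lambda\alpha)}\,d\nu(\lambda)$ into the displayed formula for $f$ and using $\frac{d\nu}{1-\lambda\alpha}=\frac{d\mu}{(1-\lambda\alpha)^k}$ produces \eqref{eq-4.1} with the same $\mu$ for every $\alpha$. For (ii)\,$\Rightarrow$\,(i): in \eqref{eq-4.1} the first sum is a polynomial of degree $\le k-1$, hence operator $k$-tone by Example \ref{E-1.2}, while the integral equals $(x-\alpha)^{k-1}h(x)$ with $h(x):=\int_{[-1,1]}\frac{x-\alpha}{(1-\lambda x)(1-\lambda\alpha)^k}\,d\mu(\lambda)$; a direct computation gives $h^{[1]}(x,y)=\int_{[-1,1]}\frac{d\mu(\lambda)}{(1-\lambda x)(1-\lambda y)(1-\lambda\alpha)^{k-1}}$, so $h$ has positive semidefinite Pick matrices and is operator monotone, and then Lemma \ref{L-3.2} (with $l=0$) together with the cone property (Proposition \ref{P-3.9}) makes $f$ operator $k$-tone. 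Finally, $\mu$ in (ii) is unique and coincides with the one in (iii), because taking the $k$th divided difference of \eqref{eq-4.1} annihilates the polynomial and, by the kernel computation in the proof of Corollary \ref{C-1.9}, reproduces \eqref{eq-4.2} with the identical measure.

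It remains to prove (b) and (a) for $m\ge k$. For (b), apply Corollary \ref{C-1.9} to $(x-\alpha)^{k-1}g$ for arbitrary $m\ge k$ (again $P_\alpha^{[m]}\equiv0$) with $\nu=(1-\lambda\alpha)^{-(k-1)}\mu$ as above; this gives $f^{[m]}(x_1,\dots,x_{m+1})=\int_{[-1,1]}\frac{\lambda^{m-k}}{(1-\lambda x_1)\cdots(1-\lambda x_{m+1})}\,d\mu$, and specializing all $x_i=\alpha$ yields the moment identity $\int_{[-1,1]}\lambda^{m-k}(1-\lambda\alpha)^{-(m+1)}\,d\mu=f^{(m)}(\alpha)/m!$. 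For (a), start from \eqref{eq-4.1} and use the elementary identity $\frac{1}{1-u}=\sum_{j=0}^{m-k-1}u^j+\frac{u^{m-k}}{1-u}$ with $u=\lambda(x-\alpha)/(1-\lambda\alpha)$ (legitimate since $1-\lambda x\ne0$) to split $\frac{(x-\alpha)^k}{(1-\lambda x)(1-\lambda\alpha)^k}$ into $\sum_{j=0}^{m-k-1}\frac{\lambda^j(x-\alpha)^{k+j}}{(1-\lambda\alpha)^{k+j+1}}+\frac{\lambda^{m-k}(x-\alpha)^m}{(1-\lambda x)(1-\lambda\alpha)^m}$; integrating against $\mu$ and identifying, via the moment identity from (b), the coefficient of $(x-\alpha)^{k+j}$ as $f^{(k+j)}(\alpha)/(k+j)!$ yields \eqref{eq-4.3}. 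The main difficulty I anticipate is not a single hard estimate but the bookkeeping needed to turn the $\alpha$-dependent auxiliary measures $\nu_\alpha$ into one $\alpha$-free $\mu$; routing the argument through condition (iii), where $\alpha$ is absent, is exactly what makes this manageable.
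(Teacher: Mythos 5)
Your proposal is correct, and while it uses the same toolkit as the paper (Corollary \ref{C-3.4}, Theorem \ref{T-1.8}, Corollary \ref{C-1.9}, Lemma \ref{L-3.2}), it closes the cycle of implications in the opposite direction and, more interestingly, handles the $\alpha$-independence of $\mu$ by a different mechanism. The paper proves (i)\,$\Rightarrow$\,(ii) first and must then show directly that the representing measure in \eqref{eq-4.1} does not depend on $\alpha$; it does this by an algebraic computation showing that the difference of the kernels for two base points $\alpha,\tilde\alpha$ is a polynomial of degree at most $k-1$ in $x$ with $\mu$-integrable coefficients, after which Taylor asymptotics at $\tilde\alpha$ pin down the polynomial part. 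You instead go to (iii) first, observe that \eqref{eq-4.2} contains no base point, and extract $\alpha$-independence for free from the uniqueness of a finite positive measure satisfying \eqref{eq-4.2} (which you get from moments; the paper gets the same uniqueness from density of the span of $\lambda\mapsto(1-\lambda x)^{-1}$ in $C([-1,1])$ --- both are fine). This is arguably the cleaner organisation, at the cost of having to prove (iii)\,$\Rightarrow$\,(ii) explicitly, which the paper never needs: your reconstruction of $g(x)=f^{[k-1]}(x,\alpha,\dots,\alpha)$ from its L\"owner kernel via $g^{[1]}(x,y)=f^{[k]}(x,\alpha,\dots,\alpha,y)$ and L\"owner's theorem is correct, as is the identification of its Theorem \ref{T-1.8} measure with $(1-\lambda\alpha)^{-(k-1)}\,d\mu$. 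Your (ii)\,$\Rightarrow$\,(i) also leans on L\"owner's theorem applied to $h(x)=\int(x-\alpha)(1-\lambda x)^{-1}(1-\lambda\alpha)^{-k}\,d\mu(\lambda)$ rather than on the Daleckii--Krein positivity computation that the paper reuses from Lemma \ref{L-3.2}; both are legitimate since $h^{[1]}(x,y)=\int(1-\lambda x)^{-1}(1-\lambda y)^{-1}(1-\lambda\alpha)^{-(k-1)}\,d\mu(\lambda)$ visibly has positive semidefinite L\"owner matrices. Finally, your derivation of (a) via the finite geometric series in $u=\lambda(x-\alpha)/(1-\lambda\alpha)$ plus the moment identities is a small detour compared with the paper's one-line specialization $x_1=x$, $x_2=\dots=x_{m+1}=\alpha$ in (b), but it checks out.
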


\begin{proof}
(i)\,$\Rightarrow$\,(ii).\enspace For each $\alpha\in(-1,1)$ use Corollary
\ref{C-3.4}\,(iii) to have an operator monotone function $g$ on $(-1,1)$, which is
represented by Theorem \ref{T-1.8} as in \eqref{eq-1.5} with a representing measure $\mu$.
Replacing $(1-\lambda\alpha)^{k-1}d\mu(\lambda)$ with $d\mu(\lambda)$ we have expression
\eqref{eq-4.1}. We next prove that the measure $\mu$ does not depend on $\alpha$.
Let $\tilde\alpha\in(-1,1)$ be arbitrary. Since
$$
{(x-\alpha)^k\over(1-\lambda x)(1-\lambda\alpha)^k}
-{(x-\tilde\alpha)^k\over(1-\lambda x)(1-\lambda\tilde\alpha)^k}
={(x-\alpha)^k(1-\lambda\tilde\alpha)^k-(x-\tilde\alpha)^k(1-\lambda\alpha)^k
\over(1-\lambda x)(1-\lambda\alpha)^k(1-\lambda\tilde\alpha)^k}
$$
and the numerator of the above right-hand side is zero when $x=1/\lambda$, we notice that
the above expression is written in the form
$$
P_{k-1}(x):=\sum_{l=0}^{k-1}a_l(\lambda)x^l,
$$
where $a_l(\lambda)$, $0\le l\le k-1$, are functions of $\lambda$ ($\alpha,\tilde\alpha$
being constants). Since $P_{k-1}(0)$, $P_{k-1}'(0)$, $\dots$, $P_{k-1}^{(k-1)}(0)$ are
functions of $\lambda$ on $[-1,1]$ integrable with respect to $\mu$, we notice that
$a_l(\lambda)$, $0\le l\le k-1$, are integrable with respect to $\mu$. Therefore, we have
$$
f(x)=(\mbox{a polynomial of at most degree $k-1$})
+\int_{[-1,1]}{(x-\tilde\alpha)^k\over(1-\lambda x)(1-\lambda\tilde\alpha)^k}
\,d\mu(\lambda).
$$
Since the above integral term is $o\bigl((x-\tilde\alpha)^{k-1}\bigr)$, the first
polynomial term must be given by the Taylor formula. Thus \eqref{eq-4.1} is valid with
$\tilde\alpha$ in place of $\alpha$.

The proof of (ii)\,$\Rightarrow$\,(iii), as well as that of (b) from \eqref{eq-4.1}, is
included in the proof of Corollary \ref{C-1.9}, and (iii)\,$\Rightarrow$\,(i) was actually
shown in the proof of Lemma \ref{L-3.2}. Moreover, (a) follows from (b) by letting
$x_1=x$ and $x_2=\dots=x_{m+1}=\alpha$.

It remains to prove the uniqueness of $\mu$ in (iii). Recall that the linear span of
functions $h_x(\lambda):=1/(1-\lambda x)$ where $x\in (-1,1)$ is dense in $C([-1,1])$,
the space of continuous functions on $[-1,1]$. So, letting $x_1=x$ and
$x_2=\dots=x_{n+2}=0$ in \eqref{eq-4.2} one can easily see that $\mu$ is unique.
\end{proof}

Of course the integral term of \eqref{eq-4.3} is the $m$th remainder term of the
Taylor series of $f$ at $\alpha$. This remainder term converges to $0$ as
$m\to\infty$ for $|x-\alpha|<1-|\alpha|$ by Lemma \ref{L-3.1} 
(this follows also by the Lebesgue convergence theorem).

We call the finite measure $\mu$ on $[-1,1]$ in (ii) and (iii) of Theorem \ref{T-4.1} the
{\it representing measure} of $f$. Theorem \ref{T-4.1}\,(a) says that if
$f\in\cP^{(k)}(-1,1)$ with the representing measure $\mu$ and if $m>k$ and $m-k$ is even,
then $f\in\cP^{(m)}(-1,1)$ with the representing measure $\lambda^{m-k}\,d\mu(\lambda)$.
In this connection we show

\begin{prop}\label{P-4.2}
Let $f\in\cP^{(k)}(-1,1)$ with the representing measure $\mu$. Then
\begin{itemize}
\item[\rm(1)] $f\in\cP^{(k+1)}(-1,1)$ if and only if $\mu$ is supported in $[0,1]$. In this
case, $f\in\cP^{(m)}(-1,1)$ for all $m>k$.
\item[\rm(2)] $-f\in\cP^{(k+1)}(-1,1)$ if and only if $\mu$ is supported in $[-1,0]$.
In this case, $(-1)^{m-k}f\in\cP^{(m)}(-1,1)$ for all $m>k$.
\end{itemize}
\end{prop}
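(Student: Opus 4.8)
The plan is to read everything off the integral formula for divided differences in Theorem~\ref{T-4.1}(b), the characterization (iii)$\Leftrightarrow$(i) of Theorem~\ref{T-4.1}, and the uniqueness of the representing measure. I would first note that, since $f\in\cP^{(k)}(-1,1)$, it is analytic on $(-1,1)$ by Lemma~\ref{L-3.1}, hence $C^m$ for every $m\in\bN$; thus whenever I invoke condition (iii) of Theorem~\ref{T-4.1} with $k$ replaced by some $m\ge k$, the smoothness hypothesis is automatic and only the integral representation of $f^{[m]}$ needs checking. That representation is already furnished by Theorem~\ref{T-4.1}(b):
$$
f^{[m]}(x_1,\dots,x_{m+1})
=\int_{[-1,1]}{\lambda^{m-k}\over(1-\lambda x_1)\cdots(1-\lambda x_{m+1})}\,d\mu(\lambda),
\qquad m\ge k.
$$

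For part (1) I would argue as follows. If $\mu$ is supported in $[0,1]$, then for every $m>k$ the measure $\lambda^{m-k}\,d\mu(\lambda)$ is again a finite positive measure on $[-1,1]$ (it lives on $[0,1]$ and is dominated by $\mu$), and the displayed formula exhibits it as a representing measure for $f^{[m]}$ in the sense of Theorem~\ref{T-4.1}(iii) with $m$ in place of $k$; hence $f\in\cP^{(m)}(-1,1)$, which in particular gives $f\in\cP^{(k+1)}(-1,1)$ together with the final assertion of (1). For the converse I would assume $f\in\cP^{(k+1)}(-1,1)$, take the finite positive measure $\nu$ on $[-1,1]$ provided by Theorem~\ref{T-4.1}(iii) at order $k+1$, namely
$$
f^{[k+1]}(x_1,\dots,x_{k+2})
=\int_{[-1,1]}{1\over(1-\lambda x_1)\cdots(1-\lambda x_{k+2})}\,d\nu(\lambda),
$$
compare it with the displayed formula at $m=k+1$, and apply the uniqueness argument from the proof of Theorem~\ref{T-4.1} (the functions $h_x(\lambda)=(1-\lambda x)^{-1}$, $x\in(-1,1)$, span a dense subspace of $C([-1,1])$, so one sets $x_1=x$, $x_2=\dots=x_{k+2}=0$) to conclude $\lambda\,d\mu(\lambda)=d\nu(\lambda)$ as measures on $[-1,1]$. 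Since $\nu\ge0$, the measure $\lambda\,d\mu$ is nonnegative, so $\mu$ gives no mass to $[-1,0)$, i.e.\ $\mu$ is supported in $[0,1]$.

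Part (2) I would treat as the mirror image: since divided differences are linear in $f$, the displayed formula also gives
$$
\bigl((-1)^{m-k}f\bigr)^{[m]}(x_1,\dots,x_{m+1})
=\int_{[-1,1]}{(-\lambda)^{m-k}\over(1-\lambda x_1)\cdots(1-\lambda x_{m+1})}\,d\mu(\lambda),
\qquad m\ge k.
$$
If $\mu$ is supported in $[-1,0]$ then $(-\lambda)^{m-k}\,d\mu$ is finite and positive, so Theorem~\ref{T-4.1}(iii)$\Rightarrow$(i) at order $m$ yields $(-1)^{m-k}f\in\cP^{(m)}(-1,1)$, i.e.\ $f\in(-1)^{m-k}\cP^{(m)}(-1,1)$, and $m=k+1$ gives $f\in-\cP^{(k+1)}(-1,1)$. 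Conversely, if $-f\in\cP^{(k+1)}(-1,1)$ I would compare its order $k+1$ representing measure with $-\lambda\,d\mu$ and again use uniqueness to force $-\lambda\,d\mu\ge0$, hence $\mu$ supported in $[-1,0]$.

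I do not expect a serious obstacle; the one point requiring care is the comparison step. The weighted measure $\lambda\,d\mu$ (resp.\ $-\lambda\,d\mu$) is a priori only a signed measure on $[-1,1]$, and its positivity --- equivalently the sign constraint on the support of $\mu$ --- is precisely what gets extracted by matching it against the genuinely positive representing measure of $f$ (resp.\ $-f$) at order $k+1$ via the density of the $h_x$'s. One should also read ``supported in $[0,1]$'' (resp.\ $[-1,0]$) as permitting an atom at $0$, which is consistent with $\lambda\,d\mu\ge0$ (resp.\ $-\lambda\,d\mu\ge0$); note also that for $m-k$ even the conclusion $f\in\cP^{(m)}(-1,1)$ is already contained in Corollary~\ref{C-3.5}(b), so the genuinely new content is the odd case, which the argument above covers uniformly.
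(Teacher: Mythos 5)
Your proof is correct and follows essentially the same route as the paper: both directions rest on comparing the formula $f^{[k+1]}=\int\lambda(1-\lambda x_1)^{-1}\cdots(1-\lambda x_{k+2})^{-1}\,d\mu$ from Theorem \ref{T-4.1}(b) with the genuine order-$(k+1)$ representing measure via the uniqueness/density argument, forcing $\lambda\,d\mu\ge0$, and conversely reading $\lambda^{m-k}\,d\mu$ (resp.\ $(-\lambda)^{m-k}\,d\mu$) as a positive representing measure at order $m$. Your explicit remarks on the signed-measure subtlety and the smoothness hypothesis are sound and only make the paper's terser argument more precise.
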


\begin{proof}
(1)\enspace
Assume that $f\in\cP^{(k+1)}(-1,1)$. Theorem \ref{T-4.1} implies that there exists a
finite positive measure $\mu'$ on $[-1,1]$ such that
$$
f^{[k+1]}(x_1,\dots,x_{k+2})
=\int_{[-1,1]}{1\over(1-\lambda x_1)\cdots(1-\lambda x_{k+2})}\,d\mu'(\lambda)
$$
for every $x_1,\dots,x_{k+2}\in(-1,1)$. On the other hand, by Theorem \ref{T-4.1}\,(b) we
have
$$
f^{[k+1]}(x_1,\dots,x_{k+2})
=\int_{[-1,1]}{\lambda\over(1-\lambda x_1)\cdots(1-\lambda x_{k+2})}\,d\mu(\lambda)
$$
for every $x_1,\dots,x_{k+2}\in(-1,1)$. As in the last part of the proof of
Theorem \ref{T-4.1}, letting $x_1=x$ and $x_2=\dots=x_{n+2}=0$, we must have
$d\mu'(\lambda)=\lambda\,d\mu(\lambda)$ on $[-1,1]$. This means that $\mu$ is supported
in $[0,1]$. Conversely, if $\mu$ is supported in $[0,1]$, then $f\in\cP^{(m)}(-1,1)$ for
all $m\ge k$ thanks to Theorem \ref{T-4.1}.

(2) is similarly shown.
\end{proof}

In case of $m=1$ (or $m=2$) the next proposition shows when $f\in\cP^{(k)}(-1,1)$ is a sum
of a polynomial of degree $k$ and an operator monotone (or operator convex) function on
$(-1,1)$.

\begin{prop}\label{P-4.3}
Let $k,m\in\bN$ with $m<k$. Let $f\in\cP^{(k)}(-1,1)$ with the representing measure $\mu$.
\begin{itemize}
\item[\rm(1)] Assume that $k-m$ is even. Then there exist $a_0,\dots,a_k\in\bR$ with
$a_k\ge0$ and $g\in\cP^{(m)}(-1,1)$ such that
$$
f(x)=\sum_{l=0}^ka_lx^l+g(x),\qquad x\in(-1,1),
$$
if and only if
$$
\int_{[-1,1]\setminus\{0\}}{1\over\lambda^{k-m}}\,d\mu(\lambda)<+\infty.
$$
\item[\rm(2)] Assume that $k-m$ is odd and let $\varepsilon=\pm 1$. Then there exist $a_0,\dots,a_k\in\bR$ with
$a_k\ge0$ and $g\in\cP^{(m)}(-1,1)$ such that
$$
f(x)=\sum_{l=0}^ka_lx^l+\, \varepsilon \,g(x),\qquad x\in(-1,1),
$$
if and only if $\mu$ is supported in $\varepsilon [0,1]$ and
$$
\int_{\varepsilon(0,1]} \frac 1{|\lambda|^{k-m}}\,d\mu(\lambda)<+\infty.
$$
\end{itemize}
\end{prop}

\begin{proof}
(1)\enspace
Assume that $f$ is of the form in (1). Let $\mu'$ be the representing measure of $g$. By
Theorem \ref{T-4.1}\,(a) we then have
\begin{align*}
f^{[k]}(x_1,x_2,\dots,x_{k+1})
&=a_k+\int_{[-1,1]}
{\lambda^{k-m}\over(1-\lambda x_1)(1-\lambda x_2)\cdots(1-\lambda x_{k+1})}
\,d\mu'(\lambda) \\
&=\int_{[-1,1]}
{1\over(1-\lambda x_1)(1-\lambda x_2)\cdots(1-\lambda x_{k+1})}\,d\mu(\lambda).
\end{align*}
As in the proof of Proposition \ref{P-4.2} we have
\begin{equation}\label{eq-4.4}
d\mu(\lambda)=a_k\,d\delta_0(\lambda)+\lambda^{k-m}\,d\mu'(\lambda)
\end{equation}
so that
$$
\int_{[-1,1]\setminus\{0\}}{1\over\lambda^{k-m}}\,d\mu(\lambda)
=\int_{[-1,1]\setminus\{0\}}d\mu'(\lambda)<+\infty.
$$

Conversely, assume that
$\int_{[-1,1]\setminus\{0\}}1/\lambda^{k-m}\,d\mu(\lambda)<+\infty$. Since
\begin{align*}
{x^k\over1-\lambda x}
&={x^m\{1-(1-\lambda x)\}^{k-m}\over(1-\lambda x)\lambda^{k-m}} \\
&=x^m\sum_{l=1}^{k-m}{k-m\choose l}(-1)^l\,{(1-\lambda x)^{l-1}\over
\lambda^{k-m}}+{x^m\over(1-\lambda x)\lambda^{k-m}},\qquad\lambda\ne0,
\end{align*}
we have
\begin{align*}
f(x)&=\sum_{l=0}^{k-1}{f^{(l)}(0)\over l!}x^l
+\int_{[-1,1]}{x^k\over1-\lambda x}\,d\mu(\lambda) \\
&=(\mbox{a polynomial of at most degree $k$})
+\int_{[-1,1]\setminus\{0\}}{x^m\over(1-\lambda x)\lambda^{k-m}}\,d\mu(\lambda)
\end{align*}
where
$$
g(x):=\int_{[-1,1]\setminus\{0\}}{x^m\over(1-\lambda x)\lambda^{k-m}}\,d\mu(\lambda)
$$
belongs to $\cP^{(m)}(-1,1)$. In the above, $g(x)$ is well defined by the integrability
assumption.

(2)\enspace We do it only for $\varepsilon=1$.
Assume that $f$ is of the form in (2). Let $\mu'$ be the representing measure of $g$.
Then we have \eqref{eq-4.4} as in the proof of (1). Since $k-m$ is odd, $\mu$ and $\mu'$
are supported in $[0,1]$ and
$$
\int_{(0,1]}{1\over\lambda^{k-m}}\,d\mu(\lambda)=\int_{(0,1]}d\mu'(\lambda)<+\infty.
$$
The proof of the converse implication is also similar to that of (1) by replacing the
integral region $[-1,1]\setminus\{0\}$ with $(0,1]$.
\end{proof}

We say that a smooth real function $f$ on $(a,b)$ is {\it operator absolutely monotone} if
\begin{equation}\label{eq-4.5}
{d^k\over dt^k}\,f(A+tX)\Big|_{t=0}\ge0,\qquad k=0,1,2,\dots,
\end{equation}
for every $A\in\bM_n^{sa}(a,b)$ and every $X\in\bM_n^+$ for any $n\in\bN$, and
{\it operator completely monotone} if
\begin{equation}\label{eq-4.6}
(-1)^k{d^k\over dt^k}\,f(A+tX)\Big|_{t=0}\ge0,\qquad k=0,1,2,\dots,
\end{equation}
for every $A\in\bM_n^{sa}(a,b)$ and every $X\in\bM_n^+$ for any $n\in\bN$. From
Proposition \ref{P-4.2} one can characterize these operator versions of
absolutely/completely monotone functions on $(-1,1)$ as follows:

\begin{prop}\label{P-4.4}
The following conditions for a real function on $(-1,1)$ are equivalent:
\begin{itemize}
\item[\rm(i)] $f$ is operator absolutely monotone;
\item[\rm(ii)] $f$ is a non-negative operator monotone function whose representing measure
is supported in $[0,1]$;
\item[\rm(iii)] $f$ admits the integral expression
$$
f(x)=\beta+\int_{[0,1]}{1+x\over1-\lambda x}\,d\mu(\lambda)
$$
with $\beta\ge0$ and a {\rm(}unique{\rm)} finite positive measure $\mu$ on $[0,1]$. 
\end{itemize}
\end{prop}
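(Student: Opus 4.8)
The plan is to prove the cycle (i) $\Rightarrow$ (ii) $\Rightarrow$ (iii) $\Rightarrow$ (i), using chiefly Proposition \ref{P-4.2}(1), the integral representation of operator monotone functions on $(-1,1)$ from Theorem \ref{T-1.8} (which is \eqref{eq-4.1} for $k=1$), and the equivalence (i)$\Leftrightarrow$(iii) of Theorem \ref{T-3.3} identifying operator $k$-tonicity with positivity of $\frac{d^k}{dt^k}\,f(A+tX)\big|_{t=0}$ for $A\in\bM_n^{sa}(-1,1)$ and $X\in\bM_n^+$. For (i) $\Rightarrow$ (ii): the case $k=0$ of \eqref{eq-4.5} with $n=1$ gives $f\ge0$ on $(-1,1)$, while $k=1$ and $k=2$ together with Theorem \ref{T-3.3} say that $f\in\cP^{(1)}(-1,1)\cap\cP^{(2)}(-1,1)$; in particular $f$ is operator monotone with a representing measure $\mu$ on $[-1,1]$, and since $f\in\cP^{(2)}(-1,1)$, Proposition \ref{P-4.2}(1) forces $\mu$ to be supported in $[0,1]$. (The full strength ``operator $k$-tone for all $k$'' is not needed here.)

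For (ii) $\Rightarrow$ (iii): I would expand $f$ via Theorem \ref{T-1.8} at $\alpha=0$, so that $f(x)=f(0)+\int_{[0,1]}\frac{x}{1-\lambda x}\,d\mu(\lambda)$, push $\mu$ forward to the finite positive measure $d\nu(\lambda):=(1+\lambda)^{-1}\,d\mu(\lambda)$ on $[0,1]$, and set $\beta:=f(0)-\nu([0,1])$. The elementary identity $\frac{1+x}{1-\lambda x}=1+(1+\lambda)\frac{x}{1-\lambda x}$ then turns the representation into $f(x)=\beta+\int_{[0,1]}\frac{1+x}{1-\lambda x}\,d\nu(\lambda)$. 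The point that requires care is $\beta\ge0$: for $x\in(-1,0)$ the integrand lies in $(0,1]$ and tends to $0$ as $x\searrow-1$ for each $\lambda\in[0,1]$, so dominated convergence gives $f(x)\to\beta$, whence $\beta\ge0$ because $f\ge0$. Uniqueness of $(\beta,\nu)$ I would obtain by running the substitution backwards ($d\mu=(1+\lambda)\,d\nu$), which exhibits $\mu$ as the representing measure of the operator monotone function $f$ with $f(0)=\beta+\nu([0,1])$; Theorem \ref{T-1.8} then fixes $\mu$, hence $\nu$ and $\beta$.

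For (iii) $\Rightarrow$ (i): the crux is that each $h_\lambda(x):=\frac{1+x}{1-\lambda x}=1+(1+\lambda)\frac{x}{1-\lambda x}$, $\lambda\in[0,1]$, is operator absolutely monotone on $(-1,1)$. Indeed $h_\lambda>0$ on $(-1,1)$ so the case $k=0$ holds, and $\frac{x}{1-\lambda x}$ is operator monotone with representing measure $\delta_\lambda$ supported in $[0,1]$, hence lies in $\cP^{(m)}(-1,1)$ for every $m$ by Proposition \ref{P-4.2}(1); Theorem \ref{T-3.3} then gives $\frac{d^m}{dt^m}\,h_\lambda(A+tX)\big|_{t=0}=(1+\lambda)\frac{d^m}{dt^m}\,(\tfrac{x}{1-\lambda x})(A+tX)\big|_{t=0}\ge0$ for all $m\ge1$, $A\in\bM_n^{sa}(-1,1)$, $X\in\bM_n^+$. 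The function $f$ given by (iii) is analytic (a uniformly convergent power series on $(-1,1)$), hence smooth as the definition requires; writing $f=\beta I+\int_{[0,1]}h_\lambda\,d\mu$, testing against an arbitrary state $\omega$, and differentiating under the integral sign — legitimate since the spectrum of $A+tX$ stays in a fixed compact subset of $(-1,1)$ for $|t|$ small, uniformly in $\lambda\in[0,1]$ — yields $\frac{d^m}{dt^m}\omega(f(A+tX))\big|_{t=0}=\int_{[0,1]}\omega\big(\frac{d^m}{dt^m}h_\lambda(A+tX)\big|_{t=0}\big)\,d\mu(\lambda)\ge0$ for all $m\ge0$ and all states $\omega$, i.e. $f$ is operator absolutely monotone.

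I expect the structural part to be routine and the only genuinely delicate steps to be the two ``soft analysis'' points: extracting $\beta\ge0$ from the endpoint limit $\lim_{x\searrow-1}f(x)=\beta$ in (ii) $\Rightarrow$ (iii), and justifying the exchange of $\frac{d^m}{dt^m}\big|_{t=0}$ with $\int d\mu$ in (iii) $\Rightarrow$ (i). Both become straightforward once phrased via states and dominated convergence; everything else is bookkeeping around Proposition \ref{P-4.2} and the already established representation theorems.
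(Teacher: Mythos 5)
Your proposal is correct and rests on the same ingredients as the paper's proof: Proposition \ref{P-4.2}\,(1) for the equivalence with (ii), the representation of Theorem \ref{T-1.8} together with the identity $\tfrac{1+x}{1-\lambda x}=1+(1+\lambda)\tfrac{x}{1-\lambda x}$ for passing to (iii), and the positivity/monotonicity of the kernels $\tfrac{1+x}{1-\lambda x}$ for the converse. The only cosmetic differences are that you obtain $\beta\ge0$ by letting $x\searrow-1$ rather than $\alpha\searrow-1$ in \eqref{eq-1.5}, and you close the cycle with (iii)\,$\Rightarrow$\,(i) by differentiating under the integral instead of the paper's one-line (iii)\,$\Rightarrow$\,(ii); both variants are sound.
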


\begin{proof}
(i) $\Leftrightarrow$ (ii) is obvious from Proposition \ref{P-4.2}\,(1) for $k=1$. 
(ii)\,$\Rightarrow$\,(iii) follows by letting $\alpha\searrow-1$ in \eqref{eq-1.5} of Theorem
\ref{T-1.8} and then replacing $(1+\lambda)^{-1}\,d\mu(\lambda)$ with $d\mu(\lambda)$.
(iii)\,$\Rightarrow$\,(ii) is immediate since $(1+x)/(1-\lambda x)$ is non-negative and
operator monotone on $(-1,1)$ for $\lambda\in[0,1]$.
\end{proof}

\begin{prop}
The following conditions for a real function on $(-1,1)$ are equivalent:
\begin{itemize}
\item[\rm(i)] $f$ is operator completely monotone;
\item[\rm(ii)] $-f$ is a non-positive operator monotone function whose representing measure
is supported in $[-1,0]$;
\item[\rm(iii)] $f$ admits the integral expression
$$
f(x)=\beta+\int_{[-1,0]}{1-x\over1-\lambda x}\,d\mu(\lambda)
$$
with $\beta\ge0$ and a {\rm(}unique{\rm)} finite positive measure $\mu$ on $[-1,0]$.
\end{itemize}
\end{prop}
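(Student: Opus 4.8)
The plan is to mirror the proof of Proposition \ref{P-4.4}, interchanging the rôles of the endpoints $-1$ and $+1$ and replacing absolute monotonicity by complete monotonicity. The starting observation is the same translation: by Propositions \ref{P-1.6} and \ref{P-2.6}, for $k\ge1$ the inequality $(-1)^k\frac{d^k}{dt^k}f(A+tX)\big|_{t=0}\ge0$ for all $A\in\bM_n^{sa}(-1,1)$ and $X\in\bM_n^+$ of every $n$ is equivalent to $(-1)^kf\in\cP^{(k)}(-1,1)$, while for $k=0$ it merely says $f\ge0$ on $(-1,1)$. Hence \eqref{eq-4.6} holds for all $k$ if and only if $f\ge0$ and $(-1)^kf\in\cP^{(k)}(-1,1)$ for every $k\ge1$. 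Throughout I write $g:=-f$.

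First I would establish (i)$\Leftrightarrow$(ii) by invoking Proposition \ref{P-4.2}\,(2) with $k=1$. If $f$ is operator completely monotone, then the cases $k=1$ and $k=2$ above give $g\in\cP^{(1)}(-1,1)$ (so $g$ is operator monotone, with a representing measure $\nu$) and $g\in-\cP^{(2)}(-1,1)$; Proposition \ref{P-4.2}\,(2) then forces $\nu$ to be supported in $[-1,0]$, and $f\ge0$ means $g\le0$, which is (ii). Conversely, if $g=-f$ is a non-positive operator monotone function whose representing measure lies in $[-1,0]$, then Proposition \ref{P-4.2}\,(2) yields $g\in(-1)^{m-1}\cP^{(m)}(-1,1)$ for all $m>1$; together with $g\in\cP^{(1)}(-1,1)$ this says $(-1)^mf\in\cP^{(m)}(-1,1)$ for every $m\ge1$, and with $f=-g\ge0$ this is precisely \eqref{eq-4.6} for all $k$, i.e.\ (i).

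Next I would treat (ii)$\Leftrightarrow$(iii). For (ii)$\Rightarrow$(iii), apply Theorem \ref{T-1.8} to the operator monotone function $g$, whose representing measure $\mu_0$ is supported in $[-1,0]$, and let $\alpha\nearrow1$ in \eqref{eq-1.5}. Since $g$ is non-decreasing and $g\le0$, the limit $g(1^-):=\lim_{\alpha\nearrow1}g(\alpha)\le0$ exists and is finite; and for $\lambda\in[-1,0]$ the integrand $\frac{x-\alpha}{(1-\lambda x)(1-\lambda\alpha)}$ converges pointwise to $\frac{x-1}{(1-\lambda x)(1-\lambda)}$ while staying uniformly bounded in $\alpha$ near $1$ (because there $1-\lambda\alpha\in[1,2]$), so dominated convergence gives $g(x)=g(1^-)+\int_{[-1,0]}\frac{x-1}{(1-\lambda x)(1-\lambda)}\,d\mu_0(\lambda)$. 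Setting $\beta:=-g(1^-)\ge0$ and $d\mu(\lambda):=(1-\lambda)^{-1}\,d\mu_0(\lambda)$, a finite positive measure on $[-1,0]$, produces the expression in (iii). For (iii)$\Rightarrow$(ii), note that $\frac{1-x}{1-\lambda x}>0$ on $(-1,1)$ for $\lambda\in[-1,0]$ (so $f\ge\beta\ge0$, whence $g=-f\le0$), and that each such function is operator convex on $(-1,1)$: for $\lambda<0$, with $t:=x-1/\lambda>0$ on $(-1,1)$, one has $\frac{1-x}{1-\lambda x}=\frac{1-1/\lambda}{|\lambda|}\cdot\frac1t-\frac1{|\lambda|}$, a positive multiple of the operator convex function $t\mapsto1/t$ plus a constant (and it is affine when $\lambda=0$). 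Hence $f\in\cP^{(2)}(-1,1)$, i.e.\ $g\in-\cP^{(2)}(-1,1)$, while $g$ is clearly operator monotone; Proposition \ref{P-4.2}\,(2) then shows the representing measure of $g$ is supported in $[-1,0]$, giving (ii). Uniqueness of $\mu$ follows as in Theorem \ref{T-1.8}: letting $x\nearrow1$ gives $\beta=\lim_{x\nearrow1}f(x)$, and then $\int_{[-1,0]}(1-\lambda x)^{-1}\,d\mu(\lambda)=(f(x)-\beta)/(1-x)$ is determined by $f$, whence $\mu$ is determined by the density of $\operatorname{span}\{(1-\lambda x)^{-1}:x\in(-1,1)\}$ in $C([-1,0])$.

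The only genuinely analytic step is the endpoint passage $\alpha\nearrow1$ in \eqref{eq-1.5}; once one records that confining $\lambda$ to $[-1,0]$ keeps $1-\lambda\alpha$ bounded away from $0$ and $\infty$, both the finiteness of $g(1^-)$ and the limit under the integral are routine. Everything else is a transcription of the absolutely monotone case through Proposition \ref{P-4.2}\,(2).
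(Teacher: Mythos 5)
Your argument is correct and is precisely the first route the paper indicates: its proof of this proposition simply says ``similar to Proposition \ref{P-4.4}'' (or, alternatively, reduce to Proposition \ref{P-4.4} via $x\mapsto -x$, since $f$ is operator completely monotone iff $f(-x)$ is operator absolutely monotone), and you have carried out that mirrored transcription in full, with the endpoint passage $\alpha\nearrow1$ and the appeal to Proposition \ref{P-4.2}\,(2) handled correctly. No gaps.
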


\begin{proof}
The proof is similar to that of Proposition \ref{P-4.4}. Alternatively, this proposition
immediately follows from Proposition \ref{P-4.4} since $f$ is operator completely monotone
on $(-1,1)$ if and only if $f(-x)$ is operator absolutely monotone on $(-1,1)$.
\end{proof}

Furthermore, one can see from Theorem \ref{T-3.3} that if $f$ is operator absolutely
monotone (resp., operator completely monotone) on $(-1,1)$, then \eqref{eq-4.5} (resp.,
\eqref{eq-4.6}) holds for every $A\in B(\cH)^{sa}(-1,1)$ and $X\in B(\cH)^+$. This
justifies our terminology.

\section{Operator $k$-tone functions on $(0,\infty)$}\label{sec-5}

In addition to general characterizations in Theorem \ref{T-3.3}, the next theorem
gives integral representation of operator $k$-tone functions on the unbounded interval
$(0,\infty)$. Note that \eqref{eq-1.8} is \eqref{eq-5.2} when $k=1$.

\begin{thm}\label{T-5.1}
Let $f$ be a real function on $(0,\infty)$, and let $k\in\bN$. Then the following
conditions {\rm(i)}--{\rm(iii)} are equivalent:
\begin{itemize}
\item[\rm(i)] $f$ is operator $k$-tone on $(0,\infty)$;
\item[\rm(ii)] $f$ is $C^{k-1}$ on $(0,\infty)$ and there exist a $\gamma\ge0$ and a
positive measure $\mu$ on $[0,\infty)$ such that
\begin{equation}\label{eq-5.1}
\int_{[0,\infty)}{1\over(1+\lambda)^{k+1}}\,d\mu(\lambda)<+\infty
\end{equation}
and for any choice of $\alpha\in(0,\infty)$,
\begin{equation}\label{eq-5.2}
f(x)=\sum_{l=0}^{k-1}{f^{(l)}(\alpha)\over l!}\,(x-\alpha)^l+\gamma(x-\alpha)^k
\int_{[0,\infty)}{(x-\alpha)^k\over(x+\lambda)(\alpha+\lambda)^k}\,d\mu(\lambda),
\quad x\in(0,\infty);
\end{equation}
\item[\rm(iii)] $f$ is $C^k$ on $(0,\infty)$ and there exist a $\gamma\ge0$ and a positive
measure $\mu$ on $[0,\infty)$ such that \eqref{eq-5.1} holds and
\begin{equation}\label{eq-5.3}
f^{[k]}(x_1,x_2,\dots,x_{k+1})=\gamma+\int_{[0,\infty)}
{1\over(x_1+\lambda)(x_2+\lambda)\cdots(x_{k+1}+\lambda)}\,d\mu(\lambda)
\end{equation}
for every $x_1,x_2,\dots,x_{k+1}\in(0,\infty)$.
\end{itemize}

Moreover, in the above situation, $\gamma$ and $\mu$ in {\rm(ii)} and in {\rm(iii)} are
unique and same, and the following hold for every $m>k$:
\begin{itemize}
\item[\rm(a)] For any choice of $\alpha\in(0,\infty)$,
\begin{equation}\label{eq-5.4}
f(x)=\sum_{l=0}^{m-1}{f^{(l)}(\alpha)\over l!}(x-\alpha)^l
+(-1)^{m-k}\int_{[0,\infty)}{(x-\alpha)^m\over(x+\lambda)(\alpha+\lambda)^m}
\,d\mu(\lambda),\quad x\in(0,\infty),
\end{equation}
and hence $(-1)^{m-k}f$ is operator $m$-tone on $(0,\infty)$.
\item[\rm(b)] For every $x_1,x_2,\dots,x_{m+1}\in(0,\infty)$,
$$
f^{[m]}(x_1,x_2,\dots,x_{m+1})=(-1)^{m-k}\int_{[0,\infty)}
{1\over(x_1+\lambda)(x_2+\lambda)\cdots(x_{m+1}+\lambda)}\,d\mu(\lambda).
$$
\end{itemize}
\end{thm}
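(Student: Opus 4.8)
\emph{Approach.} The plan is to follow the proof of Theorem~\ref{T-4.1} essentially line by line, replacing the kernel $1/(1-\lambda x)$ of the $(-1,1)$ theory by $1/(x+\lambda)$ and using the integral representation of operator monotone functions on $(0,\infty)$ from Theorem~\ref{T-1.10} in place of Theorem~\ref{T-1.8}. In particular a version of the divided-difference computation in Corollary~\ref{C-1.9} will be needed for the kernel $(x-\alpha)^k/\bigl((x+\lambda)(\alpha+\lambda)^k\bigr)$, and one must carry along the extra affine-at-infinity coefficient $\gamma$ that Theorem~\ref{T-1.10} produces.

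\emph{The implication (i)$\Rightarrow$(ii).} First, $f$ is $C^\infty$ by Lemma~\ref{L-3.1}, so the smoothness asserted in (ii) is automatic. Fix $\alpha\in(0,\infty)$; by Corollary~\ref{C-3.4}\,(ii) there is an operator monotone $g$ on $(0,\infty)$ with $f(x)=\sum_{l=0}^{k-2}\frac{f^{(l)}(\alpha)}{l!}(x-\alpha)^l+(x-\alpha)^{k-1}g(x)$. Represent $g$ by Theorem~\ref{T-1.10} with data $\gamma\ge0$ and $\mu_g$, multiply through by $(x-\alpha)^{k-1}$, and set $d\mu(\lambda):=(\alpha+\lambda)^{k-1}\,d\mu_g(\lambda)$. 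Since $(\alpha+\lambda)^{k-1}\le C_\alpha(1+\lambda)^{k-1}$ on $[0,\infty)$, the bound $\int(1+\lambda)^{-2}\,d\mu_g<\infty$ upgrades to \eqref{eq-5.1}, and one obtains $f(x)=Q(x)+\gamma(x-\alpha)^k+\int\frac{(x-\alpha)^k}{(x+\lambda)(\alpha+\lambda)^k}\,d\mu(\lambda)$ with $\deg Q\le k-1$; comparing Taylor coefficients at $\alpha$ (the other two terms being $O((x-\alpha)^k)$) forces $Q$ to be the order-$(k-1)$ Taylor polynomial of $f$ at $\alpha$, i.e.\ \eqref{eq-5.2} at $\alpha$. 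To remove the $\alpha$-dependence, note that for $\beta\in(0,\infty)$
$$
\frac{(x-\alpha)^k}{(x+\lambda)(\alpha+\lambda)^k}-\frac{(x-\beta)^k}{(x+\lambda)(\beta+\lambda)^k}
=\frac{(x-\alpha)^k(\beta+\lambda)^k-(x-\beta)^k(\alpha+\lambda)^k}{(x+\lambda)(\alpha+\lambda)^k(\beta+\lambda)^k},
$$
whose numerator vanishes at $x=-\lambda$, so the whole expression is a polynomial in $x$ of degree $\le k-1$ with coefficients $c_l(\lambda)=O\bigl((1+\lambda)^{-(k+1)}\bigr)$, hence $\mu_\alpha$-integrable. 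Integrating and arguing as in Theorem~\ref{T-4.1}, the representation \eqref{eq-5.2} at $\beta$ holds with the same $\gamma,\mu$, once uniqueness is in hand.

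\emph{Uniqueness, (ii)$\Rightarrow$(iii), (a), (b), and (iii)$\Rightarrow$(i).} Uniqueness of $(\gamma,\mu)$ follows from that of $(\gamma,\mu_g)$ in Theorem~\ref{T-1.10} via the correspondence of Corollary~\ref{C-3.4}, since $g(x)=f^{[k-1]}(x,\alpha,\dots,\alpha)$ is determined by $f$ while $d\mu=(\alpha+\lambda)^{k-1}\,d\mu_g$ and $\gamma=\gamma_g$. For (ii)$\Rightarrow$(iii) and (b), expand $\frac{(x-\alpha)^k}{(x+\lambda)(\alpha+\lambda)^k}=(-1)^k\frac{1}{x+\lambda}+P_{k-1}(x)$ with $\deg P_{k-1}\le k-1$; for $m\ge k$ the $m$th divided difference annihilates both $P_{k-1}$ and the Taylor polynomial, while $\bigl(\tfrac{1}{x+\lambda}\bigr)^{[m]}(x_1,\dots,x_{m+1})=(-1)^m\prod_i(x_i+\lambda)^{-1}$ and $\gamma(x-\alpha)^k$ has $m$th divided difference $\gamma$ if $m=k$ and $0$ if $m>k$; the interchange with $\int d\mu$ is justified on compacta of $(0,\infty)^{m+1}$ by \eqref{eq-5.1} exactly as in Corollary~\ref{C-1.9}. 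This gives \eqref{eq-5.3} and (b), and (a) follows from (b) with $x_1=x$, $x_2=\dots=x_{m+1}=\alpha$. For (iii)$\Rightarrow$(i): by \eqref{eq-5.3}, $f^{[k]}(x_i,x_j,\alpha,\dots,\alpha)=\gamma+\int\frac{d\mu(\lambda)}{(x_i+\lambda)(x_j+\lambda)(\alpha+\lambda)^{k-1}}$, so $\bigl[f^{[k]}(x_i,x_j,\alpha,\dots,\alpha)\bigr]_{i,j}\ge0$ as a positive combination of rank-one positive matrices; hence $g(x)=f^{[k-1]}(x,\alpha,\dots,\alpha)$ is operator monotone by L\"owner's theorem, and Theorem~\ref{T-3.3}\,(v) yields (i) (for $k=1$ this already says $f$ is operator monotone).

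\emph{Main difficulty.} The delicate step is (i)$\Rightarrow$(ii): over the unbounded interval one must track the integrability exponent through the substitution $d\mu=(\alpha+\lambda)^{k-1}\,d\mu_g$ so as to land on \eqref{eq-5.1}, verify the decay $c_l(\lambda)=O\bigl((1+\lambda)^{-(k+1)}\bigr)$ of the coefficients entering the $\alpha$-independence argument, and conclude that the scalar $\gamma$ is independent of $\alpha$ — the last point genuinely relying on the uniqueness statement rather than on a direct identification.
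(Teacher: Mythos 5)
Your proposal is correct and follows essentially the same route as the paper: reduce to the integral representation of the operator monotone function $g(x)=f^{[k-1]}(x,\alpha,\dots,\alpha)$ via Theorem \ref{T-1.10} and Corollary \ref{C-3.4}, establish $\alpha$-independence by rewriting the kernel as a degree-$(k-1)$ polynomial correction with integrable coefficients, and derive (iii), (a), (b) from the divided differences of $(x-\alpha)^k/\bigl((x+\lambda)(\alpha+\lambda)^k\bigr)$. Your only departures are minor and both sound: for (iii)$\Rightarrow$(i) the paper checks positivity of the $k$th derivative directly via the Daleckii--Krein formula rather than passing through the L\"owner matrix of $g$ and Theorem \ref{T-3.3}\,(v), and for uniqueness it specializes \eqref{eq-5.3} at $x_2=\dots=x_{k+1}=1$ to a non-negative operator monotone decreasing function instead of pulling back the uniqueness of Theorem \ref{T-1.10} through $g$ (you should also add the one-line remark that \eqref{eq-5.4} is an instance of \eqref{eq-5.2} for $(-1)^{m-k}f$ with $m$ in place of $k$, which yields the asserted operator $m$-tonicity).
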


\begin{proof}
(i)\,$\Rightarrow$\,(ii).\enspace
Assume (i); by Theorem \ref{T-3.3}, $f$ is analytic and
$g(x):=f^{[k-1]}(x,\alpha,\dots,\alpha)$ is operator monotone on $(0,\infty)$ for any
fixed $\alpha\in(0,\infty)$. Apply Theorem \ref{T-1.10} to $g$ and replace
$(\alpha+\lambda)^{k-1}\,d\mu(\lambda)$ with $d\mu(\lambda)$. Then, thanks to
\eqref{eq-3.3} with $(a,b)=(0,\infty)$, there exist a $\beta\in\bR$, a $\gamma\ge0$ and
a positive measure $\mu$ on $[0,\infty)$ satisfying \eqref{eq-5.1} such that
\begin{align*}
f(x)&=\sum_{l=0}^{k-2}{f^{(l)}(\alpha)\over l!}\,(x-\alpha)^l
+\beta(x-\alpha)^{k-1}+\gamma(x-\alpha)^k \\
&\qquad\qquad\qquad
+\int_{[0,\infty)}{(x-\alpha)^k\over(x+\lambda)(\alpha+\lambda)^k}\,d\mu(\lambda),
\qquad x\in(0,\infty).
\end{align*}
Since the above integral term is $o\bigl((x-\alpha)^{k-1}\bigr)$, we have
$\beta=f^{(k-1)}(\alpha)/(k-1)!$ so that expression \eqref{eq-5.2} holds for $\alpha$ fixed
above. Let $\tilde\alpha\in(0,\infty)$ be arbitrary. Since
$$
{(x-\alpha)^k\over(x+\lambda)(\alpha+\lambda)^k}
-{(x-\tilde\alpha)^k\over(x+\lambda)(\tilde\alpha+\lambda)^k}
$$
is a polynomial in $x$ (with coefficients depending on $\lambda$) of at most
degree $k-1$, one can show as in the proof of (i)\,$\Rightarrow$\,(ii) of
Theorem \ref{T-4.1} that the measure $\mu$ does not depend on $\alpha$.

(ii)\,$\Rightarrow$\,(iii).\enspace
For every $\lambda\in[0,\infty)$, similarly to \eqref{eq-1.a} we have
$$
{(x-\alpha)^k\over(x+\lambda)(\alpha+\lambda)^k}
=(\mbox{a polynomial of degree $k-1$})+{(-1)^k\over x+\lambda}
$$
so that
\begin{align*}
&\biggl({(x-\alpha)^k\over(x+\lambda)(\alpha+\lambda)^k}\biggr)^{[k]}
(x_1,x_2,\dots,x_{k+1}) 
={1\over(x_1+\lambda)(x_2+\lambda)\cdots(x_{k+1}+\lambda)}
\end{align*}
for every $x_1,x_2,\dots,x_{k+1}\in(0,\infty)$. Hence (iii) follows from (ii) by taking the
$k$th divided differences of both sides of \eqref{eq-5.2}.

(iii)\,$\Rightarrow$\,(i).\enspace
Let $A=\diag(a_1,\dots,a_n)\in\bM_n^{sa}(0,\infty)$ and $X\in\bM_n^+$. Similarly to the
proof of Lemma \ref{L-3.2}, based on Daleckii and Krein's derivative formula, one can
show from (iii) that
$$
{d^k\over dt^k}\,f(A+tX)\Big|_{t=0}
=k!\int_{[0,\infty)}D(\lambda)^{1/2}(D(\lambda)^{1/2}XD(\lambda)^{1/2})^k
D(\lambda)^{1/2}\,d\mu(\lambda)\ge0,
$$
with
$$
D(\lambda):=\diag\biggl({1\over a_1+\lambda},\dots,{1\over a_n+\lambda}\biggr),
\qquad\lambda\in[0,\infty).
$$
This yields (i) by Theorem \ref{T-3.3}.

Next, we prove the uniqueness of $\gamma$ and $\mu$ in (ii) or in (iii). It suffices to
show the uniqueness of $\gamma$ and $\mu$ in (iii). Let $x_1=x$ and $x_2=\dots=x_{k+1}=1$
in \eqref{eq-5.3}. Then
\begin{equation}\label{eq-5.5}
g(x):=f^{[k]}(x,1,\dots,1)
=\gamma+\int_{[0,\infty)}{1\over x+\lambda}\,d\nu(\lambda),
\end{equation}
where
$$
d\nu(\lambda):={1\over(1+\lambda)^k}\,d\mu(\lambda),\quad
\mbox{hence}\ \ \int_{[0,\infty)}{1\over1+\lambda}\,d\nu(\lambda)<+\infty.
$$
This says that $g$ is a non-negative operator monotone decreasing function on $(0,\infty)$.
It is well known that a $\gamma\ge0$ and a measure $\nu$ on $[0,\infty)$ representing $g$
in \eqref{eq-5.5} are unique (in fact, $\gamma=\lim_{x\to\infty}g(x)$).
So $\gamma$ and $\mu$ in (iii) are unique.

Finally, we prove (a) and (b). Assertion (a) immediately follows by computing higher order
divided differences from \eqref{eq-5.3}. Moreover, \eqref{eq-5.4} follows
from (b) by letting $x_1=\alpha$ and $x_2=\dots=x_{m+1}=\alpha$.
The operator $m$-tonicity of $(-1)^{m-k}f$ now follows from expression \eqref{eq-5.4}
due to condition (ii).
\end{proof}

One can understand the integral term of \eqref{eq-5.4} as the $m$th remainder term of the
Taylor series of $f$ at $\alpha$, which converges to $0$ as $m\to\infty$ for
$x\in(0,2\alpha)$.

\begin{remark}\rm
By using condition (iii) of Theorem \ref{T-5.1} in the cases $k=1,2$ and
\cite[Theorem 3.1]{AH} (also \cite{Ha}) we see that a real function $f$ on $(0,\infty)$ is
operator monotone (resp., operator convex) if and only if $f^{[1]}(x,\alpha)$ (resp.,
$f^{[2]}(x,\alpha,\alpha)$ with an additional condition of $f$ being $C^1$) is
non-negative, non-increasing (as a numerical function) and operator convex on $(0,\infty)$
for some $\alpha\in(0,\infty)$ (with continuation at $x=\alpha$). It is also known
\cite[Corollary 2.7.8]{Hi}, \cite[Lemma 2.1]{Uc} that a real function $f$ on $(a,b)$
is operator convex if and only if $f^{[1]}(x,\alpha)$ is operator monotone on $(a,b)$ for
some $\alpha\in(a,b)$ (with continuation at $x=\alpha$).
\end{remark}

Assertion (a) of Theorem \ref{T-5.1} gives the following inclusion property of
$\cP^{(k)}(0,\infty)$'s, where the strict inclusions are seen as in
the proof of Proposition \ref{P-3.9} (see also Proposition \ref{P-5.6} and examples in
Section \ref{sec-6}). The first inclusion $\cP^{(1)}(0,\infty)\subset-\cP^{(2)}(0,\infty)$
is essentially the same as the well-known fact \cite[V.2.5]{Bh} that operator monotone
functions on $[0,\infty)$ are operator concave there.

\begin{prop}\label{P-5.2}
The closed convex cones $\cP^{(k)}(0,\infty)$, $k\in\bN$, of $\cF(0,\infty)$ satisfy
$$
\cP^{(1)}(0,\infty)\subsetneqq-\cP^{(2)}(0,\infty)\subsetneqq\cP^{(3)}(0,\infty)
\subsetneqq-\cP^{(4)}(0,\infty)\subsetneqq\cdots,
$$
where $-\cP^{(k)}(0,\infty):=\{-f:f\in\cP^{(k)}(0,\infty)\}$.
\end{prop}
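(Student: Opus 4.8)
\emph{Overview.} Everything will follow from Theorem~\ref{T-5.1}(a) together with one explicit family of examples, so the plan is short. First I would record that each $\cP^{(k)}(0,\infty)$ is a closed convex cone in $\cF(0,\infty)$, which is the case $(a,b)=(0,\infty)$ of Proposition~\ref{P-3.9}. The remaining task is the chain of strict inclusions, and throughout I would reduce to a sign-free form.

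\emph{The inclusions.} I would note that the $k$-th inclusion in the displayed chain is exactly $(-1)^{k-1}\cP^{(k)}(0,\infty)\subseteq(-1)^k\cP^{(k+1)}(0,\infty)$, and that applying the linear bijection $f\mapsto(-1)^{k-1}f$ of $\cF(0,\infty)$ turns this into the equivalent statement $\cP^{(k)}(0,\infty)\subseteq-\cP^{(k+1)}(0,\infty)$. To prove the latter, take $f\in\cP^{(k)}(0,\infty)$; then $f$ satisfies condition (i) of Theorem~\ref{T-5.1}, and part (a) of that theorem with $m=k+1$ says precisely that $(-1)^{(k+1)-k}f=-f$ is operator $(k+1)$-tone on $(0,\infty)$, i.e.\ $f\in-\cP^{(k+1)}(0,\infty)$. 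Reading this off for $k=1,2,3,\dots$ gives $\cP^{(1)}(0,\infty)\subseteq-\cP^{(2)}(0,\infty)\subseteq\cP^{(3)}(0,\infty)\subseteq-\cP^{(4)}(0,\infty)\subseteq\cdots$.

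\emph{Strictness.} For each $k$ I would take the single witness $h_k(x):=-x^{k+1}$ on $(0,\infty)$. Since $x^{k+1}$ is a polynomial of degree $k+1$ with non-negative leading coefficient, it is operator $(k+1)$-tone on $\bR$ by the remark following Proposition~\ref{P-1.6}, hence on $(0,\infty)$, so $h_k\in-\cP^{(k+1)}(0,\infty)$. On the other hand $h_k\notin\cP^{(k)}(0,\infty)$: an operator $k$-tone function is in particular matrix $k$-tone of order $1$, i.e.\ $k$-tone in the numerical sense, and such a function, being $C^k$, satisfies $f^{(k)}\ge0$ throughout the interval; but $h_k$ is a polynomial with $h_k^{(k)}(x)=-(k+1)!\,x<0$ for $x>0$. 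Hence $h_k\in-\cP^{(k+1)}(0,\infty)\setminus\cP^{(k)}(0,\infty)$, and applying $f\mapsto(-1)^{k-1}f$ once more places $(-1)^{k-1}h_k$ in $(-1)^k\cP^{(k+1)}(0,\infty)\setminus(-1)^{k-1}\cP^{(k)}(0,\infty)$, so the $k$-th inclusion is strict.

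\emph{Main obstacle.} Granting Theorem~\ref{T-5.1}, there is no genuine obstacle. The only point needing care is the parity bookkeeping: matching the sign $(-1)^{m-k}$ appearing in Theorem~\ref{T-5.1}(a) with the alternating signs of the displayed chain, and checking that the uniform witness $-x^{k+1}$ lands on the correct side of the correct inclusion after the sign twist. Systematically rewriting each inclusion in the sign-free form $\cP^{(k)}(0,\infty)\subseteq-\cP^{(k+1)}(0,\infty)$ makes this transparent.
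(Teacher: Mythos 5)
Your proposal is correct and takes essentially the same route as the paper: the cone and closedness properties come from Proposition~\ref{P-3.9}, the inclusions $\cP^{(k)}(0,\infty)\subseteq-\cP^{(k+1)}(0,\infty)$ come from Theorem~\ref{T-5.1}(a) with $m=k+1$, and strictness is witnessed by an explicit polynomial exactly as the paper does by pointing back to the $x^k$ example in the proof of Proposition~\ref{P-3.9} (your $-x^{k+1}$ with negative $k$th derivative is the natural adaptation to $(0,\infty)$). The only nit is a citation slip: the fact that a degree-$(k+1)$ polynomial with non-negative leading coefficient is operator $(k+1)$-tone on $\bR$ is the remark following Definition~\ref{D-1.4} (via Example~\ref{E-1.2}), not a remark after Proposition~\ref{P-1.6}.
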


Theorem \ref{T-5.1} says that a function $f\in\cP^{(k)}(0,\infty)$ admits a unique integral
expression given in \eqref{eq-5.2} with a constant $\gamma\ge0$ and a measure $\mu$ on
$[0,\infty)$ satisfying \eqref{eq-5.1}. We call $\gamma$ the {\it coefficient of the $k$th
degree} of $f$ and $\mu$ the {\it representing measure} of $f$.

In case of $m=1$ (or $m=2$) the next proposition characterizes when
$f\in\cP^{(k)}(0,\infty)$ is a sum or difference of a polynomial of degree $k$ and an
operator monotone (or operator convex) function on $(0,\infty)$.

\begin{prop}\label{P-5.6}
Let $k,m\in\bN$ with $m<k$. For a real function $f$ on $(0,\infty)$ the following are
equivalent:
\begin{itemize}
\item[\rm(i)] there exist $a_0,\dots,a_k\in\bR$ with $a_k\ge0$ and
$g\in\cP^{(m)}(0,\infty)$ such that
$$
f(x)=\sum_{l=0}^ka_lx^l+(-1)^{k-m}g(x),\qquad x\in(0,\infty);
$$
\item[\rm(ii)] $f\in\cP^{(k)}(0,\infty)$ and the representing measure $\mu$ of $f$
satisfies
\begin{equation}\label{eq-5.6}
\int_{[0,\infty)}{1\over(1+\lambda)^{m+1}}\,d\mu(\lambda)<+\infty.
\end{equation}
\end{itemize}
\end{prop}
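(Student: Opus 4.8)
The plan is to obtain both implications as direct consequences of Theorem \ref{T-5.1}, in exactly the way Proposition \ref{P-4.3} was deduced on $(-1,1)$; the only tools needed are parts (a), (b) and condition (iii) of Theorem \ref{T-5.1}, the uniqueness of the pair $(\gamma,\mu)$, and the analyticity of operator $k$-tone functions from Theorem \ref{T-3.3}. Everything else is bookkeeping with $k$th divided differences, polynomial remainders and the sign $(-1)^{k-m}$.

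For (i) $\Rightarrow$ (ii) I would start from $f(x)=\sum_{l=0}^k a_lx^l+(-1)^{k-m}g(x)$ with $a_k\ge0$ and $g\in\cP^{(m)}(0,\infty)$; write $\mu_g$ for the representing measure of $g$. Since $g$ is analytic by Theorem \ref{T-3.3}, $f$ is $C^\infty$, in particular $C^k$. Taking $k$th divided differences and using that the $k$th divided difference of a polynomial of degree $\le k$ is its coefficient of $x^k$, we get $f^{[k]}(x_1,\dots,x_{k+1})=a_k+(-1)^{k-m}g^{[k]}(x_1,\dots,x_{k+1})$; and Theorem \ref{T-5.1}(b), applied to $g$ at order $k\ge m$, gives $g^{[k]}(x_1,\dots,x_{k+1})=(-1)^{k-m}\int_{[0,\infty)}\prod_{i=1}^{k+1}(x_i+\lambda)^{-1}\,d\mu_g(\lambda)$. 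Hence
$$f^{[k]}(x_1,\dots,x_{k+1})=a_k+\int_{[0,\infty)}\frac{1}{(x_1+\lambda)\cdots(x_{k+1}+\lambda)}\,d\mu_g(\lambda),$$
which is precisely condition (iii) of Theorem \ref{T-5.1} with $\gamma=a_k\ge0$ and $\mu=\mu_g$ (note $\int_{[0,\infty)}(1+\lambda)^{-(k+1)}\,d\mu_g\le\int_{[0,\infty)}(1+\lambda)^{-(m+1)}\,d\mu_g<\infty$ since $\mu_g$ is a $\cP^{(m)}$-representing measure). Therefore $f\in\cP^{(k)}(0,\infty)$ with representing measure $\mu=\mu_g$, and \eqref{eq-5.6} holds because it is just condition \eqref{eq-5.1} for $\mu_g$ at order $m$.

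For the converse, given $f\in\cP^{(k)}(0,\infty)$ with $k$th-degree coefficient $\gamma\ge0$ and representing measure $\mu$ satisfying \eqref{eq-5.6}, I would produce $g$ explicitly by
$$g(x):=\int_{[0,\infty)}\frac{(x-1)^m}{(x+\lambda)(1+\lambda)^m}\,d\mu(\lambda),\qquad x\in(0,\infty).$$
For each fixed $x$ the integrand and its $x$-derivatives are $O\bigl((1+\lambda)^{-(m+1)}\bigr)$, so \eqref{eq-5.6} makes $g$ well defined and lets one differentiate and form divided differences under the integral. Computing the $m$th divided difference exactly as in the proof of (ii) $\Rightarrow$ (iii) of Theorem \ref{T-5.1} (the key identity being $\bigl((x-1)^m/((x+\lambda)(1+\lambda)^m)\bigr)^{[m]}=(-1)^m(1/(x+\lambda))^{[m]}=\prod_{i=1}^{m+1}(x_i+\lambda)^{-1}$) yields $g^{[m]}(x_1,\dots,x_{m+1})=\int_{[0,\infty)}\prod_{i=1}^{m+1}(x_i+\lambda)^{-1}\,d\mu(\lambda)$, so $g\in\cP^{(m)}(0,\infty)$ by Theorem \ref{T-5.1}(iii), with representing measure $\mu$. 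Both $f$ and $g$ are analytic by Theorem \ref{T-3.3}, so $h:=f-(-1)^{k-m}g$ is $C^k$ (indeed $C^\infty$); taking $k$th divided differences and using Theorem \ref{T-5.1}(b) for $g$ at order $k$ together with Theorem \ref{T-5.1}(iii) for $f$, the two integral terms cancel and $h^{[k]}(x_1,\dots,x_{k+1})\equiv\gamma$. A $C^k$ function with $h^{[k]}\equiv\gamma$ has $h^{(k)}\equiv k!\,\gamma$, hence is a polynomial of degree $\le k$ with coefficient of $x^k$ equal to $\gamma\ge0$; writing $h(x)=\sum_{l=0}^k a_lx^l$ with $a_k=\gamma$ gives exactly the representation in (i).

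Since the proposition is essentially a corollary of Theorem \ref{T-5.1}, there is no serious obstacle; the one point requiring care is the passage of $x$-derivatives and divided differences inside the integral defining $g$ under the weaker hypothesis \eqref{eq-5.6} in place of \eqref{eq-5.1}, and the matching of polynomial remainder terms — both handled just as in the proof of Theorem \ref{T-5.1}, and indeed the whole argument parallels that of Proposition \ref{P-4.3}.
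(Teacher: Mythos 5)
Your argument is correct, and the direction (i)\,$\Rightarrow$\,(ii) is exactly the paper's: take $k$th divided differences, note that the degree-$\le k$ polynomial contributes $a_k$ and that Theorem \ref{T-5.1}\,(b) applied to $g$ at order $k>m$ gives $(-1)^{k-m}g^{[k]}=\int\prod_i(x_i+\lambda)^{-1}\,d\mu_g$, then invoke (iii)\,$\Rightarrow$\,(i) of that theorem. For (ii)\,$\Rightarrow$\,(i) you use the same auxiliary function $g(x)=\int(x-\alpha)^m\big((x+\lambda)(\alpha+\lambda)^m\big)^{-1}d\mu(\lambda)$ (with $\alpha=1$) as the paper, but you identify the leftover $f-(-1)^{k-m}g$ differently: the paper expands the kernel $(x-\alpha)^k/\big((x+\lambda)(\alpha+\lambda)^k\big)$ by the binomial theorem into $(-1)^{k-m}$ times the $m$-tone kernel plus finitely many terms that are polynomials in $x$ of degree $\le k-1$ with $\mu$-integrable coefficients (integrability being where \eqref{eq-5.6} enters), and reads off the polynomial directly; you instead show $h:=f-(-1)^{k-m}g$ satisfies $h^{[k]}\equiv\gamma$ by cancelling the two integral expressions from Theorem \ref{T-5.1}\,(iii) and (b), and conclude $h$ is a polynomial of degree $\le k$ with leading coefficient $\gamma\ge0$. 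Your route avoids verifying the integrability of each binomial coefficient term separately (it is absorbed into the well-definedness of $g$ and the divided-difference identities), at the cost of needing $h$ to be $C^k$ and the standard fact that a constant $k$th divided difference characterizes polynomials of degree $\le k$ — both of which you justify (analyticity of $f$ and $g$ via Theorem \ref{T-3.3}, and $h^{[k]}(x,\dots,x)=h^{(k)}(x)/k!$). The two arguments are equivalent in substance, and yours parallels more closely the divided-difference mechanism already used in the proof of Proposition \ref{P-4.3}.
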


\begin{proof}
(i)\,$\Rightarrow$\,(ii).\enspace
Assume that $f$ is of the form in (i). Let $\mu$ be the representing measure of $g$, which
satisfies \eqref{eq-5.6}. Then by (b) of Theorem \ref{T-5.1} we have
$$
f^{[k]}(x_1,x_2,\dots,x_{k+1})=a_k+\int_{[0,\infty)}
{1\over(x_1+\lambda)(x_2+\lambda)\cdots(x_{k+1}+\lambda)}\,d\mu(\lambda).
$$
Hence Theorem \ref{T-5.1} implies that $f\in\cP^{(k)}(0,\infty)$ with the coefficient of
the $k$th degree $a_k$ and the representing measure $\mu$.

(ii)\,$\Rightarrow$\,(i).\enspace
Assume that $f$ is represented as \eqref{eq-5.2} with the  measure $\mu$ satisfying
\eqref{eq-5.6}. Similarly to the proof of Proposition \ref{P-4.3}\,(1) we have
\begin{align*}
f(x)&=(\mbox{a polynomial of at most degree $k$})+(-1)^{k-m}g(x)
\end{align*}
with
$$
g(x):=\int_{[0,\infty)}{(x-\alpha)^m\over
(x+\lambda)(\alpha+\lambda)^m}\,d\mu(\lambda)
$$
belonging to $\cP^{(m)}(0,\infty)$.
\end{proof}

The case $m=0$ version of Proposition \ref{P-5.6} can be stated as follows: A function
$f\in\cP^{(k)}(0,\infty)$ with the representing measure $\mu$ is of the form
$f(x)=a_0+(-1)^kg(x)$ with $a_0\ge0$ and a non-negative operator monotone decreasing
function $g$ on $(0,\infty)$ if and only if
$$
\int_{[0,\infty)}{1\over1+\lambda}\,d\mu(\lambda)<+\infty.
$$
The proof is similar to the above. This suggests us to define $\cP^{(0)}(0,\infty)$ as the
set of all non-negative operator monotone decreasing functions on $(0,\infty)$; then
$\cP^{(0)}(0,\infty)\subset-\cP^{(1)}(0,\infty)\subset\cP^{(2)}(0,\infty)$.

\begin{lemma}\label{L-5.6}
Let $k\in\bN$ and $f\in\cP^{(k)}(0,\infty)$. Then $\lim_{x\searrow0}xf(x)$
and $\lim_{x\to\infty}f(x)/x^k$ exist and
$$
\lim_{x\searrow0}xf(x)\in\begin{cases}
(-\infty,0] & \text{if $k$ is odd}, \\
[0,\infty) & \text{if $k$ is even},
\end{cases}\qquad
\lim_{x\to\infty}{f(x)\over x^k}\in[0,\infty).
$$
\end{lemma}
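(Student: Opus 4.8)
The plan is to read both limits directly off the integral representation of Theorem~\ref{T-5.1}. Since $f\in\cP^{(k)}(0,\infty)$, part~(ii) of that theorem supplies, for an arbitrarily fixed $\alpha\in(0,\infty)$, a number $\gamma\ge0$ and a positive measure $\mu$ on $[0,\infty)$ with $\int_{[0,\infty)}(1+\lambda)^{-(k+1)}\,d\mu(\lambda)<\infty$ such that \eqref{eq-5.2} holds. I will handle the two limits separately, in each case decomposing $f$ into (1) the Taylor polynomial $\sum_{l=0}^{k-1}\frac{f^{(l)}(\alpha)}{l!}(x-\alpha)^l$, (2) the monomial term $\gamma(x-\alpha)^k$, and (3) the integral term, and then pass to the limit termwise; since the limits will come out as explicit nonnegative expressions, their existence is part of the conclusion.

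For $\lim_{x\to\infty}f(x)/x^k$: term~(1) has degree $\le k-1$, so it is $o(x^k)$; term~(2) divided by $x^k$ tends to $\gamma$. For term~(3) I would invoke dominated convergence: for $x\ge\max\{1,2\alpha\}$ the integrand of $x^{-k}\int_{[0,\infty)}\frac{(x-\alpha)^k}{(x+\lambda)(\alpha+\lambda)^k}\,d\mu(\lambda)$ is bounded by $\frac{1}{(1+\lambda)(\alpha+\lambda)^k}$, which is $\mu$-integrable because $\bigl((1+\lambda)/(\alpha+\lambda)\bigr)^k$ is bounded on $[0,\infty)$, so this bound is $\le C_\alpha(1+\lambda)^{-(k+1)}$; since the integrand tends to $0$ pointwise in $\lambda$ (the factor $(x+\lambda)^{-1}$ does), the term vanishes in the limit. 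Hence $\lim_{x\to\infty}f(x)/x^k=\gamma\in[0,\infty)$.

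For $\lim_{x\searrow0}xf(x)$: $x$ times term~(1) and $\gamma x(x-\alpha)^k$ tend to $0$, so the whole point is term~(3), where I would isolate the possible atom of $\mu$ at $\lambda=0$. On $\{0\}$ the relevant integrand $\frac{x(x-\alpha)^k}{(x+\lambda)(\alpha+\lambda)^k}$ equals $(x-\alpha)^k/\alpha^k\to(-1)^k$, contributing $(-1)^k\mu(\{0\})$. On $(0,\infty)$ I would again apply dominated convergence after splitting at $\lambda=\alpha$: on $(0,\alpha]$ the integrand is bounded by $1$ (using $|x-\alpha|\le\alpha$ and $x/(x+\lambda)\le1$ for $0<x<\alpha$) and $\mu$ is finite there since $\mu$ is automatically finite on compacts; on $(\alpha,\infty)$ the bounds $x/(x+\lambda)\le x/\lambda$ and $\alpha^k/(\alpha+\lambda)^k\le\alpha^k/\lambda^k$ give integrand $\le x\,\alpha^k\lambda^{-(k+1)}$ with $\lambda^{-(k+1)}$ $\mu$-integrable on $(\alpha,\infty)$; in both regions the integrand tends to $0$ pointwise as $x\searrow0$. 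Therefore $\lim_{x\searrow0}xf(x)=(-1)^k\mu(\{0\})$, which lies in $[0,\infty)$ when $k$ is even and in $(-\infty,0]$ when $k$ is odd.

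The only point requiring a little care --- the ``main obstacle'' --- is the construction of the $\mu$-integrable dominating functions. This reduces to the elementary facts that $(1+\lambda)/(\alpha+\lambda)$ and $\lambda/(\alpha+\lambda)$ are bounded on the ranges where they occur, and that the hypothesis $\int(1+\lambda)^{-(k+1)}\,d\mu<\infty$ forces $\mu$ to be finite on compact subsets of $[0,\infty)$; once these are in place, everything is routine manipulation of the explicit formula \eqref{eq-5.2}.
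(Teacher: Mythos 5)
Your proof is correct. The paper takes a different route: it first records (from the representation of Theorem~\ref{T-1.10}) that for an operator monotone $g$ on $(0,\infty)$ the limits $\lim_{x\searrow0}xg(x)\in(-\infty,0]$ and $\lim_{x\to\infty}g(x)/x\in[0,\infty)$ exist, and then invokes Corollary~\ref{C-3.4} to write $f$ as a polynomial of degree at most $k-2$ plus $(x-1)^{k-1}g(x)$, so that both limits for $f$ reduce to the $k=1$ case after multiplying by the sign $(\pm1)^{k-1}$ coming from the factor $(x-1)^{k-1}$ near $0$ or near $\infty$. You instead work directly from the $k$-tone integral representation \eqref{eq-5.2} of Theorem~\ref{T-5.1} and run dominated convergence on each term; your domination estimates (boundedness of $(1+\lambda)/(\alpha+\lambda)$ and $\lambda/(\alpha+\lambda)$, finiteness of $\mu$ on compacts from \eqref{eq-5.1}) are all valid, and the splitting off of the atom at $\lambda=0$ is handled correctly. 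What your approach buys is an explicit identification of the two limits as $(-1)^k\mu(\{0\})$ and $\gamma$ respectively --- the paper only notes the second identification parenthetically --- at the cost of somewhat more bookkeeping; the paper's reduction to the operator monotone case is shorter but leaves the limit values implicit.
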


\begin{proof}
First, assume that $g$ is an operator monotone function on $(0,\infty)$. From the integral
representation of $g$, it is easy to show that $\lim_{x\searrow0}xg(x)$ and
$\lim_{x\to\infty}g(x)/x$ exist and
\begin{equation}\label{eq-5.7}
\lim_{x\searrow0}xg(x)\in(-\infty,0],\qquad\lim_{x\to\infty}{g(x)\over x}\in[0,\infty),
\end{equation}
see also \cite[Corollary 2.7]{HS}. Next, assume that $f\in\cP^{(k)}(0,\infty)$. By
Corollary \ref{C-3.4} there is an operator monotone function $g$ on $(0,\infty)$ such
that $f$ is a sum of a polynomial of degree less than or equal to $k-2$ and
$(x-1)^{k-1}g(x)$. This together with \eqref{eq-5.7} yields the conclusion. (In fact, by
using the Lebesgue convergence theorem, one can easily see from \eqref{eq-5.2} that
$\gamma=\lim_{x\to\infty}f(x)/x^k$, the coefficient of $k$th degree of $f$.)
\end{proof}

\begin{prop}
Let $k,m\in\bN$ with $k<m$. For a real function $f$ on $(0,\infty)$ the following are
equivalent:
\begin{itemize}
\item[\rm(i)] $f\in\cP^{(k)}(0,\infty)$;
\item[\rm(ii)] $(-1)^{m-k}f\in\cP^{(m)}(0,\infty)$ and
$\lim_{x\to\infty}f(x)/x^k\in[0,\infty)$.
\end{itemize}
\end{prop}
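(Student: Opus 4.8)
The plan is to dispose of (i)$\Rightarrow$(ii) at once and to prove (ii)$\Rightarrow$(i) in two stages: a reduction of general $k$ to the case $k=1$, and the case $k=1$ itself by induction on $m$. For (i)$\Rightarrow$(ii), Theorem~\ref{T-5.1}\,(a) gives $(-1)^{m-k}f\in\cP^{(m)}(0,\infty)$ and Lemma~\ref{L-5.6} gives $\lim_{x\to\infty}f(x)/x^k\in[0,\infty)$; nothing more is needed.

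For (ii)$\Rightarrow$(i) when $k\ge2$: put $h:=(-1)^{m-k}f\in\cP^{(m)}(0,\infty)$ (analytic by Theorem~\ref{T-3.3}) and $c:=\lim_{x\to\infty}f(x)/x^k\ge0$. By Theorem~\ref{T-3.3}\,(v) it suffices to prove that $v:=f^{[k-1]}(x,\underbrace{1,\dots,1}_{k-1})$ is operator monotone on $(0,\infty)$. Write $u:=h^{[k-1]}(x,1,\dots,1)$, so $v=(-1)^{m-k}u$. From the divided-difference identity $u^{[j]}(y_0,\dots,y_j)=h^{[k-1+j]}(y_0,\dots,y_j,1,\dots,1)$ with $j=m-k$, together with Theorem~\ref{T-3.3}\,(v) applied to $h$ (which makes $h^{[m-1]}(x,1,\dots,1)$ operator monotone), one obtains that $u^{[m-k]}(x,1,\dots,1)$ is operator monotone, hence $u\in\cP^{(m-k+1)}(0,\infty)$ again by Theorem~\ref{T-3.3}\,(v); and since $u(x)=(x-1)^{1-k}\bigl\{h(x)-\sum_{l=0}^{k-2}(h^{(l)}(1)/l!)(x-1)^l\bigr\}$, we get $\lim_{x\to\infty}u(x)/x=(-1)^{m-k}c$, so $\lim_{x\to\infty}v(x)/x=c\ge0$. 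As $m-k+1<m$, the case $k=1$ of the proposition with exponent $m-k+1$ (established below) applies to $v$ — its hypothesis (ii) reads $u\in\cP^{(m-k+1)}(0,\infty)$ and $\lim_{x\to\infty}v(x)/x\ge0$, both of which hold — and yields that $v$ is operator monotone, as required.

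Now the case $k=1$, by induction on $m\ge2$. Put $h:=(-1)^{m-1}f\in\cP^{(m)}(0,\infty)$ and $c:=\lim_{x\to\infty}f(x)/x\ge0$, and let $u:=h^{[1]}(x,1)$. Since $u^{[m-2]}(x,1,\dots,1)=h^{[m-1]}(x,1,\dots,1)$ is operator monotone (Theorem~\ref{T-3.3}\,(v) for $h\in\cP^{(m)}$), Theorem~\ref{T-3.3}\,(v) gives $u\in\cP^{(m-1)}(0,\infty)$, and $u(x)=(h(x)-h(1))/(x-1)$ gives $\lim_{x\to\infty}u(x)=(-1)^{m-1}c$, finite. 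Consequently $(-1)^{m-2}u$ is operator monotone on $(0,\infty)$: trivially if $m=2$ (it then coincides with $u\in\cP^{(1)}(0,\infty)$), and by the induction hypothesis — the case $(1,m-1)$, applied to $(-1)^{m-2}u$, whose hypothesis (ii) is $u\in\cP^{(m-1)}(0,\infty)$ and $\lim_{x\to\infty}(-1)^{m-2}u(x)/x=0\ge0$ — if $m\ge3$. Write $u=(-1)^{m-2}w$ with $w$ operator monotone on $(0,\infty)$; then $\lim_{x\to\infty}w(x)=(-1)^{m-2}(-1)^{m-1}c=-c$ is finite, so by Theorem~\ref{T-1.10} the linear coefficient of $w$ vanishes and $\int_{[0,\infty)}(1+\lambda)^{-1}\,d\mu_w(\lambda)<\infty$, whence $w(x)=-c-\int_{[0,\infty)}(x+\lambda)^{-1}\,d\mu_w(\lambda)$. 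Since $h(x)=h(1)+(x-1)u(x)$ and $(-1)^{m-1}(x-1)u(x)=-(x-1)w(x)$,
$$
f=(-1)^{m-1}h(1)-(x-1)w(x)=(-1)^{m-1}h(1)+c(x-1)+\int_{[0,\infty)}\frac{x-1}{x+\lambda}\,d\mu_w(\lambda),
$$
which is operator monotone on $(0,\infty)$: the first two terms form an affine function of non-negative slope, and the last term is the average, against a positive measure of finite $\int(1+\lambda)^{-1}$-mass, of the operator monotone functions $x\mapsto(x-1)/(x+\lambda)=1-(1+\lambda)/(x+\lambda)$. This closes the induction.

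The analytic content is modest — only Theorem~\ref{T-1.10}, the observation that a bounded operator monotone function on $(0,\infty)$ has vanishing linear part and a representing measure of finite $\int(1+\lambda)^{-1}$-mass, and the stability of operator monotonicity under positive-measure averages. The point requiring care is the sign bookkeeping: at each reduction step one must verify that the auxiliary function ($v$, or $(-1)^{m-2}u$, or $w$) satisfies hypothesis (ii) with exactly the right sign in front and with a \emph{non-negative} limit at infinity; coupled with this is the identification $h^{[k-1]}(x,1,\dots,1)\in\cP^{(m-k+1)}(0,\infty)$ via the divided-difference identities and Theorem~\ref{T-3.3}\,(v). I expect the sign/indexing bookkeeping, rather than any genuine analytic difficulty, to be the main obstacle.
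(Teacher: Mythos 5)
Your argument is correct, but it follows a genuinely different route from the paper's. The paper proves (ii)$\Rightarrow$(i) by iterating a single one-step lemma $(*)$: if $-f\in\cP^{(m)}(0,\infty)$ and $\lim_{x\to\infty}f(x)/x^{m-1}\in[0,\infty)$ then $f\in\cP^{(m-1)}(0,\infty)$. That lemma is proved entirely inside the integral representation \eqref{eq-5.2} of Theorem \ref{T-5.1}: the growth hypothesis forces the leading coefficient $\gamma_0$ to vanish and forces $\int_{[0,\infty)}(\alpha+\lambda)^{-m}\,d\mu<\infty$, and the kernel identity $\frac{(x-\alpha)^m}{(x+\lambda)(\alpha+\lambda)^m}=\frac{(x-\alpha)^{m-1}}{(\alpha+\lambda)^m}-\frac{(x-\alpha)^{m-1}}{(x+\lambda)(\alpha+\lambda)^{m-1}}$ then rewrites $f$ in the $\cP^{(m-1)}$ form with the \emph{same} representing measure and new coefficient $\gamma_1\ge0$. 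You instead first collapse the order $k$ down to $1$ by passing to $f^{[k-1]}(x,1,\dots,1)$ via Theorem \ref{T-3.3}\,(v) and the identity $u^{[j]}(y_0,\dots,y_j)=h^{[k-1+j]}(y_0,\dots,y_j,1,\dots,1)$, and then run an induction on $m$ for $k=1$ whose only analytic input is Theorem \ref{T-1.10} (the classical L\"owner representation), plus the observation that a finite limit at $+\infty$ kills the linear term and makes $\int(1+\lambda)^{-1}d\mu_w$ finite. Your sign and index bookkeeping checks out at every stage (in particular the hypotheses of the case $(1,m-k+1)$ applied to $v$ and of the case $(1,m-1)$ applied to $(-1)^{m-2}u$ are verified correctly, and there is no circularity since the $k=1$ case is established independently). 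What the paper's route buys is uniformity and an explicit dividend: one sees directly that $f$ and $(-1)^{m-k}f$ share the same representing measure and how the polynomial coefficients transform. What your route buys is economy of machinery: you never need the full Theorem \ref{T-5.1} in the backward direction, only Theorem \ref{T-3.3}\,(v) and Theorem \ref{T-1.10}. One cosmetic point: your closing summary speaks of a ``bounded'' operator monotone function, whereas what you actually use (and all you need) is finiteness of the limit at $+\infty$; an operator monotone function with finite limit at $+\infty$ need not be bounded below near $0$.
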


\begin{proof}
(i)\,$\Rightarrow$\,(ii).\enspace
Assume that $f\in\cP^{(k)}(0,\infty)$, and let $\gamma\ge0$ be the coefficient of the
$k$th degree and $\mu$ the representing measure of $f$. Then
$(-1)^{m-k}f\in\cP^{(m)}(0,\infty)$ by Proposition \ref{P-5.2}, and Lemma \ref{L-5.6}
implies that $\lim_{x\to\infty}f(x)/x^k\in[0,\infty)$.

(ii)\,$\Rightarrow$\,(i).\enspace
It suffices to prove the case where $k=m-1$, that is,
\begin{itemize}
\item[($*$)] if $-f\in\cP^{(m)}(0,\infty)$ and $\lim_{x\to\infty}f(x)/x^{m-1}\in[0,\infty)$, then
$f\in\cP^{(m-1)}(0,\infty)$.
\end{itemize}
Indeed, assume that (ii) holds for $k<m$. Since
$\lim_{x\to\infty}(-1)^{m-k-1}f(x)/x^{m-1}\in[0,\infty)$ (in fact,
$\lim_{x\to\infty}f(x)/x^{m-1}=0$ if $k<m-1$), we apply $(*)$ to have
$(-1)^{m-k-1}f\in\cP^{(m-1)}(0,\infty)$. If $k<m-1$, then we apply $(*)$ again to have
$(-1)^{m-k-2}f\in\cP^{(m-2)}(0,\infty)$. Repeating this procedure yields that
$f\in\cP^{(k)}(0,\infty)$.

To prove $(*)$, assume that $-f\in\cP^{(m)}(0,\infty)$ and
$\lim_{x\to\infty}f(x)/x^{m-1}\in[0,\infty)$. Let $\gamma_0\ge0$ and $\mu$ be the
coefficient of the $m$th degree and the representing measure of $-f$, respectively. With
$\alpha\in(0,\infty)$ we have
$$
f(x)=\sum_{l=0}^{m-1}{f^{(l)}(\alpha)\over l!}\,(x-\alpha)^l-\gamma_0(x-\alpha)^m
-\int_{[0,\infty)}{(x-\alpha)^m\over(x+\lambda)(\alpha+\lambda)^m}\,d\mu(\lambda),
\quad x\in(0,\infty).
$$
Since
\begin{align*}
\lim_{x\to\infty}{1\over x^{m-1}}\int_{[0,\infty)}
{(x-\alpha)^m\over(x+\lambda)(\alpha+\lambda)^m}\,d\mu(\lambda)
&=\int_{[0,\infty)}{1\over(\alpha+\lambda)^m}\,d\mu(\lambda)
\end{align*}
by the monotone convergence theorem, the assumption
$\lim_{x\to\infty}f(x)/x^{m-1}\in[0,\infty)$ implies that $\gamma_0=0$ and
\begin{equation}\label{eq-5.8}
\gamma_1:={f^{(m-1)}(\alpha)\over(m-1)!}
-\int_{[0,\infty)}{1\over(\alpha+\lambda)^m}\,d\mu(\lambda)\in[0,\infty)
\end{equation}
so that
\begin{equation}\label{eq-5.9}
\int_{[0,\infty)}{1\over(\alpha+\lambda)^m}\,d\mu(\lambda)<+\infty.
\end{equation}
Since
$$
{(x-\alpha)^m\over(x+\lambda)(\alpha+\lambda)^m}
={(x-\alpha)^{m-1}\over(\alpha+\lambda)^m}
-{(x-\alpha)^{m-1}\over(x+\lambda)(\alpha+\lambda)^{m-1}},
$$
one can write
\begin{align*}
f(x)&=\sum_{l=0}^{m-2}{f^{(l)}(\alpha)\over l!}\,(x-\alpha)^l+\gamma_1(x-\alpha)^{m-1}
+\int_{[0,\infty)}{(x-\alpha)^{m-1}\over(x+\lambda)(\alpha+\lambda)^{m-1}}
\,d\mu(\lambda).
\end{align*}
Thanks to \eqref{eq-5.8} and \eqref{eq-5.9} this yields that
$f\in\cP^{(m-1)}(0,\infty)$.
\end{proof}

By using Proposition \ref{P-5.2} and Lemma \ref{L-5.6}, it is not difficult to
prove the following chacterizations of operator absolutely/completely monotone
functions on $(0,\infty)$ (introduced in the previous section).

\begin{prop}
Let $f$ be a smooth real function on $(0,\infty)$. Then $f$ is operator absolutely monotone
if and only if $f(x)=\alpha x+\beta$ with $\alpha,\beta\ge0$. Also, $f$ is operator
completely monotone if and only if $f$ is a non-negative operator monotone decreasing
function on $(0,\infty)$.
\end{prop}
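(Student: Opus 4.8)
The plan is to deduce both equivalences from structural results already in hand: the chain $\cP^{(0)}(0,\infty)\subset-\cP^{(1)}(0,\infty)\subset\cP^{(2)}(0,\infty)$ recorded after Proposition \ref{P-5.6}, the inclusions $\cP^{(k)}(0,\infty)\subset\cP^{(k+2)}(0,\infty)$ from Corollary \ref{C-3.5}\,(b), and the equivalence of the derivative positivity condition with operator $k$-tonicity in Theorem \ref{T-3.3}. Apart from one reduction to scalar matrices, the whole argument is just bookkeeping of the signs $(-1)^k$ occurring in \eqref{eq-4.5} and \eqref{eq-4.6}, so I do not expect a genuine obstacle.

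For the absolutely monotone case I would argue as follows. If $f(x)=\alpha x+\beta$ with $\alpha,\beta\ge0$, then $f(A+tX)=(\alpha A+\beta I)+t\,\alpha X$, so the $0$th derivative equals $\alpha A+\beta I\ge0$ (since $A>0$), the first derivative equals $\alpha X\ge0$, and all higher derivatives vanish; hence \eqref{eq-4.5} holds and $f$ is operator absolutely monotone. Conversely, taking $k=1$ and $k=2$ in \eqref{eq-4.5} and invoking (iii)\,$\Rightarrow$\,(i) of Theorem \ref{T-3.3} (recall $f$ is smooth) gives $f\in\cP^{(1)}(0,\infty)\cap\cP^{(2)}(0,\infty)$; since $\cP^{(1)}(0,\infty)\subset-\cP^{(2)}(0,\infty)$ by Proposition \ref{P-5.2}, $f$ is simultaneously operator convex and operator concave on $(0,\infty)$. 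Testing these inequalities on scalar matrices makes $f$ both convex and concave as a numerical function, so $f''\equiv0$ and $f(x)=\alpha x+\beta$; operator monotonicity forces $\alpha\ge0$, and the case $k=0$ of \eqref{eq-4.5} gives $f(x)\ge0$ for all $x>0$, whence $\beta=\lim_{x\searrow0}f(x)\ge0$.

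For the completely monotone case, suppose first that $f$ is a non-negative operator monotone decreasing function on $(0,\infty)$, meaning $f\ge0$ and $-f$ is operator monotone, i.e. $f\in\cP^{(0)}(0,\infty)$. Then $-f\in\cP^{(1)}(0,\infty)$ and $f\in\cP^{(2)}(0,\infty)$ by the inclusions recalled above, so Corollary \ref{C-3.5}\,(b) yields that $-f$ is operator $(2l+1)$-tone for every $l\ge0$ and $f$ is operator $(2l+2)$-tone for every $l\ge0$. Passing back through (i)\,$\Rightarrow$\,(iii) of Theorem \ref{T-3.3}, this says precisely that $(-1)^k\,\frac{d^k}{dt^k}f(A+tX)\big|_{t=0}\ge0$ for every odd $k$ (coming from $-f$) and every even $k\ge2$ (coming from $f$), while $k=0$ is just $f(A)\ge0$; hence \eqref{eq-4.6} holds and $f$ is operator completely monotone. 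Conversely, if $f$ is operator completely monotone, then $k=0$ in \eqref{eq-4.6} gives $f\ge0$ on $(0,\infty)$, and $k=1$ says that $\frac{d}{dt}(-f)(A+tX)\big|_{t=0}\ge0$ for all $A\in\bM_n^{sa}(0,\infty)$, $X\in\bM_n^+$ and all $n$; since the smooth function $-f$ is in particular $C^1$, Theorem \ref{T-3.3} makes $-f$ operator $1$-tone, that is, operator monotone, so $f$ is a non-negative operator monotone decreasing function. The only point requiring a little care is the reduction to $1\times1$ matrices in the absolutely monotone half, which is harmless here precisely because operator convexity and concavity are quantified over all matrix sizes.
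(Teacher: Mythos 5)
Your proof is correct and follows essentially the same route as the paper: both directions rest on identifying operator absolute monotonicity with membership in $\cP^{(1)}(0,\infty)\cap\cP^{(2)}(0,\infty)$ together with the inclusion $\cP^{(1)}(0,\infty)\subset-\cP^{(2)}(0,\infty)$, and operator complete monotonicity with the alternating chain $f\in(-1)^k\cP^{(k)}(0,\infty)$ from Proposition \ref{P-5.2} (equivalently, Corollary \ref{C-3.5}\,(b)). The only cosmetic difference is that you deduce affinity directly from simultaneous scalar convexity and concavity, whereas the paper passes through ``polynomial of degree at most $2$'' and Lemma \ref{L-5.6}; both are valid.
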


\section{Examples}\label{sec-6}

In this section we present several examples of operator $k$-tone functions on $(0,\infty)$.
First, recall a convenient way to obtain operator $k$-tone functions on $(0,\infty)$. Let
$g$ be an operator monotone function on $(0,\infty)$, $k,m\in\bN$ with $m<k$, and let
$\alpha_1,\dots,\alpha_m\in(0,\infty)$. By Corollary \ref{C-3.4} and
Proposition \ref{P-5.2} we obtain
\begin{equation}\label{eq-6.1}
(-1)^{k-m-1}\Biggl\{\prod_{l=1}^m(x-\alpha_l)\Biggr\}g(x)\in\cP^{(k)}(0,\infty).
\end{equation}
Furthermore, by taking the limit as $\alpha_l\searrow0$, \eqref{eq-6.1} is valid for any
$\alpha_1,\dots,\alpha_m\in[0,\infty)$ with any $m<k$.

\begin{example}\rm
Note that $-x^r$ and $x^r(x-\alpha)$ are operator monotone on $(0,\infty)$ for every
$r\in[-1,0]$ and $\alpha\in[0,\infty)$. For each $k\in\bN$ and every
$\alpha_1,\dots,\alpha_k\in[0,\infty)$, by \eqref{eq-6.1} we have
$$
(-1)^{k-m}x^r\prod_{l=1}^m(x-\alpha_l)\in\cP^{(k)}(0,\infty)
$$
if $r\in[-1,0]$ and $m=0,1,\dots,k$.

Concerning the power functions $\pm x^p$ on $(0,\infty)$ with $p\in\bR$, let us prove
that, for each $k\in\bN$, $x^p\in\cP^{(k)}(0,\infty)$ if and only if
\begin{equation}\label{eq-6.2}
\begin{cases}
p\in[0,1]\cup[2,3]\cup\dots\cup[k-1,k] & \text{if $k$ is odd}, \\
p\in[-1,0]\cup[1,2]\cup\dots\cup[k-1,k] & \text{if $k$ is even},
\end{cases}
\end{equation}
and $-x^p\in\cP^{(k)}(0,\infty)$ if and only if
\begin{equation}\label{eq-6.3}
\begin{cases}
p\in[-1,0]\cup[1,2]\cup\dots\cup[k-2,k-1] & \text{if $k$ is odd}, \\
p\in[0,1]\cup[2,3]\cup\dots\cup[k-2,k-1] & \text{if $k$ is even}.
\end{cases}
\end{equation}
We need to prove the ``only if" parts.
By Lemma \ref{L-5.6} note that $p\in[-1,k]$ is
necessary for $\pm x^p$ to belong to $\cP^{(k)}(0,\infty)$. Since
$$
{d^k\over dx^k}\,x^p=p(p-1)\cdots(p-k+1)x^{p-k},\qquad x\in(0,\infty),
$$
the condition $p(p-1)\cdots(p-k+1)\ge0$ (resp., $p(p-1)\cdots(p-k+1)\le0$) is necessary
for $x^p$ (resp., $-x^p)$ to belong to $\cP^{(k)}(0,\infty)$.
Therefore, \eqref{eq-6.2} is
necessary for $x^p$ to belong to $\cP^{(k)}(0,\infty)$, and \eqref{eq-6.3} is necessary for
$-x^p$ to belong to $\cP^{(k)}(0,\infty)$.
\end{example}

\begin{example}\rm
Since $\log x$ is operator monotone on $(0,\infty)$, for each $k\in\bN$ and every
$\alpha_1,\dots,\alpha_{k-1}\in[0,\infty)$, by \eqref{eq-6.1} we have
$$
(-1)^{k-m-1}\Biggl\{\prod_{l=1}^m(x-\alpha_i)\Biggr\}\log x\in\cP^{(k)}(0,\infty),
\qquad m=0,1,\dots,k-1.
$$

Concerning the functions $\pm x^p\log x$ on $(0,\infty)$ with $p\in\bR$, we prove that, for
each $k\in\bN$, $x^p\log x\in\cP^{(k)}(0,\infty)$ if and only if
$$
\begin{cases}
p\in\{0,2,\dots,k-1\} & \text{if $k$ is odd}, \\
p\in\{1,3,\dots,k-1\} & \text{if $k$ is even},
\end{cases}
$$
and $-x^p\log x\in\cP^{(k)}(0,\infty)$ if and only if
$$
\begin{cases}
p\in\{1,3,\dots,k-2\} & \text{if $k$ is odd (empty if $k=1$)}, \\
p\in\{0,2,\dots,k-2\} & \text{if $k$ is even}.
\end{cases}
$$
It is enough to prove the ``only if" parts. By induction one can compute
$$
{d^k\over dx^k}\bigl(x^p\log x\bigr)=x^{p-k}\{p(p-1)\cdots(p-k+1)\log x+Q_k(p)\},
\qquad k=1,2,\dots,
$$
where $Q_1(p):=1$ and
$$
Q_k(p):=\sum_{i=0}^{k-1}p(p-1)\cdots(p-i-1)(p-i+1)\cdots(p-k+1),\qquad k\ge2.
$$
Hence, if $p\not\in\{0,1,\dots,k-1\}$, then ${d^k\over dx^k}\bigl(x^p\log x\bigr)$ takes
both positive and negative values on $(0,\infty)$, so neither $x^p\log x$ nor $-x^p\log x$
belongs to $\cP^{(k)}(0,\infty)$. Moreover, if both $\pm x^p\log x$ belong to
$\cP^{(k)}(0,\infty)$, then $x^p\log x$ must be a polynomial of at most degree $k-1$, which
is impossible. Combining these facts shows the assertions.
\end{example}

\begin{example}\rm
Note that $-1/(x+1)$ and $x/(x+1)$ are operator monotone on $(0,\infty)$. By \eqref{eq-6.1},
for each $k\in\bN$ and every $\alpha_1,\dots,\alpha_k\in[0,\infty)$,
$$
{(-1)^{k-m}\over x+1}\,\prod_{l=1}^m(x-\alpha_l)\in\cP^{(k)}(0,\infty),
\qquad m=0,1,\dots,k.
$$

Concerning the functions $\pm x^p/(x+1)$ on $(0,\infty)$ with $p\in\bR$, let us show that,
for each $k\in\bN$, $x^p/(x+1)\in\cP^{(k)}(0,\infty)$ if and only if
$$
\begin{cases}
p\in\{1,3,\dots,k\} & \text{if $k$ is odd}, \\
p\in\{0,2,\dots,k\} & \text{if $k$ is even},
\end{cases}
$$
and $-x^p/(x+1)\in\cP^{(k)}(0,\infty)$ if and only if
$$
\begin{cases}
p\in\{0,2,\dots,k-1\} & \text{if $k$ is odd}, \\
p\in\{1,3,\dots,k-1\} & \text{if $k$ is even}.
\end{cases}
$$
By induction one can compute the $k$th derivative
$$
{d^k\over dx^k}\bigl(x^p/(x+1)\bigr)=x^{p-k}(x+1)^{-(k+1)}Q_k(x),
\qquad k=1,2,\dots,
$$
where $Q_k(x)$ is a polynomial given as
\begin{align*}
Q_k(x)&=(p-1)(p-2)\cdots(p-k)x^k \\
&\qquad+\alpha_{k-1}^{(k)}x^{k-1}+\dots+\alpha_1^{(k)}x+p(p-1)\cdots(p-k+1),
\end{align*}
that is, $Q_k(x)$ is a polynomial of at most degree $k$ with the coefficient
$(p-1)(p-2)\cdots(p-k)$ of $x^k$ and the constant term $p(p-1)\cdots(p-k+1)$.
If $x^p/(x+1)\in\cP^{(k)}(0,\infty)$, then $Q_k(x)\ge0$ for all  $x\in(0,\infty)$ and
we must have
$$
(p-1)(p-2)\cdots(p-k)\ge0,\qquad p(p-1)\cdots(p-k+1)\ge0.
$$
These imply that
$$
\begin{cases}
p\in\{1,2,3,\dots,k-1\}\cup[k,\infty) & \text{if $k$ is odd}, \\
p\in(-\infty,0]\cup\{1,2,3,\dots,k-1\}\cup[k,\infty) & \text{if $k$ is even}.
\end{cases}
$$
On the other hand, if $-x^p/(x+1)\in\cP^{(k)}(0,\infty)$, then we must similarly have
$$
\begin{cases}
p\in(-\infty,0]\cup\{1,2,3,\dots,k-1\} & \text{if $k$ is odd}, \\
p\in\{1,2,3,\dots,k-1\} & \text{if $k$ is even}.
\end{cases}
$$
Here $\pm x^p/(x+1)$ cannot belong to $\cP^{(k)}(0,\infty)$ at the same time. Hence
it remains to show that $x^p/(x+1)$ does not belong to $\cP^{(k)}(0,\infty)$ if
$p\in(k,\infty)$ and that $\pm x^p/(x+1)$ do not belong to $\in\cP^{(k)}(0,\infty)$ if
$p\in(-\infty,0)$. In the case $k=1$, these can be shown by appealing to the analytic
continuation property (as Pick functions) of operator monotone functions. Then one can use
an induction argument based on a characterization of $f\in\cP^{(k)}(0,\infty)$ in terms of
$f^{[k-1]}(x,\alpha,\dots,\alpha)$ given in Theorem \ref{T-3.3} while the details are
omitted.
\end{example}

\begin{example}\rm
It is well known that $(x-1)/\log x$ is operator monotone on $(0,\infty)$. By
\eqref{eq-6.1}, for each $k\in\bN$ and every $\alpha_1,\dots,\alpha_{k-1}\in[0,\infty)$,
$$
(-1)^{k-m-1}\Biggl\{\prod_{l=1}^m(x-\alpha_l)\Biggr\}\,{x-1\over\log x}
\in\cP^{(k)}(0,\infty),\qquad m=0,1,\dots,k-1.
$$

Concerning the functions $\pm x^p(1-x)/\log x$ on $(0,\infty)$ with $p\in\bR$, we notice
that, for each $k\in\bN$, $x^p(1-x)/\log x\in\cP^{(k)}(0,\infty)$ if and only if
$$
\begin{cases}
p\in\{0,2,\dots,k-1\} & \text{if $k$ is odd}, \\
p\in\{1,3,\dots,k-1\} & \text{if $k$ is even},
\end{cases}
$$
and $-x^p/\log x\in\cP^{(k)}(0,\infty)$ if and only if
$$
\begin{cases}
p\in\{1,3,\dots,k-2\} & \text{if $k$ is odd (empty if $k=1$)}, \\
p\in\{0,2,\dots,k-2\} & \text{if $k$ is even}.
\end{cases}
$$
The proof of these assertions is similar to those of the above examples.
We omit the details.
\end{example}

\section*{Acknowledgments}

U.F.\ and E.R. gratefully acknowledge support by the ANR project OSQPI (ANR-11-BS01-0008).
The work of F.H. was partially supported by Grant-in-Aid for Scientific Research
(C)21540208. This work was carried out when F.H.\ was a CNRS research fellow
in 2010-2011 at the Dept.\ of Mathematics of Universit\'e de Franche-Comt\'e.

\end{document}